\documentclass[reqno]{amsart}

\usepackage{geometry}
\usepackage{enumerate}

\usepackage{amsmath}
\usepackage{amsfonts}
\usepackage{amssymb}

\usepackage{tikz}
\usetikzlibrary{positioning,arrows}

\newcommand{\R}{\mathbb R}
\newcommand{\C}{\mathbb C}

\newcommand{\Z}{\mathbb Z}
\newcommand{\Sp}{\mathbb S}

\newcommand{\eps}{\varepsilon}
\newcommand{\epss}{{\textstyle\frac{\varepsilon}{2}}}
\newcommand{\epsss}{\textstyle\frac{\varepsilon}{4}}
\newcommand{\halb}{\frac{1}{2}}
\newcommand{\Or}{{\mathcal O}}
\newcommand{\rep}{\operatorname{Re}}

\newcommand{\cal}{\mathcal}
\newcommand{\F}{{\mathfrak F}}
\newcommand{\No}{{\mathfrak N}}
\newcommand{\X}{{\mathfrak X}}
\newcommand{\Lo}{{\mathcal L}}

\newcommand{\sphere}{\mathbb{S}^2}
\renewcommand{\Re}{\operatorname{Re}}
\renewcommand{\Im}{\operatorname{Im}}



\usepackage{pict2e}

\DeclareRobustCommand{\ddiamond}{%
  \begingroup
  \setlength{\unitlength}{\fontcharht\font`T}%
  \begin{picture}(1,1)
  \polygon(.5,0)(1,.5)(.5,1)(0,.5)
  \polygon*(.5,0.2)(.8,.5)(.5,.8)(.2,.5)
  \end{picture}%
  \endgroup
}

\newtheorem{lemma}{Lemma}

\newtheorem{thm}{Theorem}

\newtheorem{dfn}{Definition}
\newtheorem{rmk}{Remark}

\title{Solution of the Bj\"orling problem by discrete approximation}

\author{Ulrike B\"ucking}
\author{Daniel Matthes}

\begin{document}

\begin{abstract}
  The Bj\"orling problem amounts to the construction of a minimal surface
  from a real analytic curve with a given normal vector field. 
  We approximate that solution locally by discrete minimal surfaces in  the 
spirit of~\cite{book}. 
  The main step in our construction is the approximation of the sought  
surface's Weierstra{\ss} data   by discrete conformal maps.
  We prove that the approximation error is of the order of the square of the 
mesh size.
\end{abstract}

\maketitle

\section{Introduction}
Minimal surfaces are classical objects in differential geometry which have been and still are studied in various aspects. For an (explicit) construction of minimal surfaces, several approaches are known. One possibility is to start with a real-analytic curve $\F_0:[a,b]\to\R^3$ with 
$\F_0'(t)\not=0$ 
for all $t\in[a,b]$ and a real-analytic vector field $\No_0:[a,b]\to\Sp^2$ with 
$\langle \No_0(t), \F_0'(t)\rangle =0$ for all $t\in[a,b]$. Assume that the maps $\No_0$ and $\F_0$ admit holomorphic extensions. The task of finding a minimal surface passing through the curve $\F_0$ and with 
given normals $\No_0$ along this curve is called {\em Bj\"orling problem for 
minimal surfaces}. It was proposed and solved by E.G.~Bj\"orling in~1844~\cite{B1844}.

Bj\"orling-type problems are now also known and solved for other classical surface classes like CMC-surfaces or for minimal surfaces in other space forms, like Lorentz-Minkowski space, see for example~\cite{ACM03,AV06,BD10,CDM11,Yang17,DFM17,BW18}. There is recent interest in using them for construction of special minimal surfaces, see~\cite{BO16, cho2023}. Also, Bj\"orling-type problems may be connected to other concepts as in~\cite{AM20}.

In this article, we are interested in solving Bj\"orling's problem locally via an explicit construction of discrete minimal surfaces as defined in~\cite{book}, see also \cite[Chapter~4.5]{BS08}. This definition relies on a discrete Weierstrass representation formula and thus on a discrete holomorphic function.
Therefore, the main task in our approach is to choose suitable data from the given real-analytic functions in order to determine initial values from which the discrete holomorphic function and eventually the discrete minimal surface are obtained.
We restrict ourselves to local considerations and to the generic case, that is, the given curve is nowhere tangent to a curvature line of the minimal surface. Our main focus is a suitable construction process from the given data which guarantees existence and convergence of the discrete minimal surfaces. In other words, we show how to extract data from the given real-analytic curves such that the corresponding discrete minimal surfaces locally approximate the unique smooth minimal surface solving the given Bj\"orling problem.

For our approach, we do not use the explicit formula for the smooth solution given by H.A.~Schwarz in~1890~\cite{Schwarz1890}. Instead, our construction is based on a special Weierstrass formula using a conformal curvature line parametrization, see for example~\cite{DHKW92}, whose main ingredient is the stereographic projection of the suitably parametrized Gauss map. In order to determine this holomorphic function in our setting, we first need a suitable reparametrization of the given functions $\F_0$ and $\No_0$ (see Section~\ref{secPhi}).  

We present two possibilities how to obtain corresponding discrete holomorphic functions $G_{m,n}$ (or more generally $G^\eps$). We make use of the notion of discrete holomorphicity based on the preservation the cross-ratios of an underlying rectangular lattice.  Our construction procedure is detailed in Sections~\ref{secConstruction} and~\ref{secConst2}. These discrete maps then define discrete minimal surfaces thanks to the discrete Weierstrass representation.

The main part of the paper is concerned with the proof that the discrete holomorphic functions $G^\eps$ obtained from our construction approximate the smooth holomorphic function $g$ with all its derivatives. Our proof relies on the convergence of suitable auxiliary discrete functions $F^\eps$ to a corresponding smooth function $f$. Their discrete and smooth evolution equations are derived in Section~\ref{secCR}. After proving convergence for these auxiliary functions in Section~\ref{secConvF}, we deduce the desired convergence of the discrete holomorphic functions $G^\eps$ in Sections~\ref{secConvMinimal}. This finally shows our approximation result for the discrete minimal surfaces, see Sections~\ref{secConstruction} and~\ref{secConst2}.

\section{From Bj\"orling data to Cauchy data for Weierstrass representation}\label{secPara}

In the following, we start from the data for the classical Bj\"orling problem and its solution in the well-known integral representation by H.A.~Schwarz. This solution may (locally) be rewritten in form of a Weierstrass representation: away from umbilic points, there exists a conformal curvature line parametrization which is determined by a holomorphic function. Our ultimate goal is to locally reformulate the given Bj\"orling problem as a corresponding Cauchy problem for a suitable discrete analogon of this holomorphic function.

\subsection{Representation formulas}\label{secRep}
For the classical {\em Bj\"orling problem for 
minimal surfaces} one assumes given a real-analytic curve $\F_0:[a,b]\to\R^3$ with derivative $\dot\F_0(t)\not=0$ 
for all $t\in[a,b]$ and a real-analytic normal vector field $\No_0:[a,b]\to\Sp^2$ with 
$\langle \No_0(t), \dot\F_0(t)\rangle =0$ for all $t\in[a,b]$. Moreover, the maps $\No_0$ and $\F_0$ admit holomorphic extensions. This data will be referred to as {\it Bj\"orling data}.

The task is to find a minimal surface passing through the curve $\F_0$ and with given normals $\No_0$.  Note that there always exists a local solution to the Bj\"orling problem.
In~1890, H.A.~Schwarz gave an explicit formula for the Weierstrass data of the solution, see for example~\cite{DHKW92}.
Denote by $\omega=t+i\eta$ a complex coordinate and by $\F_0'(\omega)$ the holomorphic extension of $\dot\F_0(t)$. Then
\[\X(t,\eta)=\rep\left( \F_0(\omega)-i\int_{a}^\omega \No_0(\omega)\times \F_0'(\omega)d\omega \right)\]
is a minimal surface $\X:D\to\R^3$ with normal vector field 
$\No:D\to\Sp^2$, both defined on some open domain $D\subset\C$ containing $[a,b]$, such that $\X(t,0)=\F_0(t)$ and $\No(t,0)=\No_0(t)$.

On the other hand, away from umbilic points every minimal surface can be locally parametrized by conformal curvature lines in the following form of a Weierstrass representation $\F:\Omega\to\R^3$ with
\begin{align}\label{eq:Weierstrass}
  \F_u=\Re\left[\frac1{g'}\rho(g)\right],
  \quad
  \F_v=-\Im\left[\frac1{g'}\rho(g)\right],
  \quad
  \No = \sigma(g),
\end{align}
where $g:\Omega\to\C$ is holomorphic on some open domain $\Omega\subset\C$,
and for $z\in\C$ we define
\begin{align}\label{eq:defsigma}
  \sigma(z) = \frac1{1+|z|^2}
  \begin{pmatrix}
    2\Re z \\ 2\Im z \\ |z|^2-1
  \end{pmatrix}
  \qquad \text{and}\qquad
  \rho(z) = 
  \begin{pmatrix}
    1-z^2 \\ i(1+z^2) \\ 2z
  \end{pmatrix} .
\end{align}

With the help of a suitable reparametrization, the minimal surface $\X$, which is the solution of the Björling problem, can of course locally be written in form of the Weierstrass representation~\eqref{eq:Weierstrass} (away from umbilic points).
The Bj\"orling data is attained in the sense that
there is a map $\phi:[a,b]\to\Omega$ with holomorphic extension 
such that
\begin{align}\label{eq:FF0}
  \F\circ\phi = \F_0, \quad \No\circ\phi=\No_0.
\end{align}

We are interested in the special representation formula~\eqref{eq:Weierstrass} for the minimal surface because
discrete minimal surfaces may be defined using an analogous Weierstrass representation formula, see for example~\cite{book,BHS06,Lam18}. 
Thus, our construction relies on discrete holomorphic functions $G_{m,n}$, which are in particular CR-mappings on rectangular lattices, see for example~\cite{book} or \cite[Chapter~4.5]{BS08}. 
Recall that the {\em cross-ratio} of four mutually distinct complex numbers $q_1, \dots , q_4 \in\C$ is defined as
\begin{equation*}
\text{CR}(q_1, q_2, q_3, q_4) = \frac{(q_1 - q_2)(q_3 - q_4)}{(q_2 - q_3)(q_4 - q_1)} . \label{eq:defcr}
\end{equation*}
Given a rectangular lattice in the complex plane whose vertices are labelled as $p_{m,n}$, a CR-mapping $G_{m,n}$ preserves the cross-ratios of all rectangles, see Def.~\ref{defCRmap} below. 
Given a CR-mapping $G_{m,n}$ in the complex plane, Bobenko and Pinkall showed how to obtain a {\em discrete minimal surface} $\F_{m,n}$ via a discrete Christoffel transformation of $G_{m,n}$, see~\cite{book,BS08}. 
In particular, the construction may be summarized by the following formulas:

\begin{align}
 \frac{\F_{m+1,n}-\F_{m,n}}{p_{m+1,n}-p_{m,n}} &=\Re\left[ 
\left(\frac{G_{m+1,n}-G_{m,n}}{p_{m+1,n}-p_{m,n}}\right)^{-1} \begin{pmatrix} 1-G_{m+1,n}G_{m,n}\\
i(1+G_{m+1,n}G_{m,n})\\ 
G_{m+1,n}+G_{m,n} \end{pmatrix} \right], \label{eq:discreteWeierstrass1} \\[1ex]
\frac{\F_{m,n+1}-\F_{m,n}}{-i(p_{m,n+1}-p_{m,n})} &= -\Im\left[ 
i\left(\frac{G_{m,n+1}-G_{m,n}}{p_{m,n+1}-p_{m,n}}\right)^{-1} \begin{pmatrix} 1-G_{m,n+1}G_{m,n}\\
i(1+G_{m,n+1}G_{m,n})\\
G_{m,n+1}+G_{m,n} \end{pmatrix}\right]. \label{eq:discreteWeierstrass2}\\[1ex]
\No_{m,n} &=\sigma(G_{m,n}) \notag
\end{align}
Note that the formulas are analogous to~\eqref{eq:Weierstrass}. The discrete minimal surface can easily be obtained from~\eqref{eq:discreteWeierstrass1}--\eqref{eq:discreteWeierstrass2} by discrete integration.
Lam showed in~\cite[Example~4]{Lam18} that these discrete surfaces are in fact minimal. 

Our goal is to describe an explicit construction based on the given Björling data how to obtain a discrete minimal surface which locally approximates the solution of the Björling problem. 
In particular, we aim at the following theorem.

\begin{thm}\label{theo:MinimalConv}
Given Björling data $\F_0$ and $\No_0$ and a point $\F_0(t_0)$ such that $\dot\F_0(t_0)$ is not parallel to $\dot\No_0(t_0)$, we can locally approximate the solution of the Björling problem $\F$  by discrete minimal surfaces $\F_{m,n}$.
These discrete minimal surfaces can be constructed from suitably chosen initial data obtained from the Bj\"orling data,  in particular from the reparametrization~$\phi$, see Section~\ref{secPhi} below, and the stereographic projection $G_0$ of the Gauss map $\No_0$. Details of our construction algorithm are given in Sections~\ref{secConstruction} and~\ref{secConst2}.

The convergence is in $C^\infty$, that is, all discrete derivatives also converge to their corresponding smooth counterparts.
\end{thm}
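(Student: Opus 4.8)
The plan is to reduce Theorem~\ref{theo:MinimalConv} to a convergence statement for the discrete holomorphic data $G^\eps$, and then to reduce \emph{that} to convergence of the auxiliary functions $F^\eps$ announced in the introduction. More precisely, I would proceed in three stages. \textbf{Stage 1 (analytic setup).} Starting from the Björling data $\F_0,\No_0$ and the point $t_0$, use the holomorphic extensions and Schwarz's formula to produce the smooth minimal surface $\X$ on a neighborhood $D$ of $t_0$; then, using the hypothesis that $\dot\F_0(t_0)$ is not parallel to $\dot\No_0(t_0)$ (which rules out umbilics and, more importantly, guarantees that the curve is transversal to curvature lines so the conformal curvature-line parametrization exists and the reparametrization $\phi$ of Section~\ref{secPhi} is a local diffeomorphism onto a neighborhood $\Omega$ of $\phi(t_0)$), rewrite $\X\circ\phi^{-1}$ in the Weierstrass form~\eqref{eq:Weierstrass} with a holomorphic $g:\Omega\to\C$. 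Record the boundary trace: along the image curve $\phi([a,b])$ one has $\sigma(g)=\No_0\circ\phi^{-1}$, so $g$ restricted to that curve is the stereographic projection $G_0$ of the Gauss map, and one can read off the Cauchy data (value and normal derivative, or equivalently the complex derivative $g'$) of $g$ along the curve from $\F_0,\No_0,\phi$ by differentiating~\eqref{eq:Weierstrass}.

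\textbf{Stage 2 (discrete side and main convergence).} This is where the real work is and where I expect the main obstacle to lie. The discrete map $G^\eps$ (or $G_{m,n}$) is a CR-mapping on a rectangular lattice of mesh $\eps$ inside (a lattice approximation of) $\Omega$, determined by initial values along the discretized curve that are sampled from $G_0$ and from the Cauchy data computed in Stage~1. The claim is $G^\eps\to g$ in $C^\infty$, i.e.\ all discrete difference quotients of $G^\eps$ converge to the corresponding derivatives of $g$ with error $O(\eps^2)$. I would follow the route sketched in the introduction: the cross-ratio (CR) equation for $G^\eps$ is not directly amenable to a discrete Cauchy--Kovalevskaya / stability argument, so one introduces the auxiliary quantities $F^\eps$ of Section~\ref{secCR} (roughly, logarithmic difference quotients of $G^\eps$, chosen so that the CR condition becomes a clean discrete evolution equation), derive the smooth limiting evolution equation for the analogous $f$, and prove in Section~\ref{secConvF} that $F^\eps\to f$ with $O(\eps^2)$ error together with all discrete derivatives. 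The proof of this last point is the crux: it is a discrete Cauchy problem for a nonlinear hyperbolic-type system, and one needs (i) consistency of the discrete scheme with the smooth equation to order $\eps^2$, (ii) a priori bounds / stability on a fixed $\eps$-independent neighborhood, controlling discrete derivatives of all orders uniformly (presumably via discrete analogues of analytic norms $\snrm{\cdot}{}{}{}$, majorant series, or a Cauchy-estimate bootstrap on shrinking domains), and (iii) propagation of the $O(\eps^2)$ error from the initial curve into the two-dimensional lattice domain. The transversality hypothesis at $t_0$ is what makes the Cauchy data non-characteristic for this evolution, so the scheme is well-posed.

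\textbf{Stage 3 (from $G^\eps$ to $\F_{m,n}$).} Once $G^\eps\to g$ in $C^\infty$ with $O(\eps^2)$ error, I would feed this into the discrete Weierstrass formulas~\eqref{eq:discreteWeierstrass1}--\eqref{eq:discreteWeierstrass2}. The right-hand sides are smooth (rational) functions of $G_{m,n}$, $G_{m\pm1,n}$, $G_{m,n\pm1}$ and the difference quotient $(G_{m+1,n}-G_{m,n})/(p_{m+1,n}-p_{m,n})$; since all of these converge (with $O(\eps^2)$ error) to $g$, its neighbors, and $g'$, and since $g'\neq0$ away from umbilics so the inverted difference quotient stays bounded, a Taylor expansion shows the discrete one-forms on the left converge to $\F_u\dd u,\F_v\dd v$ from~\eqref{eq:Weierstrass} with error $O(\eps^2)$. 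Discrete integration (summation) over a lattice path of $O(\eps^{-1})$ steps then yields $\F_{m,n}\to\F$ uniformly on the neighborhood with error $O(\eps)$ from the integration, and the analogous statement for every discrete derivative of $\F_{m,n}$ follows by differencing the Weierstrass formulas before summing; likewise $\No_{m,n}=\sigma(G_{m,n})\to\sigma(g)=\No$. Finally, pulling back by $\phi$ and $\phi^\eps$ (a lattice approximation of $\phi$, also $O(\eps^2)$-accurate by Section~\ref{secPhi}) identifies $\F_{m,n}$ with a discrete sampling of the Björling solution $\X$ near $\F_0(t_0)$, completing the proof. The statement of the theorem is slightly informal (\textquotedblleft locally approximate\textquotedblright), so I would phrase the conclusion as: on a fixed neighborhood of $t_0$ there exist discrete minimal surfaces $\F_{m,n}$, defined for all sufficiently small $\eps$, converging to $\F$ together with all discrete derivatives as $\eps\to0$, with explicit rate $O(\eps)$ for $\F_{m,n}$ itself and $O(\eps^2)$ for the underlying Weierstrass data; the detailed book-keeping of which norms control which derivatives is deferred to Sections~\ref{secConstruction}--\ref{secConst2}.
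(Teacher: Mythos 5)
Your proposal follows essentially the same route as the paper: determine $\phi$ and $g$ from the Björling data, reduce the theorem to $C^\infty$-convergence of the CR-mappings $G^\eps$ to $g$ (Theorem~\ref{theo:ConvG}), prove that by passing to the auxiliary functions $F^\eps$ and their discrete evolution equation via a consistency-plus-stability argument in discrete analytic norms, and finally feed the result into the discrete Weierstrass formulas~\eqref{eq:discreteWeierstrass1}--\eqref{eq:discreteWeierstrass2}. The only discrepancy is quantitative: in Stage~3 you concede an $O(\eps)$ error for $\F_{m,n}$ after summation, whereas the paper retains $O(\eps^2)$ since each edge contributes (edge length)${}\times O(\eps^2)=O(\eps^3)$ over only $O(\eps^{-1})$ edges; this does not affect the convergence claim of the theorem as stated.
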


Our construction of the discrete minimal surfaces relies on the local Weierstrass representation~\eqref{eq:Weierstrass} and its discretization~\eqref{eq:discreteWeierstrass1}--\eqref{eq:discreteWeierstrass2}. The main ingredient for these representations are the holomorphic map $g$ and its discrete counterpart $G_{m,n}$.
Therefore, we first need an answer to the question how to determine $g$ and the reparametrization $\phi$ from the given data $\F_0$ and $\No_0$.

\subsection{Determination of $\phi$ and $g$ from $\F_0$ and $\No_0$}\label{secPhi}
Our first goal is to determine the function $\phi:[a,b]\to\Omega$ such that~\eqref{eq:FF0} holds. As the surface $\F$ is parametrized in conformal curvature line coordinates, we have in particular
\begin{align*}
 \F_u=\frac{\No_u}{\|\No_u\|^2},\qquad \F_v=-\frac{\No_v}{\|\No_v\|^2}, \qquad \|\F_u\|=\|\F_v\|,\qquad \langle \F_u,\F_v \rangle =0.
\end{align*}
Now we can deduce from~\eqref{eq:FF0} by straightforward calculations that
\begin{align*}
 \langle \dot\F_0, \dot \No_0 \rangle &= (\Re\dot \phi)^2 -(\Im\dot \phi)^2 ,\\
 \|\dot \F_0\|\|\dot\No_0\|&= (\Re\dot \phi)^2 +(\Im\dot \phi)^2=|\dot\phi|^2. 
\end{align*}
We immediately obtain
\begin{align}
& (\Re\dot \phi)^2 = \frac{1}{2}\left( \|\dot \F_0\|\|\dot\No_0\| -\langle \dot\F_0, \dot \No_0 \rangle \right)\qquad \text{and}\qquad
 (\Im\dot \phi)^2 = \frac{1}{2}\left( \|\dot \F_0\|\|\dot\No_0\| +\langle \dot\F_0, \dot \No_0 \rangle \right) \notag \\
&\text{and also}\qquad \dot\phi^2 =\|\dot \F_0\|\|\dot\No_0\|\text{e}^{\pm i\omega}=\langle \dot\F_0, \dot \No_0 \rangle \pm i\|\dot\F_0 \times \No_0 \| , \label{eq:phiwinkel}
\end{align}
where $\omega$ is the angle between $\dot\F_0$ and $\dot \No_0$. We assume that $\dot\F_0$ and $\dot \No_0$ are not parallel, that is, the given curve is not tangent to a curvature line of the minimal surface. Therefore, we have $\omega\not=0$ and the sign $\pm$ may be determined by the following considerations.

Let $G_0:[a,b]\to\C$ be the stereographic projection of $\No_0$, that is, $G_0$ is uniquely defined by
\begin{align*}
  \No_0=\sigma\circ G_0.
\end{align*}
Then by definition $g\circ\phi|_{[a,b]}=G_0$, and therefore
\begin{align*}
  g'\circ\phi\,\dot\phi = \dot G_0.
\end{align*}
Moreover, $\langle  \dot \F_0, \No_0\rangle=0$ and by~\eqref{eq:Weierstrass} and \eqref{eq:defsigma} we further deduce
\begin{align*}
  \dot \F_0 = \frac{d}{dt}(\F\circ\phi)
  = \F_u\circ\phi\,\Re\dot\phi + \F_v\circ\phi\,\Im\dot\phi
  = \Re\left[\frac{\dot\phi}{g'\circ\phi}\rho(g\circ\phi)\right]
  = \Re\left[\frac{\dot\phi^2}{\dot G_0}\rho( G_0)\right].
\end{align*}
This determines $\dot\phi^2$ and thus its square root $\dot\phi$ up to sign. By integration, we finally obtain $\phi$ uniquely up to translation. The map $\phi:[a,b]\to\Omega$ determines the curve of initial data in our new parameter space $\Omega$. As $G_0$ and $\F_0$ are real analytic, the same is true for $\phi$. Therefore, $\phi$ may be extended by analytic continuation to a neighborhood of $[a,b]$. 
Thus we have proven

\begin{lemma}\label{lemPhi}
 Assume given Björling data $\F_0$ and $\No_0$ such that 
$\dot\F_0$ and $\dot \No_0$ are not parallel for all $t\in[a,b]$. 
Then a map $\phi:[a,b]\to\Omega$ with holomorphic extension can be constructed from this data such that $\F\circ\phi = \F_0$ and $\No\circ\phi=\No_0$. A geometrically based relation of $\dot\phi^2$ to $\dot\No_0$ and $\dot\F_0$ is given in~\eqref{eq:phiwinkel}.

Furthermore, a corresponding holomorphic map $g$ can be determined such that $\No_0=\sigma\circ g\circ\phi$.
\end{lemma}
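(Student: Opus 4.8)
The plan is to read the chain of elementary identities preceding the statement as a construction recipe and to check, at each stage, that the object produced is real analytic; the non-parallelism hypothesis is used to rule out degeneracies. We work near the point $t_0$, shrinking $[a,b]$ so that $\dot\F_0$ and $\dot\No_0$ stay non-parallel and the Björling solution $\X$ carries a conformal curvature-line parametrization $\F$ of the form \eqref{eq:Weierstrass} on a domain $\Omega$ meeting the image of the initial curve; the existence of such $\F,\Omega,g,\phi$ with \eqref{eq:FF0} is the standard local statement recalled above, so the content is to recover $\phi$ and $g$ from $\F_0,\No_0$ alone. \emph{First}, differentiating \eqref{eq:FF0} gives $\dot\F_0=(\F_u\circ\phi)\Re\dot\phi+(\F_v\circ\phi)\Im\dot\phi$ and the analogous formula for $\dot\No_0$; inserting the curvature-line relations $\F_u=\No_u/\|\No_u\|^2$, $\F_v=-\No_v/\|\No_v\|^2$, $\|\F_u\|=\|\F_v\|$, $\langle\F_u,\F_v\rangle=0$ — together with $\langle\No_u,\No_v\rangle=0$ and $\|\No_u\|=\|\No_v\|$, which hold because $\F$ is \emph{minimal} (by Rodrigues' equations $\No_u=-k_1\F_u$, $\No_v=-k_2\F_v$ with $k_2=-k_1$) — yields $\langle\dot\F_0,\dot\No_0\rangle=(\Re\dot\phi)^2-(\Im\dot\phi)^2$ and $\|\dot\F_0\|\,\|\dot\No_0\|=(\Re\dot\phi)^2+(\Im\dot\phi)^2$. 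Solving this linear system produces the displayed formulas for $(\Re\dot\phi)^2$ and $(\Im\dot\phi)^2$; under the non-parallelism hypothesis both right-hand sides are strictly positive, so $\dot\phi$ never vanishes, and \eqref{eq:phiwinkel} then expresses $\dot\phi^2$ explicitly through $\F_0,\No_0$, its two real-analytic square roots on the interval differing by a global sign.

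\emph{Next}, I would fix the sign and define $g$ along the curve. Since $\sigma$ restricts to a real-analytic diffeomorphism of $\C$ onto $\Sp^2$ minus one point, $G_0:=\sigma^{-1}\circ\No_0$ is real analytic, and $\No\circ\phi=\No_0=\sigma(G_0)$ forces $g\circ\phi|_{[a,b]}=G_0$ by injectivity of $\sigma$, hence $(g'\circ\phi)\,\dot\phi=\dot G_0$ with $\dot G_0\neq0$ because $g'\neq0$ wherever \eqref{eq:Weierstrass} is valid. Substituting into the first equation of \eqref{eq:Weierstrass} restricted to $\phi$ gives
\[
  \dot\F_0=\Re\!\left[\frac{\dot\phi}{g'\circ\phi}\,\rho(g\circ\phi)\right]=\Re\!\left[\frac{\dot\phi^2}{\dot G_0}\,\rho(G_0)\right].
\]
For each fixed $t$ the $\R$-linear map $s\mapsto\Re\!\left[s\,\rho(G_0)/\dot G_0\right]$ from $\C$ to $\R^3$ has image the tangent plane spanned by $\F_u$ and $\F_v$ and is injective since $\F$ is an immersion; as $\dot\F_0$ lies in that plane, the displayed equation has the unique solution $s=\dot\phi^2$, which in particular selects the correct square root in \eqref{eq:phiwinkel}. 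Thus $\dot\phi$ is determined up to the harmless global sign, and integration yields $\phi$ up to an additive constant, which merely translates the base point in $\Omega$; real analyticity of $\dot\phi$ carries over to $\phi$, which therefore extends holomorphically to a complex neighborhood of $[a,b]$.

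\emph{Finally}, I would recover $g$ itself and verify \eqref{eq:FF0}. Shrinking once more so that $\dot\phi\neq0$ on $[a,b]$, the holomorphic extension of $\phi$ is biholomorphic from a complex neighborhood of $[a,b]$ onto a neighborhood of $\phi([a,b])$, so $g:=(\text{holomorphic extension of }G_0)\circ\phi^{-1}$ is holomorphic and satisfies $g\circ\phi=G_0$ on $[a,b]$, whence $\No\circ\phi=\sigma(g\circ\phi)=\No_0$. For $\F\circ\phi=\F_0$: integrating the first equation of \eqref{eq:Weierstrass} along $\phi$ reproduces $\dot\F_0$ by the identity just displayed, so $\F\circ\phi$ and $\F_0$ differ by a constant that we absorb by normalizing $\F(\phi(t_0))=\F_0(t_0)$, and uniqueness of the Björling solution identifies the resulting surface with $\X$. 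The step I expect to need the most care is the sign selection in the middle paragraph — verifying that the Weierstrass equation for $\dot\F_0$ has a unique solution, so that the construction is well defined — together with the passage from real-analytic data on $[a,b]$ to the holomorphic maps $\phi$ and $g$ on a complex neighborhood used in the last two paragraphs.
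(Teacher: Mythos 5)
Your proposal is correct and follows essentially the same route as the paper's derivation in Section~\ref{secPhi}: differentiate \eqref{eq:FF0} using the conformal curvature-line relations to get the two scalar identities for $(\Re\dot\phi)^2$ and $(\Im\dot\phi)^2$, then fix the sign of $\dot\phi^2$ via the equation $\dot\F_0=\Re\bigl[\dot\phi^2\rho(G_0)/\dot G_0\bigr]$ and integrate. You supply a few details the paper leaves implicit --- the Rodrigues-equation justification of $\|\No_u\|=\|\No_v\|$, the injectivity of $s\mapsto\Re\bigl[s\,\rho(G_0)/\dot G_0\bigr]$ onto the tangent plane that makes the sign selection unambiguous, and the recovery of $g$ through the local biholomorphic inverse of $\phi$ --- but the argument is the same.
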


Note that for our construction of discrete minimal surfaces, as detailed in Section~\ref{secConstruction}, it suffices to know the values of $\phi$ on $[a,b]$. But our proof of convergence heavily relies on the fact, that $\phi$ possesses a holomorphic extension.

\subsection{Construction of rectangular lattices and discrete holomorphic functions}\label{secDisHolo}
According to our previous considerations we assume given 
an open domain $\Omega\subset\C$, a parametrized curve $\phi:[a,b]\to\Omega$ with holomorphic extension, and a bounded holomorphic function $g:\Omega\to\C$.
For our proof of convergence we assume that $g$ is known, 
but for the determination of suitable initial data for the discrete holomorphic function, it is actually sufficient to have access to the values of $g\circ\phi=G_0=\sigma\circ\No_0$, see Sections~\ref{secConstruction} and~\ref{secConst2}.

Without loss of generality, we assume that $0\in\Omega$ and $0\in [a,b]$. Furthermore, we restrict our local considerations to $t_0=0$ and to $\phi|_{[-\hat{a},\hat{a}]} =:\gamma:[-\hat{a},\hat{a}]\to\Omega$ with $t\mapsto u(t)+i  v(t)$, 
  normalized to $\gamma(0)=0$,
  where $u,v:D_{\hat{a}}\to\C$ are holomorphic functions on the complex disc $D_{\hat{a}}$ of radius $\hat{a}>0$,
  and are real for real arguments. 
  
  We assume that the trace of $\gamma$ considered as a graph in the $u$-$v$-plane is strictly monotone, that is
  the signs of $\dot u(t)$ and  $\dot v(t)$ do not change on $[-\hat{a},\hat{a}]$, for example $\dot u(t)\geq 0$ and  $\dot v(t)\geq 0$,
 and furthermore $\inf_{|t|<\hat{a}} \dot u(t)>0$ and 
$\inf_{|t|<\hat{a}} \dot v(t)>0$.
Therefore we have to exclude the cases where the given curve $\F_0$ contains a non-trivial part of a curvature line or is tangent to it. 
In terms of the curve $\gamma$, this means that we only consider the case when the the derivative $\frac{d}{dt}\gamma\in \R^2$ is contained in only one quadrant of $\R^2$ and is not parallel to any coordinate axis. 
For simplicity, we restrict ourselves in the following to the case where the vector $\frac{d}{dt}\gamma(0)$ lies in the first quadrant of $\R^2$. The remaining cases can be treated analogously. 

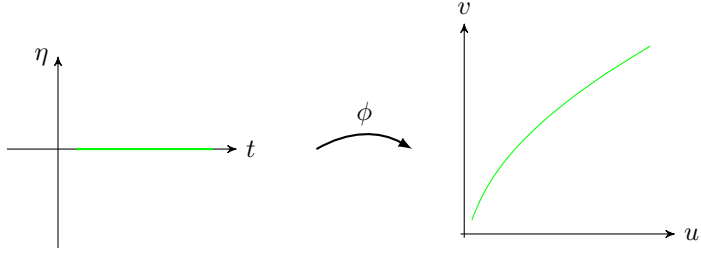
\begin{figure}
\begin{tikzpicture}[rotate=-45,scale=0.3]
          \draw[->, >=stealth'] (-1.7,-1.7)  -- (5.5, 5.5) node(tline)[right]
        {$t$};
        \draw[->, >=stealth'] (3.,-3.2)  -- (-3., 2.8) node(etaline)[left]
        {$\eta$};
\draw [thick, green] (0.095*5,0.095*5) -- (0.095*50,0.095*50);

\draw [thick,-latex,bend left] (8,8) to node (f) [above]{$\phi$} (11,11);
  \end{tikzpicture}\hspace{1em}
 \begin{tikzpicture}[scale=0.5]
    \def\xmin{0}
    \def\xmax{5}
    \def\ymin{0}
    \def\ymax{5}
    \def\xnum{10}
    \def\ynum{10}
    \draw[->, >=stealth'] (-0.2,-0.1)  -- (5.5,-0.1) node(xline)[right]
        {$u$};
         \draw[->, >=stealth'] (-0.1,-0.2)  -- (-0.1,5.5) node(yline)[above]
        {$v$};
    \draw [green] plot [smooth] coordinates {({\xmin+0.01*(2+0.077)*(\xmax - \xmin)},{\ymin + 0.049*(1+0.1)*(\ymax - \ymin)}) ({\xmin+0.01*(2*2+0.077*8)*(\xmax - \xmin)},{\ymin + 0.049*(2+0.1*4)*(\ymax - \ymin)}) ({\xmin+0.01*(2*3+0.077*27)*(\xmax - \xmin)},{\ymin + 0.049*(3+0.1*9)*(\ymax - \ymin)}) ({\xmin+0.01*(2*4+0.077*4*4*4)*(\xmax - \xmin)},{\ymin + 0.049*(4+0.1*16)*(\ymax - \ymin)}) ({\xmin+0.01*(2*5+0.077*5*5*5)*(\xmax - \xmin)},{\ymin + 0.049*(5+0.1*25)*(\ymax - \ymin)}) ({\xmin+0.01*(2*6+0.077*6*6*6)*(\xmax - \xmin)},{\ymin + 0.049*(6+0.1*36)*(\ymax - \ymin)}) ({\xmin+0.01*(2*7+0.077*7*7*7)*(\xmax - \xmin)},{\ymin + 0.049*(7+0.1*49)*(\ymax - \ymin)}) ({\xmin+0.01*(2*8+0.077*8*8*8)*(\xmax - \xmin)},{\ymin + 0.049*(8+0.1*64)*(\ymax - \ymin)}) ({\xmin+0.01*(2*9+0.077*9*9*9)*(\xmax - \xmin)},{\ymin + 0.049*(9+0.1*81)*(\ymax - \ymin)}) ({\xmin+0.01*(2*10+0.077*10*10*10)*(\xmax - \xmin)},{\ymin + 0.049*(10+0.1*100)*(\ymax - \ymin)})};
  \end{tikzpicture}
  \caption{Example of the parametrized curve $\phi|_{[-\hat{a},\hat{a}]} =:\gamma:[-\hat{a},\hat{a}]\to\Omega$ with $t\mapsto u(t)+i  v(t)$ which gives rise to a coordinate transformation from the $(t,\eta)$-plane to the $(u,v)$-plane.}\label{fig:phi}
  \end{figure}

Our approach extensively uses the {\it coordinate transformation}
\begin{equation}\label{eq:p1}
p(t,\eta)=u(t-\eta)+iv(t+\eta)
\end{equation}
based on the functions $u$ and $v$ as indicated in Figures~\ref{fig:phi} and~\ref{fig:mesh}.
For a given $\eps>0$, we now define a discrete parameter space based on a equidistant sampling of $p$ as
 a {\it rectangular lattice} $\Omega^\eps\subset\Omega$ with points (for suitable $m,n\in\Z$)
 \begin{align}\label{eq:pmn}
  p_{m,m}=u(m\eps)+iv(m\eps)=\phi(m\eps)\in\gamma \qquad \text{and}\qquad
  p_{m,n}= 
u(m\eps)+iv(n\eps).
 \end{align}

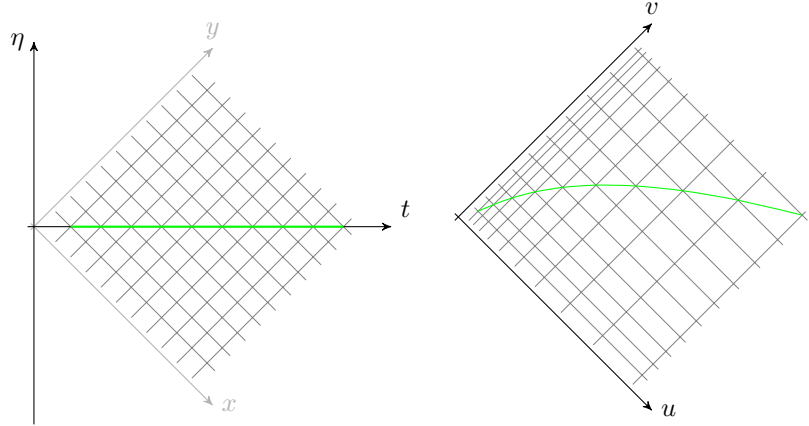
\begin{figure}
\begin{tikzpicture}[rotate=-45,scale=0.6]
    \def\xmin{0}
    \def\xmax{5}
    \def\ymin{0}
    \def\ymax{5}
    \def\xnum{10}
    \def\ynum{10}
    \draw[help lines]
      \foreach \i in {1, ..., \ynum} {
        (\xmin, {\ymin + 0.095*(\i)*(\ymax - \ymin)})
        -- ++(\xmax - \xmin, 0)
      }
      \foreach \i in {1, ..., \xnum} {
({\xmin + 0.095*(\i)*(\xmax - \xmin)}, \ymin)
        -- ++(0, \ymax - \ymin)
      }
    ;
    \draw[->, >=stealth',black!30] (-0.2,-0.1)  -- (5.5,-0.1) node(xline)[right]
        {$x$};
         \draw[->, >=stealth',black!30] (-0.1,-0.2)  -- (-0.1,5.5) node(yline)[above]
        {$y$};
          \draw[->, >=stealth'] (-0.2,-0.2)  -- (5.5, 5.5) node(tline)[above right]
        {$t$};
        \draw[->, >=stealth'] (3.,-3.2)  -- (-3., 2.8) node(etaline)[left]
        {$\eta$};
\draw [thick, green] (0.095*5,0.095*5) -- (0.095*50,0.095*50);
  \end{tikzpicture}\hspace{1em}
 \begin{tikzpicture}[rotate=-45,scale=0.65]
    \def\xmin{0}
    \def\xmax{5}
    \def\ymin{0}
    \def\ymax{5}
    \def\xnum{10}
    \def\ynum{10}
    \pgfmathsetseed{345}
    \draw[help lines]
      \foreach \i in {1, ..., \ynum} {
        (\xmin, {\ymin + 0.049*(\i+0.1*\i*\i)*(\ymax - \ymin)})
        -- ++(\xmax - \xmin, 0)
      }
      \foreach \i in {1, ..., \xnum} {
({\xmin + 0.01*(2*\i+0.077*\i*\i*\i)*(\xmax - \xmin)}, \ymin)
        -- ++(0, \ymax - \ymin)
      }
    ;
    \draw[->, >=stealth'] (-0.2,-0.1)  -- (5.5,-0.1) node(xline)[right]
        {$u$};
         \draw[->, >=stealth'] (-0.1,-0.2)  -- (-0.1,5.5) node(yline)[above]
        {$v$};
    \draw [green] plot [smooth] coordinates {({\xmin+0.01*(2+0.077)*(\xmax - \xmin)},{\ymin + 0.049*(1+0.1)*(\ymax - \ymin)}) ({\xmin+0.01*(2*2+0.077*8)*(\xmax - \xmin)},{\ymin + 0.049*(2+0.1*4)*(\ymax - \ymin)}) ({\xmin+0.01*(2*3+0.077*27)*(\xmax - \xmin)},{\ymin + 0.049*(3+0.1*9)*(\ymax - \ymin)}) ({\xmin+0.01*(2*4+0.077*4*4*4)*(\xmax - \xmin)},{\ymin + 0.049*(4+0.1*16)*(\ymax - \ymin)}) ({\xmin+0.01*(2*5+0.077*5*5*5)*(\xmax - \xmin)},{\ymin + 0.049*(5+0.1*25)*(\ymax - \ymin)}) ({\xmin+0.01*(2*6+0.077*6*6*6)*(\xmax - \xmin)},{\ymin + 0.049*(6+0.1*36)*(\ymax - \ymin)}) ({\xmin+0.01*(2*7+0.077*7*7*7)*(\xmax - \xmin)},{\ymin + 0.049*(7+0.1*49)*(\ymax - \ymin)}) ({\xmin+0.01*(2*8+0.077*8*8*8)*(\xmax - \xmin)},{\ymin + 0.049*(8+0.1*64)*(\ymax - \ymin)}) ({\xmin+0.01*(2*9+0.077*9*9*9)*(\xmax - \xmin)},{\ymin + 0.049*(9+0.1*81)*(\ymax - \ymin)}) ({\xmin+0.01*(2*10+0.077*10*10*10)*(\xmax - \xmin)},{\ymin + 0.049*(10+0.1*100)*(\ymax - \ymin)})};
  \end{tikzpicture}
  \caption{Example of the lattice $\Omega^\eps$ (right) and the curve $\gamma=\phi|_{[-\hat{a},\hat{a}]}$ (right, colored green) passing through the lattice points $p_{n,n}$.}\label{fig:mesh}
  \end{figure}

\begin{rmk}\label{remuv}
The rectangular lattice $\Omega^\eps$ is defined using points on the curve $\gamma=\phi|_{[-\hat{a},\hat{a}]}$. 
Alternatively, we could start with only one point on the curve and the recursive definition of the other mesh points by 
\begin{align*}
p_{m,n}-p_{m-1,n}&=\eps \dot u(m\eps-\epss)\qquad \text{and}\qquad p_{m,n+1}-p_{m,n}= \eps i\dot v(n\eps+\epss). 
\end{align*}
This amounts to a construction of the rectangular lattice from discretizations of $\dot u$ and $\dot v$ by discrete local integration.
Also in the following considerations and proofs, it is sufficient to know $\dot u$ and $\dot v$ on $[-\hat{a},\hat{a}]$. In particular, we could replace $p_{m,n}-p_{m-1,n}$ by $\eps
\dot u(m\eps-\epss)$ and $p_{m,n+1}-p_{m,n}$ by $\eps i\dot v(n\eps+\epss)$ in all 
formulas below.  
\end{rmk}

Given a rectangular lattice $\Omega^\eps$,
we define a discrete holomorphic function 
as the discrete map which preserves the cross-ratios of all rectangles, see Figure~\ref{fig:CRmap} for an example. 

\begin{dfn}[{\cite[Def.~21]{book}, see also~\cite[Chap.~8]{BS08}}]\label{defCRmap}
A map $G:\Omega^\eps\to\C$ is called {\em discrete conformal} or {\em CR-mapping} if 
\begin{equation}\label{eq:defdiscconf}
\text{CR}(G_{m-1,n},G_{m,n},G_{m,n+1},G_{m-1,n+1})= \text{CR}(p_{m-1,n},p_{m,n},p_{m,n+1},p_{m-1,n+1})
\end{equation}

holds for all rectangles of $\Omega^\eps$, where the cross-ratio is defined in ~\eqref{eq:defcr}.
\end{dfn}

\begin{figure}[htb]
  \includegraphics[height=2cm]{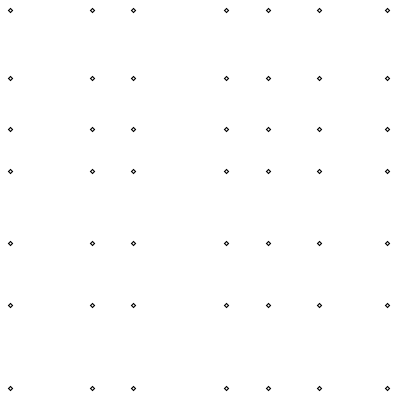}\hspace{2em} 
  \raisebox{3ex}{$\stackrel{G_{m,n}}{\longrightarrow}$} \hspace{2em}
  \includegraphics[height=3cm]{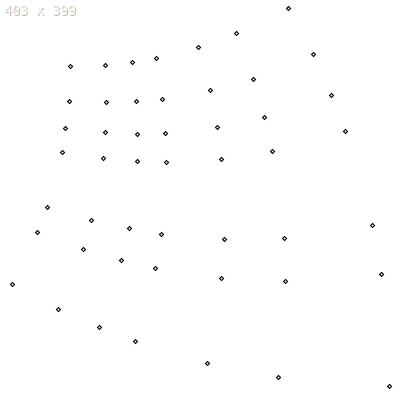}
  \caption{Example of a CR-mapping $G_{m,n}$ on a rectangular lattice}\label{fig:CRmap}
\end{figure}

\begin{figure}[htb]
\begin{tikzpicture}[rotate=-45,scale=0.75]
    \def\xmin{0}
    \def\xmax{5}
    \def\ymin{0}
    \def\ymax{5}
    \def\xnum{10}
    \def\ynum{10}
    \pgfmathsetseed{345}
    \draw[help lines]
      \foreach \i in {1, ..., \ynum} {
        (\xmin, {\ymin + 0.049*(\i+0.1*\i*\i)*(\ymax - \ymin)})
        -- ++(\xmax - \xmin, 0)
      }
      \foreach \i in {1, ..., \xnum} {
({\xmin + 0.01*(2*\i+0.077*\i*\i*\i)*(\xmax - \xmin)}, \ymin)
        -- ++(0, \ymax - \ymin)
      }
    ;
    \filldraw[black] ({\xmin+0.01*(2+0.077)*(\xmax - \xmin)},{\ymin + 0.049*(1+0.1)*(\ymax - \ymin)})  circle (1.5pt);
     \filldraw[black] ({\xmin+0.01*(2+0.077)*(\xmax - \xmin)},{\ymin + 0.049*(2+0.1*4)*(\ymax - \ymin)})  circle (1.5pt);
      \filldraw[black] ({\xmin+0.01*(2*2+0.077*8)*(\xmax - \xmin)},{\ymin + 0.049*(2+0.1*4)*(\ymax - \ymin)}) circle (1.5pt);
       \filldraw[black] ({\xmin+0.01*(2*2+0.077*8)*(\xmax - \xmin)},{\ymin + 0.049*(3+0.1*9)*(\ymax - \ymin)}) circle (1.5pt);
    \filldraw[black]  ({\xmin+0.01*(2*3+0.077*27)*(\xmax - \xmin)},{\ymin + 0.049*(3+0.1*9)*(\ymax - \ymin)}) circle (1.5pt);
    \filldraw[black]  ({\xmin+0.01*(2*3+0.077*27)*(\xmax - \xmin)},{\ymin + 0.049*(4+0.1*16)*(\ymax - \ymin)}) circle (1.5pt);
    \filldraw[black] ({\xmin+0.01*(2*4+0.077*4*4*4)*(\xmax - \xmin)},{\ymin + 0.049*(4+0.1*16)*(\ymax - \ymin)}) circle (1.5pt);
    \filldraw[black] ({\xmin+0.01*(2*4+0.077*4*4*4)*(\xmax - \xmin)},{\ymin + 0.049*(5+0.1*25)*(\ymax - \ymin)}) circle (1.5pt);
    \filldraw[black] ({\xmin+0.01*(2*5+0.077*5*5*5)*(\xmax - \xmin)},{\ymin + 0.049*(5+0.1*25)*(\ymax - \ymin)}) circle (1.5pt);
     \filldraw[black] ({\xmin+0.01*(2*5+0.077*5*5*5)*(\xmax - \xmin)},{\ymin + 0.049*(6+0.1*36)*(\ymax - \ymin)}) circle (1.5pt);
    \filldraw[black] ({\xmin+0.01*(2*6+0.077*6*6*6)*(\xmax - \xmin)},{\ymin + 0.049*(6+0.1*36)*(\ymax - \ymin)}) circle (1.5pt);
    \filldraw[black] ({\xmin+0.01*(2*6+0.077*6*6*6)*(\xmax - \xmin)},{\ymin + 0.049*(7+0.1*49)*(\ymax - \ymin)}) circle (1.5pt);
    \filldraw[black]  ({\xmin+0.01*(2*7+0.077*7*7*7)*(\xmax - \xmin)},{\ymin + 0.049*(7+0.1*49)*(\ymax - \ymin)}) circle (1.5pt);
    \filldraw[black]  ({\xmin+0.01*(2*7+0.077*7*7*7)*(\xmax - \xmin)},{\ymin + 0.049*(8+0.1*64)*(\ymax - \ymin)}) circle (1.5pt);
    \filldraw[black] ({\xmin+0.01*(2*8+0.077*8*8*8)*(\xmax - \xmin)},{\ymin + 0.049*(8+0.1*64)*(\ymax - \ymin)}) circle (1.5pt);
    \filldraw[black] ({\xmin+0.01*(2*8+0.077*8*8*8)*(\xmax - \xmin)},{\ymin + 0.049*(9+0.1*81)*(\ymax - \ymin)}) circle (1.5pt);
     \filldraw[black] ({\xmin+0.01*(2*9+0.077*9*9*9)*(\xmax - \xmin)},{\ymin + 0.049*(9+0.1*81)*(\ymax - \ymin)}) circle (1.5pt);
      \filldraw[black] ({\xmin+0.01*(2*9+0.077*9*9*9)*(\xmax - \xmin)},{\ymin + 0.049*(10+0.1*100)*(\ymax - \ymin)}) circle (1.5pt);
       \filldraw[black]  ({\xmin+0.01*(2*10+0.077*10*10*10)*(\xmax - \xmin)},{\ymin + 0.049*(10+0.1*100)*(\ymax - \ymin)}) circle (1.5pt);
        
        \draw [green] plot [smooth] coordinates {({\xmin+0.01*(2+0.077)*(\xmax - \xmin)},{\ymin + 0.049*(1+0.1)*(\ymax - \ymin)}) ({\xmin+0.01*(2*2+0.077*8)*(\xmax - \xmin)},{\ymin + 0.049*(2+0.1*4)*(\ymax - \ymin)}) ({\xmin+0.01*(2*3+0.077*27)*(\xmax - \xmin)},{\ymin + 0.049*(3+0.1*9)*(\ymax - \ymin)}) ({\xmin+0.01*(2*4+0.077*4*4*4)*(\xmax - \xmin)},{\ymin + 0.049*(4+0.1*16)*(\ymax - \ymin)}) ({\xmin+0.01*(2*5+0.077*5*5*5)*(\xmax - \xmin)},{\ymin + 0.049*(5+0.1*25)*(\ymax - \ymin)}) ({\xmin+0.01*(2*6+0.077*6*6*6)*(\xmax - \xmin)},{\ymin + 0.049*(6+0.1*36)*(\ymax - \ymin)}) ({\xmin+0.01*(2*7+0.077*7*7*7)*(\xmax - \xmin)},{\ymin + 0.049*(7+0.1*49)*(\ymax - \ymin)}) ({\xmin+0.01*(2*8+0.077*8*8*8)*(\xmax - \xmin)},{\ymin + 0.049*(8+0.1*64)*(\ymax - \ymin)}) ({\xmin+0.01*(2*9+0.077*9*9*9)*(\xmax - \xmin)},{\ymin + 0.049*(9+0.1*81)*(\ymax - \ymin)}) ({\xmin+0.01*(2*10+0.077*10*10*10)*(\xmax - \xmin)},{\ymin + 0.049*(10+0.1*100)*(\ymax - \ymin)})};
  \end{tikzpicture}
 \caption{Example of an initial 'zig-zag'-curve in parameter space (black points), containing points of the given curve (green), for the evolution of CR-mappings}\label{fig:zigzag}
\end{figure}
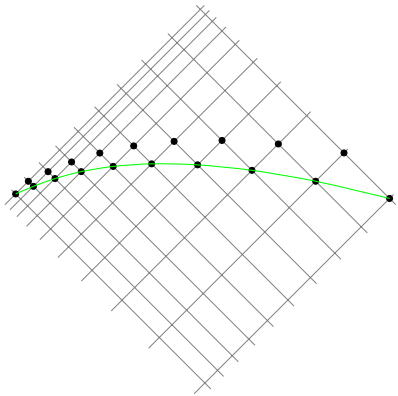

\begin{rmk}
 Replacing the differences of the lattice by derivatives in the spirit of Remark~\ref{remuv}, we have to replace the defining condition~\eqref{eq:defdiscconf} by
 \begin{align*}
  \text{CR}(G_{m-1,n},G_{m,n},G_{m,n+1},G_{m-1,n+1})= -\left(\frac{\dot u(m\eps-\epss)}{\dot v(n\eps+\epss)}\right)^2.
 \end{align*}
\end{rmk}

There is a natural way to construct CR-mappings from initial Cauchy data by prescribing the values for a {\it 'zig-zag'-curve} in parameter space as indicated in Figure~\ref{fig:zigzag}. For generic values, these data can be extended to a CR-mapping in a unique way, that is, all other values are easily obtained inductively from the prescribed data on the 'zig-zag'-curve.

 A solution for~\eqref{eq:defdiscconf} may also be obtained from other discrete curves in the lattice than a proper 'zig-zag'. But our numerical evidence suggests that the 'zig-zag' leads to better results and for other curves the solutions may diverge quickly. This is the reason why we have introduced the non-equidistant rectangular lattice $\Omega^\eps$.
 Furthermore, the necessity of an initial 'zig-zag'-curve requires the given curve $\gamma=\phi|_{[-a,a]}$ not to contain a part of a curvature line (corresponding to the parameter lines $u=const$ and $v=const$).
 
 Consequently, the main difficulty is to come up with appropriate initial data. As already noted in~\cite{BP96}, discrete conformal maps depend very sensitively on their initial data. For inappropriate choices of data on the initial 'zig-zag'-curve the sequence of discrete conformal maps may diverge rapidly.

Given a CR-mapping $G_{m,n}$, a {discrete minimal surface} $\F_{m,n}$ can be obtained by~\eqref{eq:discreteWeierstrass1}--\eqref{eq:discreteWeierstrass2}. 
Therefore, our main aim is to construct a CR-mapping $G_{m,n}$ from suitably chosen initial data on the given curve $\gamma$ and then establish its convergence to the given holomorphic map $g$. 
This leads to the convergence of the corresponding minimal surfaces by~\eqref{eq:Weierstrass} and~\eqref{eq:discreteWeierstrass1}--\eqref{eq:discreteWeierstrass2} respectively.

\begin{thm}\label{theo:ConvG}
 There exist suitable initial values for $G_{m,m}$ and $G_{m,m+1}$ which can be obtained from the Björling data (including derivatives of the data), such that the corresponding CR-mappings, which solve the Cauchy problem for~\eqref{eq:defdiscconf} and the given initial values, exist locally in a neighborhood of $0$  and converge to $g$ in $C^\infty$, that is all discrete derivatives also converge to their corresponding smooth counterparts. 
\end{thm}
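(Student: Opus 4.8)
The plan is to establish Theorem~\ref{theo:ConvG} by a consistency-plus-stability argument for the discrete Cauchy problem attached to the cross-ratio equation~\eqref{eq:defdiscconf}. The first step is to rewrite~\eqref{eq:defdiscconf} as an explicit evolution rule: given the three values $G_{m-1,n}$, $G_{m,n}$, $G_{m-1,n+1}$ at corners of a lattice rectangle, the fourth value $G_{m,n+1}$ is determined as a M\"obius function of these three (with a denominator that must stay away from $0$), and iterating this rule from the initial zig-zag data $\{G_{m,m},G_{m,m+1}\}$ propagates $G^\eps$ into a full neighborhood of $0$; this is the discrete hyperbolic character of the system, with the directions $t\mp\eta$ playing the role of characteristics. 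The reference object is $g\circ p$ with $p$ as in~\eqref{eq:p1}, which solves exactly the continuum first-order transport equation obtained by differentiating $g\circ p$ in the two characteristic directions. The auxiliary functions $F^\eps$ of the introduction will be suitable discrete difference-quotient expressions built from $G^\eps$ (for instance approximations of $g'$), converging to a corresponding smooth $f$; they are used because their evolution equation, derived in Section~\ref{secCR}, is linear in leading order and hence easier to estimate than the M\"obius rule itself. The initial values $G_{m,m}$ and $G_{m,m+1}$ are obtained by sampling $G_0=g\circ\phi=\sigma\circ\No_0$ and adding correction terms assembled from finitely many derivatives of the Bj\"orling data, chosen so that the pair $(G_{m,m},G_{m,m+1})$ matches $(g(p_{m,m}),g(p_{m,m+1}))$ up to a remainder of high order in $\eps$ and so that substituting $g\circ p$ into~\eqref{eq:defdiscconf} leaves a defect of order $\eps^4$; a Taylor expansion of the cross-ratio of the $g$-image of an $\eps$-rectangle, exploiting the symmetry of the configuration, is what yields the order $\eps^4$.

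The core of the argument is the stability estimate, which I expect to be the main obstacle. Writing $G^\eps_{m,n}=g(p_{m,n})+e_{m,n}$ and linearising the evolution rule about $g\circ p$, the error $e$ (equivalently, the error of $F^\eps$ relative to $f$) satisfies a discrete linear hyperbolic equation whose coefficients are controlled by $g'$ and $g''$ on the neighborhood, whose inhomogeneity is the order-$\eps^4$ defect, and with a quadratic-in-$e$ remainder from the nonlinearity. One must show that this evolution does not amplify errors over the $O(\eps^{-1})$ steps needed to reach a point of a fixed neighborhood from the initial curve. I would introduce a weighted sup-norm (or discrete energy) adapted to the two characteristic families and prove a discrete Gronwall inequality: the backward characteristic region of a point contains $O(\eps^{-2})$ rectangles, each contributing $O(\eps^4)$, and the Gronwall amplification is $e^{O(1)}$, so $\|G^\eps-g\circ p\|=O(\eps^2)$ on a fixed neighborhood of $0$. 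A simultaneous bootstrap on the size of this neighborhood keeps $e$ small enough that the M\"obius denominators stay bounded away from $0$, which is precisely the local existence of the CR-mappings asserted in the theorem. The non-equidistance of the lattice $\Omega^\eps$ and the bookkeeping needed to handle both characteristic directions uniformly are the technical difficulties; this is carried out in Section~\ref{secConvF}.

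Finally, the $C^\infty$ statement follows by bootstrapping in the order of differentiation. Because $u$, $v$ and $g$, and hence all the data, are real-analytic, the corrected initial values can be arranged so that every fixed-order discrete difference quotient of $G^\eps$ agrees with the corresponding derivative of $g\circ p$ on the initial curve up to $O(\eps^2)$; and discrete difference quotients of $G^\eps$ of any order satisfy discrete evolution equations obtained by differencing the linearised system, with lower-order discrete derivatives entering as inhomogeneities. The same norm and Gronwall estimate then apply inductively, yielding $O(\eps^2)$ convergence of each fixed-order discrete derivative of $G^\eps$ to the corresponding derivative of $g$, i.e.\ convergence in $C^\infty$. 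Passing from $G^\eps$ back to the surfaces through the discrete Weierstrass formulas~\eqref{eq:discreteWeierstrass1}--\eqref{eq:discreteWeierstrass2} in Section~\ref{secConvMinimal} also gives Theorem~\ref{theo:MinimalConv}; together with the constructions of Sections~\ref{secConstruction} and~\ref{secConst2} this completes the proof of Theorem~\ref{theo:ConvG}.
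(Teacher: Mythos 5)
Your overall architecture (rewrite \eqref{eq:defdiscconf} as an explicit hyperbolic evolution along the diagonals, prove consistency of the sampled smooth solution, then a stability estimate plus a bootstrap that keeps the M\"obius denominators nonzero, then induct on the order of differentiation) matches the paper's strategy in outline, and your identification of the initial zig-zag data as samples of $G_0=\sigma\circ\No_0$ corrected by finitely many derivatives is also what the paper does. But there is one concrete gap at the point you yourself flag as ``the main obstacle'': the stability estimate cannot be obtained by a weighted sup-norm together with a discrete Gronwall inequality over the backward characteristic cone. Whether one linearises the M\"obius rule directly in $e=G^\eps-g\circ p$ as you propose, or works with the auxiliary function $F=\frac1\eps\log Q$ of \eqref{eq:defQ} as the paper does, the one-step update \eqref{eq:devolveFmn} expresses the new value as $(\text{old value})+M\cdot(\text{spatial difference of old values})+O(\eps)$ with $|M|\approx|\Theta|=1$; the $\ell^\infty$ operator norm of one step is therefore about $1+2|M|\approx 3$, not $1+O(\eps)$, and a sup-norm Gronwall argument yields an amplification of order $3^{O(1/\eps)}$ rather than $e^{O(1)}$. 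Your count ``$O(\eps^{-2})$ rectangles each contributing $O(\eps^{4})$'' implicitly assumes the scheme is $\ell^\infty$-stable (a convex-combination or domain-of-dependence structure), which it is not. This is precisely the sensitivity of CR-mappings to initial data noted in the paper after Definition~\ref{defCRmap}.

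The missing idea is the Cauchy--Kovalevskaya-type argument in a scale of analyticity norms: the paper introduces $\|h\|_\rho=\sum_k\frac{(B\rho)^k}{k!}\sup|h^{(k)}|$ and absorbs the dangerous derivative term via the discrete Cauchy estimate $\|W\|_\rho+B\theta\|\delta_\xi W\|_\rho\le\|W\|_{\rho+\theta}$ (Lemma~\ref{lemPropNorm}(4) and \eqref{eq:magic}), paying for each time step with a loss $\rho\mapsto\rho-\eps$ of analyticity radius; only the zeroth-order terms (controlled by $C_\Xi$ and the remainder $\cal R$) enter the Gronwall factor. This is why real analyticity of the Bj\"orling data is not a convenience but a necessity for the proof, a point your proposal does not register. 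A secondary, less critical difference: the paper does not estimate $G^\eps-g\circ p$ in one shot but proves convergence for $F^\eps$ (a discrete analogue of $-\frac12\partial_\xi\log g'\circ p$, whose evolution \eqref{eq:evolvF} is linear at leading order with a uniformly bounded cubic remainder), and then recovers $\log\alpha,\log\beta\to\log g'$ and finally $G\to g$ by two discrete integrations, each needing one extra initial value; your direct linearisation of the M\"obius rule would have variable-coefficient quadratic nonlinearities in $e$ that are harder to control uniformly. If you replace your sup-norm Gronwall step by the analytic-norm machinery (as in \cite{Ma05}), the rest of your outline can be carried through.
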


Suitable initial values are detailed in the following section and in Section~\ref{secConst2}. The proof of Theorem~\ref{theo:ConvG} relies on auxialary functions introduced in Section~\ref{secCR} and is presented in Section~\ref{secConvMinimal}.

\subsection{Construction of Cauchy data for $G_{m,n}$ for the proof of Theorem~\ref{theo:MinimalConv}}\label{secConstruction}
In the following we explain explicitely how to locally construct discrete minimal surfaces which approximate the solution of the Bj\"orling problem.
Starting from  Bj\"orling data in the form
\begin{align*}
  \F_0:[a,b]\to\R^3, \quad \No_0:[a,b]\to\sphere \text{ with } \langle \F_0',\No_0\rangle = 0,
\end{align*}
where the maps $\No_0$ and $\F_0$ admit holomorphic extensions, we first restrict our considerations to a neighborhood $U_0$ of $t_0=0$ such that the restricted curve $\F_0|_{U_0}$ is nowhere tangent to a curvature line (i.e.\ $\dot\No_0$ should not be parallel to $\dot\F_0$). Possibly, we further restrict this neighborhood to obtain the desired convergence, see Section~\ref{secConvF}.

Suitable Cauchy data for $G_{m,n}$, which guarantees the convergence of the corresponding discrete minimal surfaces, may be obtained in different ways. We present here one possibility which uses values of $G_0=\sigma\circ \No_0$ directly as initial values for $G_{m,m}$ and additional values for $G_{m,m+1}$ derived from the Björling data.  Another possibility is detailled in Section~\ref{secConst2}. 

\begin{enumerate}[(i)]
 \item 
Thanks to the relations detailed in Section~\ref{secPhi}, we can pass from the original functions $\F_0$ and $\No_0$ to the map $\phi$, noting the geometric relation~\eqref{eq:phiwinkel} for $\dot\phi^2$.
\item We now choose a parameter $\eps$ and determine 
\begin{align*}
 \dot\phi^2_{m,m}& :=\dot\phi^2(m\eps)
\end{align*}
and from its square root $\dot \phi=\dot u+i\dot v$ also the values (in between mesh points $m\eps$)
\begin{align*}
 \dot u_{m+\halb} &= \dot u(m\eps+\epss) &\text{and}&& \dot v_{m+\halb} &= \dot v(m\eps+\epss).
\end{align*}
Fixing one value $p_{0,0}$ we obtain the rectangular lattice $p_{m,n}$ by discrete integration as in Remark~\ref{remuv} using 
\begin{align*}
p_{m,n}-p_{m-1,n}&=\eps \dot u_{m+\halb} &\text{and}&& p_{m,n+1}-p_{m,n}&= \eps i\dot v_{m+\halb}.
\end{align*}
 \item 
We directly read off all initial values for $m=n$ from the given function $G_0=\sigma\circ\No_0$.
\begin{align}
 G_{m,m}& := G_0(\eps m) = \sigma\circ\No_0(\eps m) \label{eq:defGm1}
 \end{align}
 \item
 Using the derivative $\dot G_0$ of $G_0=\sigma\circ\No_0$ we also read off
 \begin{align*}
 \dot G_{m,m}& := \dot G_0(\eps m) .
 \end{align*}
  The missing initial values $G_{m,m+1}$ are obtained as an extrapolation based on~\eqref{eq:defGm1} and the values of $\dot\phi^2$ and $\dot\phi$ in (ii) by
  \begin{align}
   G_{m,m+1} &:= \frac{\dot u_{m+\halb}G_{m,m} \sqrt[4]{\dot\phi^2_{m,m}\dot G_{m+1,m+1}^2} +i\dot v_{m+\halb}G_{m+1,m+1}\sqrt[4]{\dot\phi^2_{m+1,m+1}\dot G_{m,m}^2}}{\dot u_{m+\halb} \sqrt[4]{\dot\phi^2_{m,m}\dot G_{m+1,m+1}^2} +i\dot v_{m+\halb}\sqrt[4]{\dot\phi^2_{m+1,m+1}\dot G_{m,m}^2}} .\label{eq:defGm2}
  \end{align}

\item 
Evolution by~\eqref{eq:defdiscconf} starting from our initial 'zig-zag'-curve (see Figure~\ref{fig:zigzag}) now produces a CR-mapping $G_{m,n}$ in a neighborhood of $G_0(0)$. The local existence is guaranteed by Theorem~\ref{theo:ConvG}. 
\item 
From this CR-mapping $G_{m,n}$ we construct a discrete minimal surface $\F_{m,n}$ for a suitably chosen starting point $\F_{0,0}=\F_0(0)$ from discrete integration of~\eqref{eq:discreteWeierstrass1}--\eqref{eq:discreteWeierstrass2}.
\end{enumerate}
 As $G_{m,n}$ approximates the smooth function $g$ locally in $C^\infty$ with error of order $\eps^2$ by Theorem~\ref{theo:ConvG},
 we deduce from the smooth Weierstrass representation~\eqref{eq:Weierstrass} that
\begin{align*}
 \frac{\F_{m+1,n}-\F_{m,n}}{p_{m+1,n}-p_{m,n}}&= 
\Re\left[\frac{1}{g'}\rho(g)\right] +\Or(\eps^2)= \F_x+\Or(\eps^2), \\
\frac{\F_{m,n+1}-\F_{m,n}}{-i(p_{m,n+1}-p_{m,n})}&= 
-\Im\left[\frac{1}{g'}\rho(g)\right] +\Or(\eps^2)= \F_y+\Or(\eps^2) .
\end{align*}
This shows that the discrete minimal surface $\F_{m,n}$ locally approximates the smooth minimal surface $\F$ with an error of order~$\eps^2$ and thus proves Theorem~\ref{theo:MinimalConv}.

\begin{figure}[tb]
 \includegraphics[trim=1cm 0 1cm 0, clip,width=0.65\textwidth]{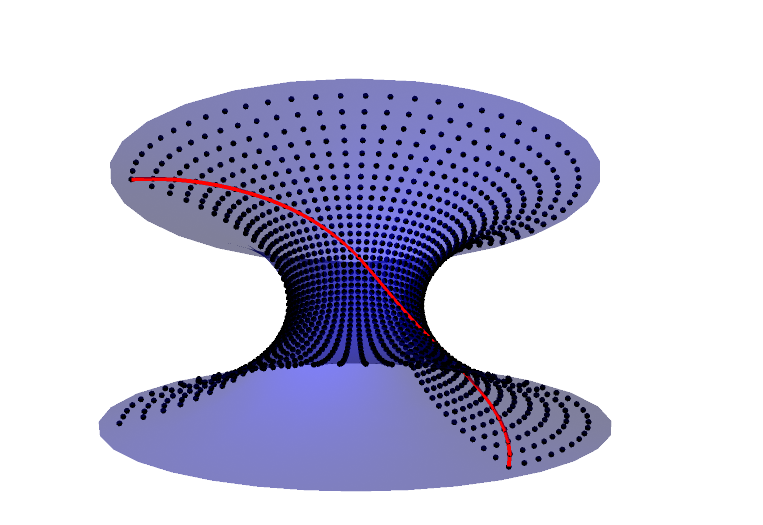}
  \includegraphics[width=0.3\textwidth]{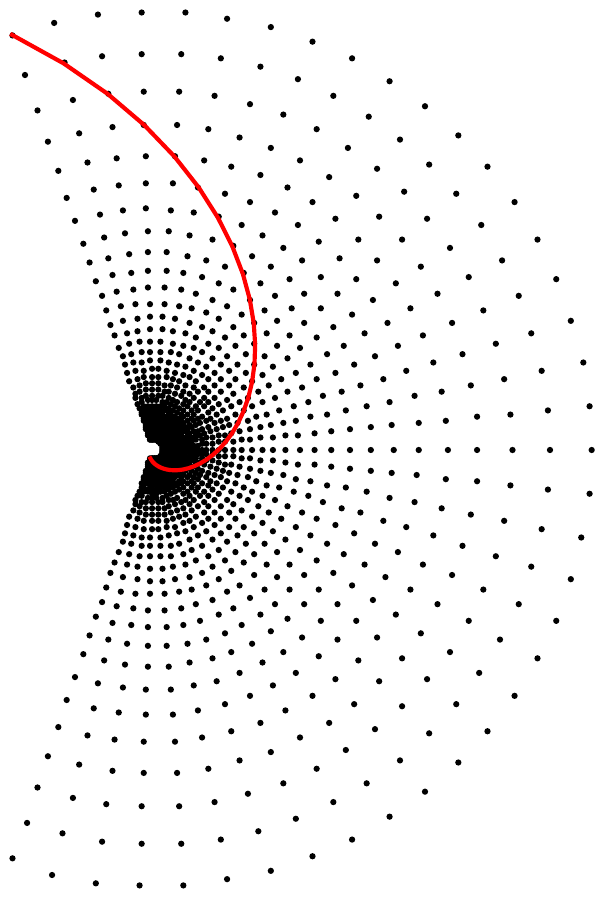}
 \caption{Left: Given curve $\F_0$ (red) and black points of the discrete minimal surface $\F_{m,n}$ obtained from our construction procedure in Section~\ref{secConstruction}. Right: Image of the discrete holomorphic map $G_{m,n}$ (black points) and curve of initial values $G_0$ (red). }\label{fig:Ex1}
\end{figure}

\begin{figure}[tb]
 \includegraphics[trim=1cm 2cm 1cm 0, clip,width=0.7\textwidth]{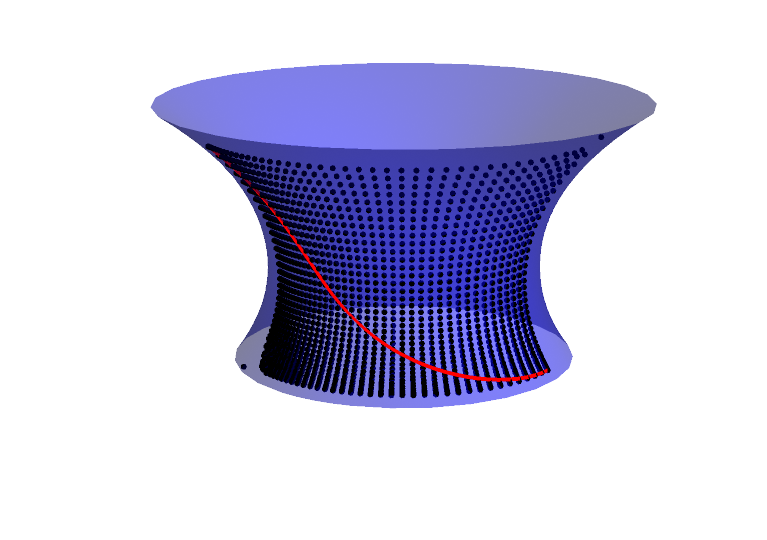}
  \includegraphics[width=0.25\textwidth]{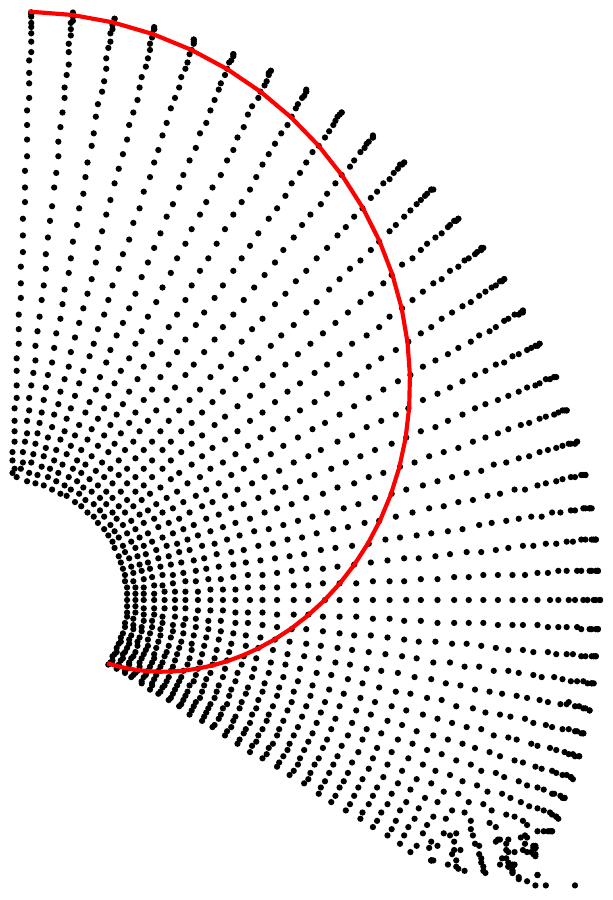}
 \caption{Left: Given curve $\F_0$ (red) and black points of the discrete minimal surface $\F_{m,n}$ obtained from our construction procedure in Section~\ref{secConstruction}. Right: Image of the discrete holomorphic map $G_{m,n}$ (black points) and curve of initial values $G_0$ (red). }\label{fig:Ex2}
\end{figure}

Using our construction procedure we have realised two examples in Figures~\ref{fig:Ex1} and~\ref{fig:Ex2}. In both examples, we have chosen a curve $\F_0$ (marked in red) on the Catenoid which is nowhere tangent to a curvature line. Furthermore, we have taken corresponding normals $\No_0$. The stereographic projection of the curve of normals, that is the trace of $G_0$, is displayed in red on the right of Figures~\ref{fig:Ex1} and~\ref{fig:Ex2} respectively. Additionally, the Figures show the images of the discrete holomorphic maps $G_{m,n}$ obtained by evolution according to steps~(iii)-(v). The discrete parameter space obtained as in Section~\ref{secDisHolo} from the curve $\gamma$ is 
\begin{itemize}
 \item a square lattice as $\gamma(t)=(1-i)t$ for the example in Figure~\ref{fig:Ex1} and 
 \item a rectangular lattice as $\gamma(t)=\frac{3}{2}(1-i)(\sin(\frac{t}{3})+i(1-\cos(\frac{t}{3})))$ for the example in Figure~\ref{fig:Ex2}.
\end{itemize}
For both examples we used the mesh size $\eps=0.1$. In Figure~\ref{fig:Ex2}, the parameter domain is bigger than the actual domain of convergence which results in divergent values for $G_{m,n}$ at one ``corner'' and correspondingly divergent values for the discrete minimal surface.

\section{Cross-ratio evolution: (discrete) mappings from Cauchy data}\label{secCR}
As detailed in Section~\ref{secDisHolo}, our construction is local.
Starting from suitable initial data 
for $G_{m,n}$ on a 'zig-zag'-curve in the parameter space $\Omega^\eps$, that is on the diagonal and on the first upper off-diagonal as indicated in Figure~\ref{fig:zigzag}, we are interested in a corresponding solution $G_{m,n}$ of the Cauchy problem to 
  \begin{align}
    \label{eq:devolve02}
    \frac{G_{m-1,n}-G_{m,n}}{G_{m,n}-G_{m,n+1}} \cdot 
\frac{G_{m,n+1}-G_{m-1,n+1}}{G_{m-1,n+1}-G_{m-1,n}} = 
\frac{(p_{m,n}-p_{m-1,n})^2}{(p_{m,n+1}-p_{m,n})^2}.
  \end{align}
  Our constructions and proof of Theorem~\ref{theo:ConvG} heavily relie on the fact, that we rewrite this  discrete elliptic equation as an initial value problem for a hyperbolic evolution equation.

In this section, we reformulate our problem and derive a discrete evolution equation for an auxialary function and its corresponding smooth counterpart. The solutions of these equations are further studied in Section~\ref{secApprox}. 

\subsection{Derivation of a discrete evolution equation}\label{secDeriv}
Inspired by~\cite[Sec.~5]{Ma05}, we first derive from the cross-ratio equation~\eqref{eq:devolve02} a discrete 
evolution equation for a suitable auxiliary function $F$ for which we then prove convergence to the corresponding smooth counterpart. To 
this end, we start by considering the quotients 
\begin{align}
\alpha_{m,n} &=\frac{G_{m,n+1}-G_{m,n}}{p_{m,n+1}-p_{m,n}}, & \label{eq:defalphabeta}
\beta_{m,n} &= \frac{G_{m,n}-G_{m-1,n}}{p_{m,n}-p_{m-1,n}},\\
Q_{m,n} &= \frac{\beta_{m,n}}{\alpha_{m,n}}= 
\frac{p_{m,n+1}-p_{m,n}}{p_{m,n}-p_{m-1,n}}\cdot 
\frac{G_{m,n}-G_{m-1,n}}{G_{m,n+1}-G_{m,n}}. && \label{eq:defQ}
\end{align}
At the moment we associate these values to vertices of the lattice $\Omega^\eps$, we could also think of these quantities to be defined on the centers of the edges and the rectangular 
faces of the lattice, that is $\Omega^\eps_*$, respectively. For notational 
convenience we will ignore this interpretation until the end of this section.

For further use we define the shift operators $\tau_1$ and $\tau_2$ which shift the indices in $m$- and $n$-direction respectively:
\begin{equation*}
 \tau_1 G_{m,n}= G_{m+1,n}\qquad \text{and}\qquad
\tau_2 G_{m,n}= G_{m,n+1}.
\end{equation*}
 The corresponding shifts in the negative directions are denoted as 
$\tau_{\overline{1}}$ and $\tau_{\overline{2}}$.

\begin{figure}[tb]
\begin{tikzpicture}[rotate=-45,scale=1.5]
\coordinate (A) at (-1, -1);
\coordinate (B) at (0,-1);
\coordinate (C) at (1,-1);
\coordinate (D) at (-1, 0);
\coordinate (E) at (0,0);
\coordinate (F) at (1,0);
\coordinate (G) at (-1,1);
\coordinate (H) at (0,1);
\coordinate (J) at (1,1);
\draw (A) --  (B) node[midway,below] {$\tau_{\overline{1}}\beta$};
\draw (B) -- (C) node[midway,left] {$\beta$};
\draw (C) -- (F) node[midway,right] {$\alpha$}; 
\draw (F) --(J) node[midway,right] {$\tau_2\alpha$}; 
\draw (J) --(G) --(A);
\draw (D)--(E);
\draw (E) -- (H) node[midway,above] {$\tau_{\overline{1}}\tau_2\alpha$};
\draw (E)--(F) node[midway,below] {$\tau_2\beta$};
\draw (B) -- (E) node[midway,above] {$\tau_{\overline{1}}\alpha$};
\end{tikzpicture}
\caption{Four adjacent rectangles and corresponding 
variables associated to edges (or to the left and lower vertices respectively).}\label{fig:foursquares}
\end{figure}
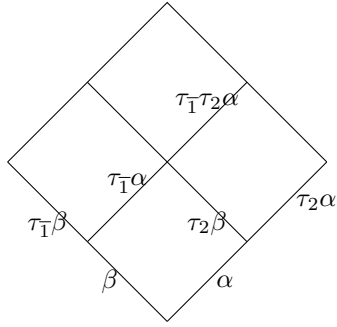
First we gather some more information in order to derive a (non-linear) evolution equation for~$Q$.
The cross-ratio condition~\eqref{eq:devolve02}
implies that $\frac{\beta\, \tau_2\beta}{\alpha\, \tau_{\overline{1}}\alpha}=1$, so
\begin{equation}\label{eq:Q2}
\tau_2\beta =\frac{1}{Q}\tau_{\overline{1}}\alpha.
\end{equation}
Furthermore, we have the closing condition for the edges
\begin{equation}\label{eqclose}
\delta_2p\; \alpha-\delta_{\overline{1}}p\; \tau_2\beta= 
-\delta_{\overline{1}}p\; \beta+\delta_2p\; \tau_{\overline{1}} \alpha,
\end{equation}
where $\delta_{\overline{1}}p\; =p-\tau_{\overline{1}} p$ and $\delta_2p\; =\tau_2 p-p$ denote the distances between neighboring points in the two directions of the rectangular lattice.
This implies in particular
\begin{align}
 &\delta_2p\;  - 
\delta_{\overline{1}}p\; \underbrace{\frac{\tau_2\beta}{\alpha}}_{= 
\frac{\tau_{\overline{1}}\alpha}{Q\alpha}}  = -\delta_{\overline{1}}p\;  Q+ 
\delta_2p\; \frac{\tau_{\overline{1}}\alpha}{\alpha} 
&\iff\qquad & \tau_{\overline{1}}\alpha= \frac{\delta_2p\;  +\delta_{\overline{1}}p\;  
Q}{\delta_2p\;  
+\frac{\delta_{\overline{1}}p\; }{Q}} \alpha. \label{eqalpha-}
\end{align}
Now we additionally consider the next quadrilateral on top, that is shifted by $\tau_2$, together with the corresponding quantities like $\tau_2 \alpha$ and 
$\tau_2 Q=\frac{\tau_2\beta}{\tau_2\alpha}$. We deduce from the closing 
condition~\eqref{eqclose} and the previous result that
\begin{align}
 &\delta_2p\; \frac{\alpha}{\tau_2\alpha}-
\delta_{\overline{1}}p\; \frac{\tau_2\beta}{\tau_2\alpha}= 
-\delta_{\overline{1}}p\; \frac{\beta}{\alpha} \frac{\alpha}{\tau_2\alpha} 
+\delta_2p\; \frac{\tau_{\overline{1}}\alpha}{\alpha} 
\frac{\alpha}{\tau_2\alpha}.\notag \\
\iff\qquad & \delta_2p\; \frac{\alpha}{\tau_2\alpha}- \delta_{\overline{1}}p\; \tau_2 Q 
=
\left(-\delta_{\overline{1}}p\; Q+ \delta_2p\; \frac{\delta_2p\;  -\delta_{\overline{1}}p\;  
Q}{\delta_2p\;  +\frac{\delta_{\overline{1}}p\; }{Q}}\right) 
\frac{\alpha}{\tau_2\alpha} \notag\\
\iff\qquad & \tau_2\alpha= \frac{(\delta_{\overline{1}}p\; Q+\delta_2p\; )(\delta_2p\;  
+\frac{\delta_{\overline{1}}p\; }{Q})- 
\delta_2p\; (\delta_2p\;  +\delta_{\overline{1}}p\;  Q)}{\delta_{\overline{1}}p\;  
\tau_2Q(\delta_2p\;  +\frac{\delta_{\overline{1}}p\; }{Q})} \alpha =
\frac{\delta_{\overline{1}}p\; Q+\delta_2p}{\tau_2Q(\delta_2p\;  Q
+\delta_{\overline{1}}p\;)}\alpha 
\label{eqalpha+}
\end{align}

Finally, we consider the next two quads on the left, that is in 
negative $\tau_1$-direction. Applying equations~\eqref{eqalpha+} 
and~\eqref{eqalpha-} in different orders, we get
\begin{align*}
 &\tau_{\overline{1}}(\tau_2\alpha)= 
\frac{\tau_{\overline{1}}\delta_{\overline{1}}p\;\tau_{\overline{1}}Q +\delta_2p\; }{\tau_{\overline{1}}(\tau_2 Q)(\delta_2p\;  \tau_{\overline{1}}Q
+\tau_{\overline{1}}\delta_{\overline{1}}p\; )}\tau_{\overline{1}}\alpha 
=\frac{(\tau_{\overline{1}}\delta_{\overline{1}}p\; \tau_{\overline{1}}Q +\delta_2p\; )}{\tau_2\tau_{\overline{1}}Q (\delta_2p\;  \tau_{\overline{1}}Q
+\tau_{\overline{1}}\delta_{\overline{1}}p\;)}\frac{(\delta_2p\;  
+\delta_{\overline{1}}p\;  Q)}{(\delta_2p\;  
+\frac{\delta_{\overline{1}}p\; }{Q})} \alpha \\
=\ &\tau_2(\tau_{\overline{1}}\alpha)=\frac{\tau_2\delta_2p\;  +\delta_{\overline{1}}p\;  
\tau_2Q}{\tau_2\delta_2p\;  +\frac{\delta_{\overline{1}}p\; }{\tau_2Q}}\tau_2\alpha
=\frac{(\tau_2\delta_2p\;  +\delta_{\overline{1}}p\;  
\tau_2Q)}{(\tau_2\delta_2p\;  +\frac{\delta_{\overline{1}}p\; }{\tau_2Q})} 
\frac{(\delta_{\overline{1}}p\; Q+\delta_2p\; )}{\tau_2Q(\delta_2p\; Q 
+\delta_{\overline{1}}p\; )}\alpha 
\end{align*}
This leads to an evolution equation for the function $Q$:
\begin{align*}
 \frac{\tau_2\tau_{\overline{1}}Q}{Q} &= \frac{\tau_2Q}{Q}
\frac{(\tau_{\overline{1}}\delta_{\overline{1}}p\;\tau_{\overline{1}}Q +\delta_2p\;)}{(\delta_2p\;  \tau_{\overline{1}}Q
+\tau_{\overline{1}}\delta_{\overline{1}}p\;)} \frac{(\delta_2p\;Q  
+\delta_{\overline{1}}p\;)}{(\delta_2p\; +\frac{\delta_{\overline{1}}p\; }{Q})} \frac{(\tau_2\delta_2p\;  
+\frac{\delta_{\overline{1}}p\; }{\tau_2Q})}{(\tau_2\delta_2p\;  +\delta_{\overline{1}}p\;  
\tau_2Q)}\\
& = \frac{(\tau_2\delta_2p\; \tau_2Q +\delta_{\overline{1}}p\; )}{(\tau_2\delta_2p\;  
+\delta_{\overline{1}}p\;  
\tau_2Q)} \frac{(\delta_2p\; 
+\tau_{\overline{1}}\delta_{\overline{1}}p\; \tau_{\overline{1}}Q)}{(\delta_2p\;  \tau_{\overline{1}}Q
+\tau_{\overline{1}}\delta_{\overline{1}}p\; )}.
\end{align*}

We make the ansatz $Q=\text{e}^{\eps F}$ for some function $F$.
Inserting the ansatz into the evolution equation for $Q$ yields 
\begin{align*}
 \frac{\text{e}^{\eps \tau_2\tau_{\overline{1}}F}}{\text{e}^{\eps F}} 
 =\frac{(\tau_2\delta_2p\; \text{e}^{\eps \tau_2F} 
+\delta_{\overline{1}}p\; )}{(\tau_2\delta_2p\;  +\delta_{\overline{1}}p\;  \text{e}^{\eps \tau_2F})} 
\frac{(\delta_2p\; +\tau_{\overline{1}}\delta_{\overline{1}}p\; \text{e}^{\eps 
\tau_{\overline{1}}F})}{(\delta_2p\;  \text{e}^{\eps \tau_{\overline{1}}F} 
+\tau_{\overline{1}}\delta_{\overline{1}}p\; )}.
\end{align*}

Taking the logarithm on both sides and dividing by $\eps$ we obtain
\begin{thm}
 Given a CR-mapping $G$ on a lattice $\Omega^\eps$, define $Q$ by~\eqref{eq:defQ}. Then the discrete mapping $F$ defined by $Q=\text{e}^{\eps F}$ satisfies the following evolution equation.
\begin{align}\label{eq:evolvF}
 \tau_2\tau_{\overline{1}}F- F=&\ M(\tau_2F-\tau_{\overline{1}}F) 
+2i\eps\;\Xi\; \frac{\tau_2F+\tau_{\overline{1}}F}{2} +\eps^2{\cal R}, \\
\end{align} 
where
\begin{align}
M &=-\frac{1}{2}\left(\frac{\tau_{\overline{1}}\delta_{\overline{1}}p-\delta_2p}{\tau_{\overline{1}} \delta_{\overline{1}}p+\delta_2p} +
\frac{\delta_{\overline{1}}p-\tau_2\delta_2p}{\delta_{\overline{1}}p+\tau_2\delta_2p }\right) 
=-\frac{ \delta_{\overline{1}}p\; \tau_{\overline{1}}\delta_{\overline{1}}p-\delta_2p\; \tau_2\delta_2p
}{(\tau_{\overline{1}}\delta_{\overline{1}}p +\delta_2p)(\delta_{\overline{1}}p+\tau_2\delta_2p)},\label{eqM}
\\[1.5ex]
\Xi&=\frac{1}{2i\eps}\left(\frac{ \tau_{\overline{1}}\delta_{\overline{1}}p-\delta_2p}{\tau_{\overline{1}} \delta_{\overline{1}}p +\delta_2p} -
\frac{\delta_{\overline{1}}p-\tau_2\delta_2p}{\delta_{\overline{1}}p +\tau_2\delta_2p }\right) 
=-\frac{\delta_{\overline{1}}p\; \delta_2p - \tau_{\overline{1}}\delta_{\overline{1}}p\;  
\tau_2\delta_2p
}{i\eps(\tau_{\overline{1}}\delta_{\overline{1}}p+\delta_2p\;)(\delta_{\overline{1}}p+\tau_2\delta_2p)},\label{eqXi} 
\\[1.5ex]
{\cal R}&=\frac{1}{\eps^3}\left( \log\left(
 \frac{\tau_2\delta_2p\; \text{e}^{\eps \tau_2F} 
+\delta_{\overline{1}}p\; }{\tau_2\delta_2p\;  +\delta_{\overline{1}}p\;  
\text{e}^{\eps \tau_2F}}\right)+ 
\eps\frac{\delta_{\overline{1}}p-\tau_2\delta_2p}{\delta_{\overline{1}}p+\tau_2\delta_2p } 
\tau_2F \right. \notag \\
&\qquad\quad \left. -\log\left(\frac{\delta_2p\;  
\text{e}^{\eps \tau_{\overline{1}}F} 
+\tau_{\overline{1}}\delta_{\overline{1}}p\; }{\delta_2p\; 
+\tau_{\overline{1}}\delta_{\overline{1}}p\; \text{e}^{\eps 
\tau_{\overline{1}}F}}\right) 
-\eps \frac{\tau_{\overline{1}}\delta_{\overline{1}}p-\delta_2p}{\tau_{\overline{1}} \delta_{\overline{1}}p+\delta_2p} \tau_{\overline{1}}F\right). \notag
\end{align}
\end{thm}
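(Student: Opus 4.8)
The plan is essentially to \emph{do the algebra that the theorem announces}, so the ``proof'' is really a verification that the substitution $Q=\mathrm{e}^{\eps F}$ turns the multiplicative evolution equation for $Q$ into the stated additive form, with the error terms $M$, $\Xi$, $\mathcal R$ as defined. First I would recall the evolution equation for $Q$ derived in the preceding paragraphs, namely
\begin{align*}
 \frac{\tau_2\tau_{\overline{1}}Q}{Q}
 = \frac{(\tau_2\delta_2p\; \tau_2Q +\delta_{\overline{1}}p\; )}{(\tau_2\delta_2p\;
+\delta_{\overline{1}}p\;  \tau_2Q)}
\frac{(\delta_2p\;  +\tau_{\overline{1}}\delta_{\overline{1}}p\; \tau_{\overline{1}}Q)}{(\delta_2p\;  \tau_{\overline{1}}Q
+\tau_{\overline{1}}\delta_{\overline{1}}p\; )},
\end{align*}
substitute $Q=\mathrm{e}^{\eps F}$, $\tau_2Q=\mathrm{e}^{\eps\tau_2F}$, $\tau_{\overline 1}Q=\mathrm{e}^{\eps\tau_{\overline 1}F}$, $\tau_2\tau_{\overline 1}Q=\mathrm{e}^{\eps\tau_2\tau_{\overline 1}F}$, and take $\frac1\eps\log$ of both sides. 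This immediately gives
\begin{align*}
 \tau_2\tau_{\overline{1}}F - F
 = \frac1\eps\log\!\left(\frac{\tau_2\delta_2p\,\mathrm{e}^{\eps\tau_2F}+\delta_{\overline{1}}p}{\tau_2\delta_2p+\delta_{\overline{1}}p\,\mathrm{e}^{\eps\tau_2F}}\right)
 + \frac1\eps\log\!\left(\frac{\delta_2p+\tau_{\overline{1}}\delta_{\overline{1}}p\,\mathrm{e}^{\eps\tau_{\overline{1}}F}}{\delta_2p\,\mathrm{e}^{\eps\tau_{\overline{1}}F}+\tau_{\overline{1}}\delta_{\overline{1}}p}\right),
\end{align*}
which is an \emph{exact} identity, not yet an asymptotic one.

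The remaining work is purely a bookkeeping exercise: split each of the two logarithmic terms into a part that is \emph{linear} in $\eps\tau_2F$ (resp.\ $\eps\tau_{\overline 1}F$) plus a remainder. Concretely, for a single term of the shape $\frac1\eps\log\!\big(\frac{a\,\mathrm{e}^{\eps w}+b}{a+b\,\mathrm{e}^{\eps w}}\big)$, I would write it as $\frac{a-b}{a+b}w + \eps^2 r(\eps w)$, where $r$ is the defect of the first-order Taylor expansion; one checks directly that $\frac{d}{dw}\big|_{w=0}$ of $\frac1\eps\log(\cdots)$ equals $\frac{a-b}{a+b}$ and the value at $w=0$ is $0$, so the remainder is $O(\eps^2)$ with the explicit form collected into $\mathcal R$ exactly as in the statement. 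Applying this to both terms and then regrouping the two resulting linear coefficients $\tfrac{\tau_{\overline 1}\delta_{\overline 1}p-\delta_2 p}{\tau_{\overline 1}\delta_{\overline 1}p+\delta_2 p}$ and $-\tfrac{\delta_{\overline 1}p-\tau_2\delta_2 p}{\delta_{\overline 1}p+\tau_2\delta_2 p}$ into their half-sum and half-difference produces precisely the combination $M(\tau_2F-\tau_{\overline 1}F)+2i\eps\,\Xi\,\tfrac{\tau_2F+\tau_{\overline 1}F}{2}$, with $M$ and $\Xi$ as in \eqref{eqM} and \eqref{eqXi}; the two displayed forms of $M$ and $\Xi$ agree after clearing denominators, which is a one-line computation. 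The $\frac1{i\eps}$ and $\frac1{\eps^3}$ normalizations are chosen exactly so that $\Xi=O(1)$ and $\mathcal R=O(1)$ when $\delta_{\overline1}p,\delta_2p=O(\eps)$ with smooth symbols $\dot u,\dot v$, but I would note that at the level of this theorem these are merely \emph{definitions} and no size estimate is claimed --- those estimates are deferred to Section~\ref{secApprox}.

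There is no genuine obstacle here: the statement is an algebraic reformulation, and the only thing to be careful about is the sign conventions and the precise splitting point of the Taylor expansion, so that the leftover really is the $\mathcal R$ written in the theorem and nothing spills over into the $M$ or $\Xi$ terms. The mild subtlety worth a sentence is that the logarithm and the fourth/square roots implicit in $Q=\mathrm{e}^{\eps F}$ require a consistent branch choice; locally near $t_0=0$, where $\dot u,\dot v>0$ and $G$ is close to the smooth $g$, the arguments of all logarithms stay in a neighborhood of $1$ (and of the positive reals), so the principal branch is well defined and the manipulation is legitimate. I would therefore present the proof as: (1) substitute the ansatz, (2) take $\frac1\eps\log$, (3) Taylor-expand each log to first order and name the remainder, (4) regroup into sum/difference, (5) check the two alternative closed forms for $M,\Xi$ by a direct computation.
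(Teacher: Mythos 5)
Your proposal is correct and follows essentially the same route as the paper: the paper likewise substitutes $Q=\mathrm{e}^{\eps F}$ into the previously derived multiplicative evolution equation for $Q$, takes $\frac1\eps\log$, and then \emph{defines} $M$, $\Xi$ and $\cal R$ so that the exact identity takes the stated form, with the half-sum/half-difference regrouping of the two linear coefficients giving precisely \eqref{eqM} and \eqref{eqXi}. Your explicit check that the linear part of each logarithm at $w=0$ has slope $\frac{a-b}{a+b}$, and your remark that the theorem itself claims no size estimate on $\cal R$ (that being deferred to Remark~\ref{rem:TaylorF} and Section~\ref{secApprox}), match the paper's treatment.
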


The remainder $\cal R$ may be expressed as
${\cal R}= R(\tau_2\delta_2p\;,\delta_{\overline{1}}p\;,\tau_2F;\eps)-R(\delta_2p\;,\tau_{\overline{1}}\delta_{\overline{1}}p\;,\tau_{\overline{1}}F;\eps)$,
where 
\[R(a,b,F;\eps)=\frac{1}{\eps^3}\left( \log\left(\frac{a\text{e}^{\eps F}+b}{a+b\text{e}^{\eps F}}\right) +\eps \frac{b-a}{a+b} F\right).\]
Note that in our case $a\in i\R$ and $b\in\R$.

Comparing~\eqref{eq:evolvF} to the structure of the evolution equation considered in~\cite[Sec.~5]{Ma05}, there is a similar structure for the linear terms, but our constants $M$ and $\Theta$ depend on the position $p$ in the parameter lattice $\Omega^\eps$. Additionally there is the remainder $\cal R$, which  fortunately turns out to be bounded, but nonetheless burdens the estimates in Section~\ref{secConvF}.

At this point, we re-locate the values of $F$ 
(which arose from the values of $Q$). Note that the midpoints of the quad in the lattice $\Omega^\eps$ build another rectangular lattice 
$\Omega^\eps_*\subset\Omega$.
Instead of the lower right vertex of a 
rectangle, we now associate the values of $F$ to the center: $F_{m-\halb, n+\halb}$, that is to a vertex of $\Omega^\eps_*$. Similarly, we associate 
the quantities $M$ and $\Xi$ not to the lower right vertex of the configuration 
of four incident quads, but to their common vertex $p_{m-1,n+1}$, that is 
$M_{m-1,n+1}$ and $\Xi_{m-1,n+1}$. With this notation, we obtain the discrete 
evolution equation

\begin{multline}\label{eq:devolveFmn}
 F_{m-1-\halb,n+1+\halb}- F_{m-\halb,n+\halb}= M_{m-1,n+1}
(F_{m-\halb,n+1+\halb} -F_{m-1-\halb,n+\halb})  \\
 \shoveright{+2i\eps\cdot \Xi_{m-1,n+1} \cdot\frac{F_{m-\halb,n+1+\halb} 
+F_{m-1-\halb,n+\halb}}{2} \quad}
\\
 +\eps^2(R_{m-\halb,n+1+\halb}(F;\eps) 
-R_{m-1-\halb,n+\halb}(F;\eps)),
\end{multline}
where $R_{m+\halb,n+\halb}(F;\eps) = R((p_{m,n+1}-p_{m,n}), (p_{m+1,n}-p_{m,n}), F_{m+\halb,n+\halb};\eps)$.

In the following, we further study the discrete function $F$ as solution of the discrete evolution equation~\eqref{eq:devolveFmn} and prove its convergence. To this end, we first show that the consistency of the discrete equation and then adapt ideas from~\cite{Ma05} in order to prove $C^0$- and $C^\infty$-convergence.
Finally, the convergence of the functions $\alpha$, $\beta$ and $G$ will be deduced in Sections~\ref{secConvalphabeta} and~\ref{secConst2}.

\subsection{Consistency of the discrete evolution equation~\eqref{eq:devolveFmn}}
A straightforward Taylor expansion suggests that the smooth function which 
corresponds to 
$F$ is $-\frac{1}{2}\frac{g''}{g'}(u'+iv')$. In the 
following, we will derive the corresponding smooth evolution equation and show 
its consistency with the discrete equation.

A key concept in the proof is to work with analytic extensions of the quantities $u$ and $v$. Therefore, we introduce another class of domains.
For $0<r_0<1$ sufficiently small, we define
the double cone
\begin{align}
  \label{eq:32}
  \Diamond_{r_0} := \{(\xi,\eta)\in\C\times\R\,|\,|\xi|+|\eta|\le r_0\},
\end{align}
where $\xi$ is the extension of the real parameter $t$, see Figure~\ref{fig:mesh}. 
Recall that the function $g$ is defined on a domain in the $u$-$v$-plane and we use the coordinate transformation $p(\xi,\eta)=u(\xi-\eta)+iv(\xi+\eta)$ as extension of~\eqref{eq:p1}. We define a function $f:\Diamond_{r_0} \to\C$ by
\begin{align}
  \label{eq:42}
  f(\xi,\eta) &=-\frac{1}{2} \partial_\xi \log g'\circ p(\xi,\eta)= -\frac{1}{2} 
\frac{g''\big(u(\xi-\eta)+iv(\xi+\eta)\big)}{
g'\big(u(\xi-\eta)+iv(\xi+\eta)\big)} (u'(\xi-\eta)+iv'(\xi+\eta)). 
\end{align}
This function satisfies
\begin{align*}
  \partial_\xi f = -\frac{1}{2} (u'+iv')^2\,(\frac{g''}{g'})' 
-\frac{1}{2}\frac{g''}{g'}(u''+iv'') ,
  \quad
  \partial_\eta f = \frac{1}{2}(u'-iv')(u'+iv')\,(\frac{g''}{g'})' 
-\frac{1}{2}\frac{g''}{g'}(-u''+iv'') ,
\end{align*}
and therefore, we obtain the corresponding smooth ``evolution'' equation
\begin{align}
  \label{eq:evolvef}
  &\partial_\eta f = \Theta\cdot\partial_\xi f + 2i\cdot\Xi\cdot f,
  \quad\\
\text{where }\quad  &\Theta(\xi,\eta) = \frac{\partial_\eta p(\xi,\eta)}{\partial_\xi p(\xi,\eta)}=
-\frac{u'(\xi-\eta)-iv'(\xi+\eta)}{u'(\xi-\eta)+iv'(\xi+\eta)} \quad\\
\text{and }\quad&\Xi(\xi,\eta) = \frac{1}{2i}\partial_\xi \Theta(\xi,\eta) =
-\frac{u''(\xi-\eta) v'(\xi+\eta) -v''(\xi+\eta) u'(\xi-\eta)}{(u'(\xi-\eta) 
+iv'(\xi+\eta))^2}.
\end{align}
Notice that $|\Theta(\xi,\eta)|=1$ for real arguments $\xi$ and $\eta$. Furthermore, \eqref{eq:evolvef} represents in fact the Cauchy-Riemann equations, which takes into account our coordinate transformation $p$. For $u=id$ and $v=id$ we obtain the usual form.

Instead of dealing with $F$ directly, we consider a suitable semi-discrete function $F^\eps:\Diamond^\eps_{r_0}\to\C$  on the time-discretized double cone
\begin{align}
  \label{eq:82}
  \Diamond^\eps_{r_0}:= \{(\xi,\eta)\in\Diamond_{r_0}\,| \,\eta\in\epss\Z\}.
\end{align}
We can also think of this domain as a 1-parameter family of shifted lattices with the complex parameter $\xi$.
For further use, we define the {\em discrete partial derivatives} on 
$\Diamond^\eps_{r_0}$ by
\begin{align}
 \delta_\eta H(\xi,\eta)= \frac{1}{\eps} (H(\xi,\eta+\epss)-H(\xi,\eta-\epss))
\quad \text{and}\quad
 \delta_\xi H(\xi,\eta)= \frac{1}{\eps} (H(\xi+\epss,\eta)-H(\xi-\epss,\eta))
\end{align}
as well as the {\em mean value operator} $I_\xi$ via
\begin{align}
 I_\xi H(\xi,\eta)= \frac{1}{2} (H(\xi+\epss,\eta)+H(\xi-\epss,\eta))
\end{align}
and the ratio
\begin{align}\label{eq:defThetaeps}
 \Theta^\eps(\xi,\eta) &=\frac{\delta_\eta p(\xi,\eta)}{\delta_\xi p(\xi,\eta)}
 =-\frac{u(\xi-\eta+\epss)-u(\xi-\eta-\epss) 
-i(v(\xi+\eta+\epss)-v(\xi+\eta-\epss))}{u(\xi-\eta+\epss)-u(\xi-\eta-\epss) 
+i(v(\xi+\eta+\epss)-v(\xi+\eta-\epss))}
\end{align}

In order to obtain a discrete evolution equation for $F^\eps$, we introduce the functions
\begin{align}
  M^\eps(\xi,\eta) 
  &= I_\xi \Theta^\eps(\xi,\eta) =\frac{1}{2}\left( \Theta^\eps(\xi+\epss,\eta) 
+\Theta^\eps(\xi-\epss,\eta) \right), \\
 \Xi^\eps(\xi,\eta) &=\frac{1}{2i}\delta_\xi \Theta^\eps(\xi,\eta) =\frac{1}{2i\eps}\left( \Theta^\eps(\xi+\epss,\eta) 
-\Theta^\eps(\xi-\epss,\eta) \right),  
\end{align}
and
\begin{multline}\label{eq:defR}
 R(F^\eps;\xi\pm\epss,\eta,\eps)= \\
  \frac{1}{\eps^3} 
 \Lo \left(u(\xi-\eta+\epss \pm\epss) -u(\xi-\eta-\epss \pm\epss), v(\xi+\eta+\epss \pm\epss) -v(\xi+\eta-\epss \pm\epss), \eps F^\eps(\xi\pm \epss,\eta)\right),\\
\end{multline}
where
\begin{equation*}
\Lo(\ell_a, \ell_b, {\cal H})= \log\left(\frac{i\ell_b\text{e}^{{\cal H}} +\ell_a}{i\ell_b 
+\ell_a \text{e}^{{\cal H}}}\right)+ {\cal H} 
\frac{\ell_a-i\ell_b}{\ell_a+i\ell_b}.
\end{equation*}

Equipped with this notation, we can apply analogous reasoning as in the previous section starting from the curves $\gamma_{t_0}(t)=u(t_0+t)+iv(t_0+t)$ in order to define the values of $F^\eps$ on the corresponding shifted lattices. This can be summarized in the following relation:
\begin{lemma}\label{lem:devolve2}
 The semi-discrete function $F^\eps$ satisfies the discrete evolution equation
\begin{multline}
  \label{eq:devolve2}
  \delta_\eta F^\eps(\xi,\eta)
  = M^\eps(\xi,\eta)\delta_\xi F^\eps(\xi,\eta)
 + 2i\cdot \Xi^\eps(\xi,\eta) \cdot I_\xi F^\eps(\xi,\eta) \\ 
+\eps^2(R(F^\eps;\xi+\epss,\eta,\eps) -R(F^\eps;\xi-\epss,\eta,\eps)).
\end{multline}
\end{lemma}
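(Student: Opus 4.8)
The plan is to reproduce, in the semi-discrete setting, the algebraic derivation that was carried out in Section~\ref{secDeriv} for the fully discrete function $F$ on the lattice $\Omega^\eps$. The only genuinely new feature is that $\xi$ is now a continuous (indeed complex) parameter, while the $\eta$-direction is discretized on $\epss\Z$; but the closing conditions and the cross-ratio relation that drove the derivation of~\eqref{eq:evolvF} are local identities that do not care whether the $\xi$-direction is sampled or not. So I would first set up, for each fixed base point $t_0$, the shifted curve $\gamma_{t_0}(t)=u(t_0+t)+iv(t_0+t)$ and the associated family of shifted rectangular lattices parametrized by $\xi$, exactly as indicated in the paragraph preceding the statement. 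On such a lattice the quantities $\alpha$, $\beta$, $Q=\beta/\alpha$ of~\eqref{eq:defalphabeta}--\eqref{eq:defQ} are defined, the cross-ratio condition~\eqref{eq:devolve02} gives $\beta\,\tau_2\beta=\alpha\,\tau_{\overline1}\alpha$, and the edge-closing relation~\eqref{eqclose} holds verbatim with $\delta_{\overline1}p$, $\delta_2p$ now being the differences $\delta_\xi p$, $\delta_\eta p$ appropriately shifted.

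Second, I would carry over the chain of manipulations~\eqref{eqalpha-}, \eqref{eqalpha+} and the two-orderings computation that produced the $Q$-evolution equation, substitute the ansatz $Q=\mathrm e^{\eps F}$, take logarithms and divide by $\eps$. This is the same computation as in the theorem preceding Lemma~\ref{lem:devolve2}, and it yields the identity in the form
\begin{multline*}
  \frac1\eps\big(\eps\,\tau_2\tau_{\overline1}F-\eps F\big)
  = \tfrac1\eps\log\!\frac{\tau_2\delta_2p\,\mathrm e^{\eps\tau_2F}+\delta_{\overline1}p}{\tau_2\delta_2p+\delta_{\overline1}p\,\mathrm e^{\eps\tau_2F}}
  - \tfrac1\eps\log\!\frac{\delta_2p\,\mathrm e^{\eps\tau_{\overline1}F}+\tau_{\overline1}\delta_{\overline1}p}{\delta_2p+\tau_{\overline1}\delta_{\overline1}p\,\mathrm e^{\eps\tau_{\overline1}F}} .
\end{multline*}
Splitting each logarithm into its linear-in-$\eps$ part plus an $\eps^2$-remainder in the manner of the function $R(a,b,F;\eps)$ introduced just after the theorem, and collecting the linear parts into the symmetric combination $M^\eps\delta_\xi F^\eps$ and the antisymmetric combination $2i\,\Xi^\eps I_\xi F^\eps$, reproduces~\eqref{eq:devolve2}. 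The third step is purely bookkeeping: I would re-locate the values of $F$ from the lower-right vertex to the face center, translating $\tau_2\tau_{\overline1}F-F$ into $\eps\,\delta_\eta F^\eps(\xi,\eta)$, $\tau_2F-\tau_{\overline1}F$ into $\eps\,\delta_\xi F^\eps(\xi,\eta)$ and $\tfrac12(\tau_2F+\tau_{\overline1}F)$ into $I_\xi F^\eps(\xi,\eta)$, and check that the edge-differences $\delta_{\overline1}p$, $\delta_2p$, $\tau_{\overline1}\delta_{\overline1}p$, $\tau_2\delta_2p$ evaluated on the shifted lattice are exactly the four arguments appearing inside $R(F^\eps;\xi\pm\epss,\eta,\eps)$ in~\eqref{eq:defR}, recalling that $\delta_{\overline1}p$ lies in $\R$ (it is a $v$-increment? no---a $u$-increment, hence real) and $\delta_2p$ lies in $i\R$, which matches the placement of $i\ell_b$ versus $\ell_a$ in the definition of $\Lo$.

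The main obstacle is not conceptual but notational: one must verify that the index shifts survive the change of bookkeeping, i.e.\ that the arguments $u(\xi-\eta+\epss\pm\epss)-u(\xi-\eta-\epss\pm\epss)$ and $v(\xi+\eta+\epss\pm\epss)-v(\xi+\eta-\epss\pm\epss)$ in~\eqref{eq:defR} really are the correct $\tau_{\overline1}$- and $\tau_2$-shifted edge lengths of the quad configuration around the center $(\xi,\eta)$ of $\Diamond^\eps_{r_0}$, and that the signs and the $i$-factors in $M^\eps$, $\Xi^\eps$ and $\Lo$ line up with those of $M$, $\Xi$ and $R$ from the lattice computation. Since every step is an equality derived from the cross-ratio condition and the closing condition, both of which hold on the shifted lattices by construction, no estimates are needed and the lemma follows. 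I would therefore present the proof as: ``Repeat the derivation of the theorem preceding this lemma on the family of shifted lattices $\{\gamma_{t_0}\}$, substitute $Q=\mathrm e^{\eps F}$, and re-center the variables as in~\eqref{eq:defR}; this yields~\eqref{eq:devolve2}.''
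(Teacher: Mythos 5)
Your proposal is correct and is essentially the paper's own argument: the paper justifies Lemma~\ref{lem:devolve2} precisely by rerunning the derivation of~\eqref{eq:evolvF}/\eqref{eq:devolveFmn} on the family of shifted lattices generated by the curves $\gamma_{t_0}$ and then re-centering the variables so that $\tau_2\tau_{\overline{1}}F-F$, $\tau_2F-\tau_{\overline{1}}F$ and $\tfrac12(\tau_2F+\tau_{\overline{1}}F)$ become $\eps\,\delta_\eta F^\eps$, $\eps\,\delta_\xi F^\eps$ and $I_\xi F^\eps$, with the edge increments matching the arguments of $\Lo$ in~\eqref{eq:defR}. Your bookkeeping of the real $u$-increments versus the purely imaginary $v$-increments, and their placement as $\ell_a$ versus $i\ell_b$ in $\Lo$, is consistent with the paper's remark that $a\in i\R$ and $b\in\R$.
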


Equation~\eqref{eq:devolve2} is compatible with \eqref{eq:devolveFmn} in the following sense:
if a function $F^\eps:\Diamond^\eps_{r_0}\to\C$ satisfies \eqref{eq:devolve2}, 
then the ``projection'' of its values given by
\begin{align}\label{eq:Fproj}
  F_{m-\halb,n+\halb} = F^\eps\big((n+m)\epss,(n-m+1)\epss\big),
\end{align}
satisfies \eqref{eq:devolveFmn}.

\begin{rmk}\label{rem:TaylorF}
 For further use, we consider for positive parameters $\ell_a,\ell_b$ the function $\Lo(\ell_a, \ell_b, \cdot)$,
 \begin{equation}\label{eq:defRest}
{\cal H}\mapsto \log\left(\frac{i\ell_b\text{e}^{{\cal H}} +\ell_a}{i\ell_b +\ell_a \text{e}^{{\cal H}}}\right)+ {\cal H} 
\frac{\ell_a-i\ell_b}{\ell_a+i\ell_b}
\end{equation}
on the domain $\R\times[-i\pi/4,i\pi/4]$ in the complex plane. Lateron, the interval $[-A,A] \times[-i\pi/4,i\pi/4]$ for a suitable $A>0$ will be sufficient. Observe that $\Lo(\ell_a, \ell_b, \cal H)$ is
is analytic in ${\cal H}$ in a neighborhood of zero and odd. Furthermore, for small $\cal H$ a
Taylor expansion (using a computer algebra program) gives
\begin{equation*}
\log\left(\frac{i\ell_b\text{e}^{{\cal H}} +\ell_a}{i\ell_b +\ell_a
\text{e}^{{\cal H}}}\right) +{\cal H} 
\frac{\ell_a-i\ell_b}{\ell_a+i\ell_b} =\Or({\cal H}^3).
\end{equation*}
This shows in particular, that $\widehat{\Lo}(\ell_a, \ell_b, \cal H):= \frac{1}{\cal H^3}\Lo(\ell_a, \ell_b, \cal H)$ as function in $\cal H$ has a solvable singularity at the origin. Therefore by~\eqref{eq:defR}, for uniformly bounded $F^\eps$ and $\eps$ small enough the ``remainder'' 
term $\eps^2(R(F^\eps;\xi+\epss,\eta,\eps) -R(F^\eps;\xi-\epss,\eta,\eps)) 
=\Or(\eps^2)$ is small.
%
 In Section~\ref{secExist} we will show that for suitable initial data as detailed in~\eqref{eq:init1}--\eqref{eq:init2} or~\eqref{eq:init1b}--\eqref{eq:init2b}, there exists a uniform bound on $F^\eps$, see also~\eqref{eq:estFn2}. 
\end{rmk}

\begin{lemma}\label{lemConsist}
Let $F^\eps$ be a solution of~\eqref{eq:devolve2} and let $f$ be a solution 
of~\eqref{eq:evolvef}. Let $\Delta F=F^\eps-f$. Then
\begin{align*} 
 \delta_\eta \Delta F(\xi,\eta)
  = &M^\eps(\xi,\eta)\delta_\xi \Delta F(\xi,\eta)
 + 2i\cdot \Xi^\eps(\xi,\eta) \cdot I_\xi \Delta F(\xi,\eta) \\
 &+\eps^2(R(\Delta F;\xi+\epss,\eta,\eps) -R(\Delta F;\xi-\epss,\eta,\eps))
+\eps {\cal S}(f,F;\xi,\eta,\eps),
\end{align*}
where $|{\cal S}|\leq C$ uniformly and $C$ depends on $F^\eps,f,u,v$, but not on $\eps$.
\end{lemma}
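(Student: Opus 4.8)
The plan is to substitute $F^\eps = \Delta F + f$ into the discrete evolution equation~\eqref{eq:devolve2} and subtract a suitably ``discretized'' version of the smooth equation~\eqref{eq:evolvef}, collecting everything that is not already in the desired linear form into the error term $\eps\,\cal S$. Concretely, since $F^\eps$ solves~\eqref{eq:devolve2}, we write
\[
 \delta_\eta\Delta F = \delta_\eta F^\eps - \delta_\eta f
 = M^\eps\delta_\xi F^\eps + 2i\,\Xi^\eps I_\xi F^\eps
 + \eps^2\big(R(F^\eps;\xi+\epss,\cdot) - R(F^\eps;\xi-\epss,\cdot)\big) - \delta_\eta f,
\]
and then use linearity of $\delta_\xi$, $I_\xi$ and (after first-order expansion) of $R$ in its last slot to split each term as (term in $\Delta F$) + (term in $f$). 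The terms in $\Delta F$ reproduce exactly the right-hand side claimed in the lemma, so everything else must be shown to be $\Or(\eps)$ and to collect into $\eps\,\cal S$.

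The main steps are then purely about Taylor estimates of the remaining $f$-contributions. First I would show $\delta_\eta f - M^\eps\delta_\xi f - 2i\,\Xi^\eps I_\xi f = \Or(\eps)$ by comparing with the smooth equation: $\delta_\eta f = \partial_\eta f + \Or(\eps^2)$, $\delta_\xi f = \partial_\xi f + \Or(\eps^2)$, $I_\xi f = f + \Or(\eps^2)$, while $M^\eps = \Theta + \Or(\eps^2)$ (it is the midpoint average of $\Theta^\eps$, itself a centered difference quotient of $p$, hence agrees with $\Theta$ up to $\Or(\eps^2)$) and $\Xi^\eps = \Xi + \Or(\eps^2)$; using $\partial_\eta f = \Theta\,\partial_\xi f + 2i\,\Xi f$ from~\eqref{eq:evolvef}, the leading terms cancel and a factor $\eps$ remains. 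Second, for the remainder $R$, I would use Remark~\ref{rem:TaylorF}: since $\Lo(\ell_a,\ell_b,\cal H) = \Or(\cal H^3)$, its partial derivative in $\cal H$ at a generic point is $\Or(\cal H^2)$, so writing $R(F^\eps;\dots) = R(\Delta F;\dots) + \partial_{\cal H}R(\cdot)\big|_{\cal H = \eps f + O(\eps)}\cdot\eps f + \Or((\eps f)^2\cdot\text{stuff})$... — more carefully, one expands $R(F^\eps;\cdot) - R(\Delta F;\cdot)$ and finds it is $\Or(1)$ (not just $\Or(\eps)$), so that $\eps^2$ times the $\xi$-difference of these is $\Or(\eps^2)\subset\Or(\eps)$; the boundedness here relies on $f$, $F^\eps$, $u$, $v$ being uniformly bounded, which is where the dependence of $C$ on those quantities enters. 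Finally, collect all the $\Or(\eps)$ pieces, factor out one $\eps$, and call the result $\cal S$.

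The expected main obstacle is the careful bookkeeping of the remainder term $R$: one must verify that $R(F^\eps;\xi\pm\epss,\eta,\eps) - R(\Delta F;\xi\pm\epss,\eta,\eps)$ is genuinely bounded uniformly in $\eps$ (so that the $\eps^2$ prefactor makes it negligible) rather than merely $\Or(\eps^{-1})$, which would spoil the estimate. This needs the removable-singularity observation $\widehat\Lo(\ell_a,\ell_b,\cal H) = \cal H^{-3}\Lo(\ell_a,\ell_b,\cal H)$ is analytic near $\cal H = 0$ together with the lattice spacings $\ell_a = \delta_{\overline 1}p$, $\ell_b = \delta_2 p$ being $\Or(\eps)$ with $\Or(\eps)$ lower bounds coming from $\inf\dot u,\inf\dot v > 0$; one then writes $\eps^3 R = \widehat\Lo(\ell_a,\ell_b,\eps F^\eps)\cdot(\eps F^\eps)^3$ and notes that dividing by $\eps^3$ leaves $\widehat\Lo(\ell_a,\ell_b,\eps F^\eps)\cdot F^{\eps\,3}$, bounded since $\widehat\Lo$ is continuous and its arguments lie in a compact set. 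The difference $R(F^\eps) - R(\Delta F)$ is then estimated by a mean-value inequality in the $\cal H$-slot using $|\eps F^\eps - \eps\Delta F| = \eps|f|$, which supplies one extra $\eps$ if one wants it, but the $\eps^2$ prefactor already suffices; the rest is routine. A secondary, purely cosmetic difficulty is that $M^\eps$, $\Xi^\eps$ multiply $\delta_\xi f$, $I_\xi f$ which themselves are only $\Or(\eps^2)$-close to their smooth limits, so one should expand each product and track that no $\Or(1)$ term survives — it does not, because $\partial_\eta f - \Theta\partial_\xi f - 2i\Xi f \equiv 0$ exactly.
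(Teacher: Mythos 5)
Your proposal is correct and follows essentially the same route as the paper's (very terse) proof: Taylor expansion of $\delta_\xi f$, $\delta_\eta f$, $M^\eps=\Theta+\Or(\eps^2)$, $\Xi^\eps=\Xi+\Or(\eps^2)$ together with the smooth equation~\eqref{eq:evolvef} to get consistency of the linear part up to $\Or(\eps^2)$, and the removable-singularity observation of Remark~\ref{rem:TaylorF} to see that both $R(F^\eps;\cdot)$ and $R(\Delta F;\cdot)$ are $\Or(1)$ for uniformly bounded arguments, so the $\eps^2$ prefactor makes their discrepancy absorbable into $\eps\,{\cal S}$. Your initial remark about ``linearity of $R$ in its last slot'' is not right as stated, but you immediately replace it with the correct $\Or(1)$-boundedness argument, so the proof stands.
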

\begin{proof}
 By Taylor expansions of $\delta_\xi f$, $\delta_\eta f$, 
$M^\eps =-\frac{(u')^2+(v')^2}{(iv'+u')^2} +\Or(\eps^2)$, and 
$\Xi^\eps =\frac{2i(u'v''-v'u'')}{(iv'+u')^2}  +\Or(\eps^2)$,
we easily deduce from~\eqref{eq:evolvef} that
\[ \delta_\eta f=M^\eps\cdot \delta_\xi f+ 2i\;\Xi^\eps\cdot f 
+\Or(\eps^2).\]
Remark~\ref{rem:TaylorF} implies that $R(\Delta F;\xi+\epss,\eta,\eps) 
-R(\Delta F;\xi-\epss,\eta,\eps) =\Or(1)$ and $R(F^\eps;\xi+\epss,\eta,\eps) 
-R(F^\eps;\xi-\epss,\eta,\eps) =\Or(1)$. The claim follows with Lemma~\ref{lem:devolve2}.
\end{proof}

\section{$C^\infty$-convergence of $F^\eps$ to $f$}\label{secConvF}
In this section we prove that with suitably chosen initial conditions 
exists and approximates $f$.

\begin{thm}\label{theoConvF}
For a given function $f$ in~\eqref{eq:42} the solution of the discrete Cauchy problem consisting of the evolution 
equation~\eqref{eq:devolve2} for $F^\eps$ and $(\xi,\eta)\in \Diamond^\eps_r$, where $0<r<r_0$, together  with the initial values
\begin{align}
 F^\eps(\xi,0)&= f(\xi,0), \label{eq:init1}\\
 F^\eps(\xi,\epss)&= f(\xi,0) + \epss(\Theta(\xi,0)\cdot \partial_\xi 
f(\xi,0) +2i\;\Xi(\xi,0) f(\xi,0)). \label{eq:init2}
\end{align}
exists and approximates $f$ with all its derivatives.
\end{thm}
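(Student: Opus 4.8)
The plan is to prove convergence in two stages, following the overall strategy announced at the end of Section~\ref{secCR} and adapting the ideas of~\cite{Ma05}: first establish a uniform bound on $F^\eps$ and $C^0$-convergence $F^\eps \to f$, then bootstrap to convergence of all discrete derivatives. The discrete evolution~\eqref{eq:devolve2} is a linear hyperbolic scheme (advection with lower-order terms) for $\Delta F = F^\eps - f$, and by Lemma~\ref{lemConsist} the consistency error is $\eps\cdot\mathcal S$ with $|\mathcal S|\le C$, plus the bounded remainder term $\eps^2(R(\Delta F;\xi+\epss,\cdot)-R(\Delta F;\xi-\epss,\cdot))$. The essential structural feature is that $|\Theta(\xi,\eta)|=1$ for real arguments, hence $|\Theta^\eps|\le 1+\Or(\eps^2)$ and $|M^\eps|\le 1+\Or(\eps^2)$ near the real axis; this is what makes the scheme stable, since $\delta_\eta F^\eps = M^\eps\delta_\xi F^\eps + \dots$ advances one $\tfrac\eps2$-layer in $\eta$ by a convex-type combination of two neighboring $\xi$-values (weights $\tfrac12(1\pm M^\eps)$) plus $\Or(\eps)$ corrections. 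The use of the double cone $\Diamond_{r_0}$, with its slopes matched to the unit propagation speed, is precisely tailored so that the domain of dependence stays inside the region where $g$, $u$, $v$ are holomorphic.

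First I would set up a weighted sup-norm on the complex $\xi$-slices. Writing $F^\eps_k(\xi) := F^\eps(\xi, k\epss)$ and similarly $f_k$, one rewrites~\eqref{eq:devolve2} as a one-step recursion
\begin{align*}
 \Delta F_{k+1}(\xi) = \tfrac12\big(1+M^\eps_k\big)\Delta F_k(\xi+\epss) + \tfrac12\big(1-M^\eps_k\big)\Delta F_k(\xi-\epss) + \eps\, i\,\Xi^\eps_k\, I_\xi\Delta F_k(\xi) + \eps^2(\dots) + \eps\,\mathcal S_k(\xi),
\end{align*}
where the first two terms come from combining $\delta_\eta$ and $M^\eps\delta_\xi$. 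Using $|1\pm M^\eps_k| \le 1 + C\eps^2$ and holomorphicity of $\Delta F_k$ (so that the $\Or(\eps^2)$ remainder in $R$, being a composition of $\Delta F_k$ with an analytic function with removable singularity, is controlled on a slightly smaller disc), I would estimate $\|\Delta F_{k+1}\|_{D_{r'}}$ by $\|\Delta F_k\|_{D_{r'+\epss}}$ up to a factor $(1+C\eps)$ and an additive $C\eps^2$; here the $\epss$-shrinkage of the disc radius at each step accounts for the $\xi\pm\epss$ evaluations and is exactly the double-cone geometry. Iterating $k \sim 1/\eps$ steps gives $\|\Delta F_k\| \le e^{CT}(k\eps^2 + \|\Delta F_0\| + \|\Delta F_1\|)$. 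The initial data~\eqref{eq:init1}--\eqref{eq:init2} are chosen so that $\Delta F_0 \equiv 0$ and $\Delta F_1 = \Or(\eps^2)$ (a one-step Taylor expansion of $f$ in $\eta$ against the $\epss$-Euler step in~\eqref{eq:init2}), so $\|\Delta F\|_{C^0} = \Or(\eps)$ on $\Diamond^\eps_r$ for $r<r_0$ — this simultaneously yields the uniform bound on $F^\eps$ promised in Remark~\ref{rem:TaylorF}, closing the circular dependence on boundedness of the remainder.

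For the $C^\infty$ part, since all data are holomorphic in $\xi$, I would use Cauchy estimates: the $C^0$-bound $\|\Delta F\|_{D_{r}} = \Or(\eps)$ on discs gives $\|\partial_\xi^j \Delta F\|_{D_{r-\delta}} = \Or(\eps)$ for any fixed $j$ and $\delta>0$, hence convergence of all $\xi$-derivatives; the $\eta$-derivatives are then handled by differentiating the evolution equation — or more simply, by noting that discrete $\eta$-differences of $F^\eps$ satisfy an evolution equation of the same type (with source terms that are discrete differences of $\mathcal S$, again $\Or(\eps)$ by the regularity just obtained) and rerunning the $C^0$ argument. Alternatively one invokes~\eqref{eq:evolvef} to trade $\partial_\eta$ for $\partial_\xi$ up to controlled errors. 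The main obstacle I anticipate is the bookkeeping of the nested domain-shrinkage together with the need to keep $F^\eps$ a priori bounded before the remainder estimate of Remark~\ref{rem:TaylorF} applies: this should be resolved by a continuity/bootstrap argument in $k$ (the bound $|F^\eps|\le A$ on layers $\le k$ propagates to layer $k+1$ as long as $k\eps \le r_0 - r$), so that boundedness and the $\Or(\eps)$ error estimate are proved simultaneously by induction on the layer index, rather than assumed. The position-dependence of $M^\eps$ and $\Xi^\eps$ (unlike the constant-coefficient case of~\cite[Sec.~5]{Ma05}) and the extra bounded remainder $\mathcal R$ only affect constants, not the structure of the argument.
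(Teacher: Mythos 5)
There is a genuine gap at the heart of your $C^0$ stability estimate. You rewrite the scheme with weights $\tfrac12(1\pm M^\eps)$ and claim $|1\pm M^\eps_k|\le 1+C\eps^2$, concluding a per-step amplification factor $1+C\eps$ for sup norms on shrinking discs. But $M^\eps=\Theta+\Or(\eps^2)$ with $\Theta=-\frac{u'-iv'}{u'+iv'}$ a genuinely \emph{complex} unimodular number (for a square lattice, $\Theta=i$), so the two weights do not form a convex combination: $\tfrac12\big(|1+M^\eps|+|1-M^\eps|\big)$ equals $|\cos(\theta/2)|+|\sin(\theta/2)|$ for $M^\eps=e^{i\theta}$ and can be as large as $\sqrt2$. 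Iterated over $\sim r/\eps$ layers this gives geometric blow-up, so the scheme is not $L^\infty$-stable in the naive sense. Nor can you repair this by Cauchy estimates on shrinking discs: bounding $|F_k(\xi+\epss)-F_k(\xi-\epss)|\le\eps\,\theta^{-1}\sup_{D_{r'+\theta+\epss}}|F_k|$ forces either $\theta=\Or(\eps)$ (per-step factor $1+\Or(1)$, blow-up) or $\theta=\Or(1)$ (total shrinkage $\Or(1/\eps)$, no room). This is exactly the classical loss-of-one-derivative obstruction; the paper circumvents it with the majorant norms $\|h\|_\rho=\sum_k\frac{(B\rho)^k}{k!}\sup_{|\xi|\le\rho}|h^{(k)}|$ and the key inequality~\eqref{eq:magic}, $\|h\|_\rho+B\eps\|h'\|_\rho\le\|h\|_{\rho+\eps}$, which absorbs the advection term into a radius loss of exactly $\eps$ per layer, leaving only the zeroth-order term $2i\Xi^\eps I_\xi F^\eps$ to contribute the Gronwall factor $(1+\eps C_\Xi)$. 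Some such Cauchy--Kovalevskaya-type norm (or an Ovsyannikov scale-of-spaces argument) is an essential ingredient, not bookkeeping; your proposal as written does not close.

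Two secondary points. First, the scheme is a three-level (leapfrog) recursion --- $\delta_\eta$ is a centered difference, so layer $\eta+\epss$ is determined by layers $\eta$ and $\eta-\epss$ --- which is why two initial layers~\eqref{eq:init1}--\eqref{eq:init2} are prescribed and why the paper tracks sums over consecutive layers (${\cal D}_n^\eps$, ${\cal L}_n$); your two-level rewriting obscures this but is repairable. Second, your error accounting via Lemma~\ref{lemConsist} ($\eps{\cal S}$ with ${\cal S}$ bounded) only yields $\|\Delta F\|=\Or(\eps)$, whereas the paper's result (and its abstract) claims $\Or(\eps^2)$; to get the quadratic rate one must use that the residual of the analytic $f$ in the centered scheme is itself $\Or(\eps^2)$ (the term estimated by $Q^*\eps^2$ in~\eqref{eq:LC}), so the accumulated error over $1/\eps$ layers is $\eps\cdot\eps^2\cdot\eps^{-1}\cdot\eps=\Or(\eps^2)$ rather than $\Or(\eps)$. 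Your treatment of the remainder $R$ and the a priori bound on $F^\eps$ by induction on layers is sound and matches the paper, as does the strategy for higher derivatives.
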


The methods of the proofs extend ideas from~\cite[Section 3]{Ma05} to our case. This technical adaptions are mostly due to the fact that $M^\eps$ and $\Theta^\eps$ are not constant and we have to deal with an additional term $\eps^2(R(F^\eps;\xi+\epss,\eta,\eps) -R(F^\eps;\xi-\epss,\eta,\eps))$ with $R$ given by~\eqref{eq:defR}.

\subsection{Some useful norms and their basis properties}
Adapting ideas from~\cite{Ma05}, we consider for smooth functions $h$ defined on $\{|\xi|<\rho_0\}$ and $0<\rho<\rho_0$ the norms
\begin{align*}
  \|h\|_\rho = \sum_{k=0}^\infty 
\frac{(B\rho)^k}{k!}\sup_{|\xi|\le\rho}|h^{(k)}(\xi)|,
\end{align*}
where $B\ge1$ is a constant that depends on $u$ and $v$.
We choose $B\geq 5\sup_{\Diamond_{r_0}}|\Theta^\eps|$ for $\eps$ small enough.
That norm has the advantage that for $\rho+\eps<\rho_0$ we have
\begin{align}
  \label{eq:magic}
  \|h\|_\rho + B\eps\|h'\|_\rho \le \|h\|_{\rho+\eps},
\end{align}
since
\begin{align*}
   B\|h'\|_\rho = \sum_{k=1}^\infty 
\frac{B^k\rho^{k-1}}{(k-1)!}\sup_{|\xi|\le\rho}|h^{(k)}(\xi)|
  = \sum_{k=0}^\infty \frac 
k{\rho}\frac{(B\rho)^k}{k!}\sup_{|\xi|\le\rho}|h^{(k)}(\xi)|,
\end{align*}
and since
\begin{align*}
  \big(B(\rho+\eps)\big)^k \ge \left(1+\eps\frac k\rho\right)(B\rho)^k.
\end{align*}

Similarly, we can define norms on discrete functions $W:{\cal I}_n^\eps\to\C$, 
where
\begin{equation*}
 {\cal I}_n^\eps=\begin{cases} [-\frac{m}{2}\eps, \frac{m}{2}\eps]\cap \eps\Z & 
\text{if } m \text{ is even}, \\
[-\frac{m}{2}\eps, \frac{m}{2}\eps]\cap (\eps\Z+\epss) &\text{if } m \text{ is 
odd}.
\end{cases}
\end{equation*}
For $\rho>0$ set
\begin{equation}
  \|W\|_\rho = \sum_{k=0}^n
\frac{(B\rho)^k}{k!} \max_{\xi\in {\cal I}_{n-k}^\eps} 
|\delta_\xi^{(k)}W(\xi)|.
\end{equation}

With analogous proofs as for~\cite[Lemma~3.1]{Ma05}, this norm has 
the following properties:
\begin{lemma}\label{lemPropNorm}
\begin{enumerate}[(1)]
 \item For $0\leq \rho'\leq \rho$ one has $ \|W\|_{\rho'}\leq \|W\|_\rho$.
\item \emph{Absolute bound}: $|W(x)|\leq \|W\|_\rho$ for all $x\in{\cal 
I}_n^\eps$.
\item \emph{Submultiplicity}: $\|W_1 W_2\|_\rho \leq\|W_1\|_\rho \|W_2\|_\rho$
\item \emph{Discrete Cauchy estimate}: For $\theta>0$ we have $\|W\|_\rho + 
B\theta \|\delta_xW\|_\rho \le \|W\|_{\rho+\theta}$.
\item \emph{Restriction estimate}: If $w:\{|\xi|\leq \rho'\}\to \C$ is analytic and if
$W^\eps$ is its restriction to ${\cal I}_n^\eps \subset \{|\xi|\leq \rho'\}$, 
then for and $\rho<\rho'/B$ we have
\begin{equation*}
 \|W\|_\rho \leq \left(1-\frac{B\rho}{\rho'}\right)^{-1} \sup_{|\xi|\le\rho'}|w(\xi)|
\end{equation*}
\item \emph{Analyticity estimate}: Let functions $W^\eps: {\cal 
I}_{n^\eps}^\eps\to \C$ be given with $n^\eps \eps\geq s\geq 0$ and assume 
that $\|W^\eps\|_\rho\leq C$ for a sequence $\eps\to 0$.
Denote by $L[\delta_\xi^k W^\eps]$ the continuous function which interpolates 
linearly the values of $\delta_\xi^k W^\eps$.
Then there exists an analytic $w:[-s,s]\to\C$ such that $L[\delta_\xi^k 
W^{\eps(k)}]$ converges uniformly to $\partial_\xi^k w$ for each $k\geq 0$ and 
a suitable subsequence $\eps(k)\to 0$ of~$\eps$.
Moreover, $w$ possesses a complex extension to $\{\xi\in\C : 
\text{dist}(\xi,[-s,s])\leq B\rho \}$ which is bounded by~$C$.
\end{enumerate}
\end{lemma}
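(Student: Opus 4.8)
The plan is to verify the six properties in order, treating each as a discrete counterpart of an elementary fact about the analytic norms $\|h\|_\rho$, and to lean on the analogous arguments in \cite[Lemma~3.1]{Ma05} wherever the discrete setting does not introduce a genuinely new difficulty. Properties~(1) and~(2) are immediate from the definition: monotonicity in $\rho$ follows term-by-term since $(B\rho')^k\le(B\rho)^k$ for $0\le\rho'\le\rho$ (using $B\ge1$), and the absolute bound is just the $k=0$ summand together with nonnegativity of the remaining terms, after noting that every $x\in{\cal I}_n^\eps$ lies in ${\cal I}_{n-0}^\eps$. For submultiplicity~(3), I would apply the discrete Leibniz rule $\delta_\xi^{(k)}(W_1W_2)=\sum_{j=0}^k\binom{k}{j}(\delta_\xi^{(j)}W_1)(I_\xi^{k-j}\delta_\xi^{(k-j)}W_2)$ (a shifted-argument version of the product rule, whose shift operators $I_\xi$ do not increase the max-norm), take suprema, and reorganize the double sum exactly as in the continuous case; the factors $(B\rho)^k/k!=\sum_j \frac{(B\rho)^j}{j!}\frac{(B\rho)^{k-j}}{(k-j)!}$ recombine to give the product of the two norms. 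The discrete Cauchy estimate~(4) is the combinatorial identity already displayed in the excerpt for the smooth norm: writing $B\|\delta_\xi W\|_\rho=\sum_k \frac{k}{\rho}\frac{(B\rho)^k}{k!}\max|\delta_\xi^{(k)}W|$ and using $(B(\rho+\theta))^k\ge(1+\theta k/\rho)(B\rho)^k$, one gets $\|W\|_\rho+B\theta\|\delta_\xi W\|_\rho\le\|W\|_{\rho+\theta}$; the only discrete subtlety is that the index $k$ now ranges up to $n$, which only helps (fewer terms).

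For the restriction estimate~(5), I would use that $\delta_\xi^{(k)}$ applied to the restriction of an analytic $w$ is a divided-difference operator, hence equals an average of $w^{(k)}$ over a small interval by the standard Hermite–Genocchi / Cauchy-integral representation of divided differences; consequently $\sup_{{\cal I}_{n-k}^\eps}|\delta_\xi^{(k)}W|\le\sup_{|\xi|\le\rho+k\eps}|w^{(k)}(\xi)|$, and a Cauchy estimate on the disc of radius $\rho'$ bounds $|w^{(k)}|$ by $k!\,(\rho'-\rho-k\eps)^{-k}\sup_{|\xi|\le\rho'}|w|$; summing the geometric-type series and absorbing the $\eps$-corrections (legitimate for $\eps$ small and $\rho<\rho'/B$) yields the factor $(1-B\rho/\rho')^{-1}$, exactly as in \cite{Ma05}. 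Property~(6), the analyticity estimate, is the real work and I expect it to be the main obstacle: from the uniform bound $\|W^\eps\|_\rho\le C$ one reads off, for each fixed $k$, that $\sup|\delta_\xi^{(k)}W^\eps|\le C\,k!/(B\rho)^k$, so the linear interpolants $L[\delta_\xi^{(k)}W^\eps]$ are uniformly bounded; one must also extract equicontinuity, which comes from bounding $\delta_\xi^{(k+1)}W^\eps$ in the same way (the $(k+1)$-st difference controls the modulus of continuity of the $k$-th). Arzelà–Ascoli plus a diagonal argument over $k$ then produces a subsequence $\eps(k)$ and limit functions; the compatibility $\partial_\xi(\text{limit of }\delta_\xi^{(k)})=\text{limit of }\delta_\xi^{(k+1)}$ identifies them as successive derivatives of a single $w$, and the bounds $|w^{(k)}|\le C\,k!/(B\rho)^k$ are precisely a Cauchy-type estimate guaranteeing that $w$ extends holomorphically to the $B\rho$-neighborhood of $[-s,s]$ with $|w|\le C$ there.

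Throughout, the guiding principle is that the weight $(B\rho)^k/k!$ is engineered so that the discrete difference operators obey the same bookkeeping as genuine derivatives; the only places where one must be careful are (i) the shift operators hidden inside the discrete Leibniz rule, which I dispose of by noting they are norm-nonincreasing averages, and (ii) the passage from discrete to continuous in~(6), where uniform boundedness of \emph{all} discrete derivatives, not just a finite number, is what upgrades $C^0$-convergence to analyticity of the limit. I would present (1)--(5) briefly, referring to \cite[Lemma~3.1]{Ma05} for the structurally identical computations, and give (6) in full since it is the statement actually used to close the convergence argument in Section~\ref{secConvF}.
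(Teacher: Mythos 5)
Your proposal is correct and follows exactly the route the paper intends: the paper itself gives no proof, merely asserting that the arguments are ``analogous'' to \cite[Lemma~3.1]{Ma05}, and your sketch carries out precisely those analogues (termwise monotonicity for (1)--(2), the shifted discrete Leibniz rule with the Cauchy-product recombination of the weights for (3), the displayed combinatorial inequality for (4), divided-difference plus Cauchy estimates for (5), and Arzel\`a--Ascoli with a diagonal extraction for (6)). One refinement is needed at the very end of (6): the termwise bounds $|w^{(k)}|\le C\,k!/(B\rho)^k$ alone only yield a holomorphic extension to the \emph{open} $B\rho$-neighbourhood with a bound of the form $C\big(1-|z|/(B\rho)\big)^{-1}$, not the claimed bound by $C$; to get $|w(\xi_0+z)|\le C$ for $|z|\le B\rho$ you should pass the full summed inequality $\sum_k \frac{(B\rho)^k}{k!}\sup|\delta_\xi^{(k)}W^\eps|\le C$ to the limit (e.g.\ by Fatou applied to the partial sums), obtaining $\sum_k \frac{(B\rho)^k}{k!}|w^{(k)}(\xi_0)|\le C$ and then dominating the Taylor series of $w$ at $\xi_0$ term by term. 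This bound by $C$ is actually used later (in the proof of Lemma~\ref{lemFeps}), so it is worth stating the limit passage explicitly; everything else in your argument is sound.
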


For multi-component functions $\sigma=(\sigma_1,\dots,\sigma_p) : 
{\cal I}_n^\eps \to\C^p$ we set $\|\sigma\|_\rho= \max_{k=1,\dots,p}\|\sigma_k\|_\rho$.
Note that also the following lemma still holds for our submultiplicative norms.

\begin{lemma}[{\cite[Lemma~3.2]{Ma05}}]\label{lemNorm2}
Let the analytic function ${\cal F} : \{|s|\leq A\}^p\ \to \C$ satisfy
\begin{equation*}
 |{\cal F}(s_1,\dots,s_p)| \leq {\cal C}(|s_1|,\dots,|s_p|)
\end{equation*}
for all $(s_1,\dots,s_p)\in \{|s|\leq A\}^p$, with some function ${\cal C} 
\geq 0$ that is non-decreasing in each of its
arguments. Then for each $\gamma > 1$ and every discrete function $\sigma : 
{\cal I}_n^\eps \to\C^p$ with $\gamma\cdot \max_{k=1,\dots,p}\|\sigma_k\|_\rho \leq 
A$, the composition ${\cal F}(\sigma)$ is well defined on ${\cal I}_n^\eps$ and
\begin{equation}
\|{\cal F}(\sigma)\|_\rho \leq \Gamma {\cal C}(\gamma \|\sigma_1\|_\rho, \dots, 
\gamma \|\sigma_1\|_\rho).
\end{equation}
The constant $\Gamma$ depends on $\gamma$ but not on the submultiplicative norm 
$\|\cdot\|_\rho$.
\end{lemma}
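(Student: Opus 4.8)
The plan is to regard the space of discrete functions on the finite index set ${\cal I}_n^\eps$, equipped with $\|\cdot\|_\rho$, as a commutative unital Banach algebra: submultiplicativity is exactly Lemma~\ref{lemPropNorm}(3), and a constant function $c$ satisfies $\|c\|_\rho=|c|$, so the unit has norm one. On such an algebra one can substitute the Taylor series of ${\cal F}$ ("\emph{holomorphic functional calculus}"), and the whole proof reduces to a Cauchy estimate for the Taylor coefficients of ${\cal F}$ combined with submultiplicativity. This is the same mechanism as in~\cite[Lemma~3.2]{Ma05}; only the bookkeeping with the weighted norm has to be redone, and the finite-dimensionality of the algebra makes all convergence questions trivial, so the content is purely the norm bound.

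Concretely, I would proceed as follows. Write the Taylor expansion ${\cal F}(s)=\sum_{\alpha\in\N_0^p}c_\alpha s^\alpha$, which converges on the open polydisc $\{|s_j|<A\}$ since ${\cal F}$ is analytic on $\{|s|\le A\}^p$. Abbreviate $r_j=\|\sigma_j\|_\rho$; by hypothesis $\gamma r_j\le\gamma\max_k r_k\le A$ for every $j$, and by the absolute bound (Lemma~\ref{lemPropNorm}(2)) one has $|\sigma_j(\xi)|\le r_j<A$ for all $\xi\in{\cal I}_n^\eps$, so ${\cal F}(\sigma)$ is pointwise well defined. If some $r_j=0$ then $\sigma_j\equiv0$ and that variable may simply be discarded, so assume all $r_j>0$. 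Then the distinguished boundary $\{|s_j|=\gamma r_j\}$ lies inside $\{|s|\le A\}^p$, and Cauchy's integral formula together with $|{\cal F}(s)|\le{\cal C}(|s_1|,\dots,|s_p|)$ and the monotonicity of ${\cal C}$ gives
\[
 |c_\alpha|\ \le\ \frac{{\cal C}(\gamma r_1,\dots,\gamma r_p)}{\prod_{j=1}^p(\gamma r_j)^{\alpha_j}} .
\]

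Next I would estimate, using submultiplicativity repeatedly,
\[
 \|{\cal F}(\sigma)\|_\rho\ \le\ \sum_{\alpha}|c_\alpha|\,\prod_{j=1}^p\|\sigma_j\|_\rho^{\alpha_j}
 \ =\ \sum_{\alpha}|c_\alpha|\,\prod_{j=1}^p r_j^{\alpha_j}
 \ \le\ {\cal C}(\gamma r_1,\dots,\gamma r_p)\sum_{\alpha\in\N_0^p}\prod_{j=1}^p\gamma^{-\alpha_j} ,
\]
and the last series factorises as $\prod_{j=1}^p\sum_{\alpha_j\ge0}\gamma^{-\alpha_j}=\bigl(\tfrac{\gamma}{\gamma-1}\bigr)^p=:\Gamma$, which is finite precisely because $\gamma>1$ and depends only on $\gamma$ and $p$ — not on $\eps$, on $\rho$, or on the weights defining $\|\cdot\|_\rho$. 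This is the asserted bound; absolute convergence of the series in norm also justifies a posteriori that $\sum_\alpha c_\alpha\sigma^\alpha$ agrees with ${\cal F}(\sigma)$ as a discrete function.

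I do not expect a genuine obstacle: the only points requiring care are (i) that the Cauchy estimate is legitimate, which is exactly why the hypothesis reads $\gamma\max_k\|\sigma_k\|_\rho\le A$ rather than merely $\|\sigma_k\|_\rho\le A$ — the factor $\gamma>1$ buys the room to inflate the polydisc; (ii) the harmless degenerate case $r_j=0$; and (iii) that strict inequality $\gamma>1$ is what makes the geometric factor $\Gamma$ finite. If one preferred $\Gamma$ to be independent of $p$, one could instead estimate on a single common radius (e.g.\ $\gamma\max_k r_k$) at the cost of a slightly larger but still norm-independent constant; either way the claim follows.
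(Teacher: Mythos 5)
Your proof is correct and is essentially the argument behind the cited result: the paper does not prove this lemma itself but refers to \cite[Lemma~3.2]{Ma05}, whose proof is precisely this combination of a Taylor expansion of ${\cal F}$, Cauchy estimates on the inflated polydisc of radii $\gamma\|\sigma_j\|_\rho$, submultiplicativity (plus $\|c\|_\rho=|c|$ for constants), and summation of the geometric series giving $\Gamma=\bigl(\tfrac{\gamma}{\gamma-1}\bigr)^p$. Your handling of the degenerate case $\|\sigma_j\|_\rho=0$ and the observation that the conclusion should read ${\cal C}(\gamma\|\sigma_1\|_\rho,\dots,\gamma\|\sigma_p\|_\rho)$ are both appropriate.
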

As shown in~\cite{Ma05} this also implies the two particular cases
\begin{align}
 \|{\cal F}(\sigma)\|_\rho &\leq \Gamma\cdot \sup_{|s_1|,\dots,|s_p| \leq A} 
|{\cal F}(s_1,\dots,s_p)|, \label{eq:estf}\\
 \|{\cal F}(\sigma^{(1)})- {\cal F}(\sigma^{(2)})\|_\rho &\leq \Gamma \cdot
\sup_{|s_1|,\dots,|s_p| 
\leq A} |{\cal F}'(s_1,\dots,s_p)| \|\sigma^{(1)}- \sigma^{(2)}\|_\rho. 
\label{eq:estdiff}
\end{align}

We will apply these estimates to the analytic function defined 
in~\eqref{eq:defRest}.

\subsection{Existence of a continuous solution of~\eqref{eq:evolvef}}\label{secExist}

Consider the discrete Cauchy problem consisting of the evolution 
equation~\eqref{eq:devolve2} for $F^\eps$ and $(\xi,\eta)\in \Diamond^\eps_r$, where $0<r<r_0$, 
together  with the initial values~\eqref{eq:init1} and~\eqref{eq:init2}.

\begin{rmk}\label{remAltIni}
Alternatively, we can use the following more symmetric choice as initial values
\begin{align}
 F^\eps(\xi,0)&= f(\xi,0) -\epsss(\Theta(\xi,0)\cdot \partial_\xi 
f(\xi,0) +2i\;\Xi(\xi,0) f(\xi,0)), \label{eq:init1b}\\
 F^\eps(\xi,\epss)&= f(\xi,0) + \epsss\left(\Theta(\xi,0)\cdot \partial_\xi 
f(\xi,0) +2i\;\Xi(\xi,0) f(\xi,0)\right). \label{eq:init2b}
\end{align}
\end{rmk}

\begin{lemma}\label{lemFeps}
 For a suitable $0<r< r_0/(5B)$ and $\eps$ small 
enough, discrete solutions to the above Cauchy problem exist on 
$\Diamond^\eps_r$ and a limiting function $f$ can be defined and solves~\eqref{eq:evolvef}.
\end{lemma}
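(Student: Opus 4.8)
The idea is a discrete Cauchy--Kovalevskaya iteration in the scale of Banach spaces $\big(\|\cdot\|_\rho\big)_{0<\rho<\rho_0}$ from Lemma~\ref{lemPropNorm}, following the scheme of~\cite[Section~3]{Ma05}. The two features that are new here and must be accommodated are that the coefficients $M^\eps,\Xi^\eps$ in~\eqref{eq:devolve2} are no longer constant but analytic functions of $\xi$ (built from the holomorphic extensions of $u',v'$), and that the equation carries the extra term $\eps^2\big(R(F^\eps;\xi+\epss,\eta,\eps)-R(F^\eps;\xi-\epss,\eta,\eps)\big)$. First I would recast~\eqref{eq:devolve2} as an explicit two-step recursion in the discrete time variable: solving for the top value,
\[
 F^\eps(\xi,\eta+\epss)=F^\eps(\xi,\eta-\epss)+\eps\Big(M^\eps(\xi,\eta)\,\delta_\xi F^\eps(\xi,\eta)+2i\,\Xi^\eps(\xi,\eta)\,I_\xi F^\eps(\xi,\eta)+\eps^2\big(R(F^\eps;\xi+\epss,\eta,\eps)-R(F^\eps;\xi-\epss,\eta,\eps)\big)\Big),
\]
and symmetrically for $\eta\le 0$. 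Together with the two data levels~\eqref{eq:init1}--\eqref{eq:init2} at $\eta=0,\epss$ this determines $F^\eps$ level by level for $\eta\in\epss\Z$, $|\eta|\le r$, as long as all arguments stay in the range where $R$ (hence the function $\Lo$) is defined, which is guaranteed by an a priori bound and $\eps$ small.

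The heart of the matter is that uniform a priori estimate: $\|F^\eps(\cdot,\eta)\|_{\rho(\eta)}\le C$ for all sufficiently small $\eps$ and all $|\eta|\le r$, where $\rho(\eta)=\rho_0-|\eta|$ shrinks linearly. Here is how I expect the terms to be controlled. Since $M^\eps=I_\xi\Theta^\eps$ and $\Xi^\eps=\tfrac1{2i}\delta_\xi\Theta^\eps$ are restrictions of functions analytic in $\xi$, the restriction estimate Lemma~\ref{lemPropNorm}(5) bounds $\|M^\eps\|_\rho$ and $\|\Xi^\eps\|_\rho$ by fixed multiples of $\sup_{\Diamond_{r_0}}|\Theta^\eps|\le B/5$, provided $B\rho$ stays below $r_0$; the conditions $B\ge 5\sup_{\Diamond_{r_0}}|\Theta^\eps|$ and $r<r_0/(5B)$ are exactly what keep $B\rho(\eta)$ well below $r_0$ over the whole evolution, so that in particular $\|M^\eps\|_\rho<B$. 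The transport term $\eps\,M^\eps\delta_\xi F^\eps(\cdot,\eta)$ is then absorbed into the radius loss via the magic inequality Lemma~\ref{lemPropNorm}(4): because $\|M^\eps\|_\rho/B<1$, the quantity $\eps\|M^\eps\delta_\xi F^\eps(\cdot,\eta)\|_\rho$ is dominated by a fixed fraction of $\|F^\eps(\cdot,\eta)\|_{\rho+\eps}-\|F^\eps(\cdot,\eta)\|_\rho$, and this telescoping gain is what pays for shrinking $\rho$ by $\eps$ per time step; the zeroth-order term $2i\eps\,\Xi^\eps I_\xi F^\eps$ contributes only a factor $1+\Or(\eps)$. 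Finally the remainder: by Remark~\ref{rem:TaylorF} one has $R(\ell_a,\ell_b,F;\eps)=\widehat\Lo(\ell_a,\ell_b,\eps F)\,F^3$ with $\widehat\Lo=\tfrac1{\mathcal H^3}\Lo$ analytic near $\mathcal H=0$, so as soon as $\eps\|F^\eps(\cdot,\eta)\|_\rho$ lies inside the disc of analyticity in the third slot --- automatic once the a priori bound is in force and $\eps$ is small --- Lemma~\ref{lemNorm2} together with~\eqref{eq:estf} and submultiplicativity give $\|\eps^2R(F^\eps;\cdot)\|_\rho=\Or(\eps^2)$, a genuine $\Or(\eps^2)$ perturbation that does not disturb the bound. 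The estimate is closed by a bootstrap over the time levels, using a Gronwall-type summation over the $\Or(r/\eps)$ steps as in~\cite{Ma05}: the bound holds at $\eta=0,\epss$ by the explicit data~\eqref{eq:init1}--\eqref{eq:init2}, and propagates with a per-step factor $1+\Or(\eps)$, yielding a uniform $C=C_0\,e^{\Or(r)}$ as long as $|\eta|\le r$. I expect this bookkeeping --- reconciling the linear shrinkage of the analyticity radius with the absorption of $\delta_\xi F^\eps$ through the magic inequality, while the non-constant coefficients and the new $\eps^2R$-term introduce nothing worse than $1+\Or(\eps)$ growth --- to be the only genuinely delicate step.

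With the uniform bound in hand, the analyticity estimate Lemma~\ref{lemPropNorm}(6) produces, along a subsequence $\eps\to0$, analytic limits of the interpolated difference quotients $L[\delta_\xi^k F^\eps(\cdot,\eta)]$ on each time level; since the right-hand side of the recursion depends continuously on the data, a diagonal argument yields a single function $f$ on $\Diamond_r$, analytic in $\xi$ and $C^1$ in $\eta$, which is the limit of $F^\eps$ in the relevant sense. Passing to the limit in~\eqref{eq:devolve2} term by term --- $\delta_\eta\to\partial_\eta$, $\delta_\xi\to\partial_\xi$, $I_\xi\to\mathrm{id}$, $M^\eps\to\Theta$, $\Xi^\eps\to\Xi$ by~\eqref{eq:defThetaeps}, and $\eps^2R\to0$ by Remark~\ref{rem:TaylorF} --- shows that $f$ solves~\eqref{eq:evolvef}; and since the data~\eqref{eq:init1}--\eqref{eq:init2} converge to the Cauchy data $f(\cdot,0)$, $\partial_\eta f(\cdot,0)$ of the function in~\eqref{eq:42}, uniqueness of analytic solutions of~\eqref{eq:evolvef} identifies the limit with that $f$, which is what the lemma asserts. (This also sets the stage for Theorem~\ref{theoConvF}: comparing $F^\eps$ with $f$ via the perturbed equation of Lemma~\ref{lemConsist} and rerunning the same norm estimates on $\Delta F=F^\eps-f$ upgrades the above to $C^\infty$-convergence, but for Lemma~\ref{lemFeps} the construction of the limit suffices.)
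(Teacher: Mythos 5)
Your proposal is correct and follows essentially the same route as the paper: a weighted induction over the discrete time levels in the scale of norms $\|\cdot\|_\rho$ with linearly shrinking radius, absorption of the transport term via the discrete Cauchy estimate, a Gronwall factor for the $\Xi^\eps$-term, control of the remainder through the analyticity of $\Lo$ and Lemma~\ref{lemNorm2}, and then compactness plus the analyticity estimate and uniqueness for~\eqref{eq:evolvef} to identify the limit. The only detail you gloss over is that the leapfrog recursion couples levels two steps apart, so the Arzel\`a--Ascoli argument must be run separately on the even and odd sublattices (yielding a priori two limits $f_e,f_o$), which are then shown equal to $f$ by exactly the uniqueness argument you invoke.
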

\begin{proof}
Let $F_n^\eps(\xi)$ be 
the restriction of the solution $F^\eps$ of~\eqref{eq:init1}--\eqref{eq:init2} or~\eqref{eq:init1b}--\eqref{eq:init2b} to $\eta=n\epss$ for $|n|\epss\leq r$, so 
$F_n^\eps(\xi)= F^\eps(\xi, n\epss)$, which is defined on ${\cal I}_m^\eps$, 
where $m$ is the largest integer with $m\epss\leq r-|n|\epss$.
We assume $0<\eps<r$ and further reduce it in the following.
We first show that
\begin{align}
&& {\cal D}_n^\eps &:= c_n (\|F_{n+1}^\eps\|_{\mu_n} +\|F_n^\eps\|_{\mu_n}) 
\leq A\left(\frac{1}{2}+\frac{|n|}{2N}\right), \label{eq:Dn} \\
\text{where} && \mu_n&:=4r-|n|\eps, \qquad N\epss \leq r, \qquad
c_n :=\left(1+ \frac{2r C_\Xi}{N}\right)^{N-n}\geq c_{n+1},  \label{eq:mun_cn}\\
\text{ and}&&
C_\Xi:&=\frac{r_0}{Br}\sup_{\Diamond_{r_0}} |2\;\Xi^\eps| \geq \|2i\cdot 
\Xi_n^\eps\|_{\mu_n} \quad \text{by Lemma~\ref{lemPropNorm}~(5)} . \notag
\end{align}

We proceed by induction over $n$. Using the analyticity of $f,u,v$ and thus of 
$\Theta$ and $\Xi$ on $D_{r_0}\supset [-5Br,5Br]$, we see with property~(5) of Lemma~\ref{lemPropNorm} that $c_0\| 
F_0^\eps\|_{\mu_0}\leq A/4$ and $c_0\|F_1^\eps\|_{\mu_0}\leq A/4$ for some 
suitable constant $A>0$. Without loss of generality, we additionally assume  
that $A>\max\{2\text{e}^{2rC_\Xi},r\}$. 

Recall that by Remark~\ref{rem:TaylorF} the function 
\begin{equation}
{\cal H}\mapsto \frac{1}{{\cal H}^3} 
\left[\log\left(\frac{\ell_bi\text{e}^{{\cal H}} +\ell_a}{\ell_b 
-\ell_a i\text{e}^{{\cal H}}}\right)- i\frac{\pi}{2} + {\cal H} 
\frac{\ell_a-i\ell_b}{\ell_a+i\ell_b}\right]
\end{equation}
is analytic in ${\cal H}$ in a neighborhood of zero with a removable singularity at ${\cal H}=0$. So, for $|{\cal H}|\leq \pi/4$ there 
exists a bound which depends only on $\ell_a/\ell_b$. As this 
dependence is smooth, we deduce that for
\begin{align*}
&\ell_a^\eps = \frac{1}{\eps}(u(\xi-\eta+\epss ) -u(\xi-\eta-\epss 
))\in [\inf u', \sup u'] \text{ and } \\
 & \ell_b^\eps = \frac{1}{\eps}(v(\xi+\eta+\epss) -v(\xi+\eta-\epss ))\in [\inf v', \sup v']
\end{align*}
there exists an upper bound ${\cal R}$ which is 
independent of $\ell_a/\ell_b$. 
As these estimates are independent of $\eps$, we choose $\eps<1/(8A^4\Gamma{\cal R})$ with $\Gamma$ from Lemma~\ref{lemNorm2} 

Now suppose that~\eqref{eq:Dn} holds for some $n\geq 0$. Then with~\eqref{eq:devolveFmn} and~\eqref{eq:devolve2}
\begin{align*}
 {\cal D}_{n+1}^\eps & = c_{n+1}\|F_{n+1}^\eps\|_{\mu_{n+1}} \\
&\ +c_{n+1}\|F_n^\eps +\eps M_{n+1}^\eps \delta_\xi F_{n+1}^\eps + 
2i\eps \Xi_{n+1}^\eps I_\xi F_{n+1}^\eps + \eps^3(R_{n+1}(F_{n+1}^\eps; 
\cdot+\epss)- R_{n+1}(F_{n+1}^\eps; \cdot-\epss))\|_{\mu_{n+1}} \\
&\leq c_{n+1}(\|F_{n+1}^\eps\|_{\mu_{n+1}} 
+\|F_n^\eps\|_{\mu_{n+1}} +\eps \|\Theta_{n+1}^\eps\|_{\mu_{n}} \|\delta_\xi 
F_{n+1}^\eps \|_{\mu_{n+1}} + 
\eps C_\Xi\|F_{n+1}^\eps\|_{\mu_{n+1}} \\
&\qquad +\eps^3\|R_{n+1}(F_{n+1}^\eps; 
\cdot+\epss)- R_{n+1}(F_{n+1}^\eps; \cdot-\epss)\|_{\mu_{n+1}}).
\end{align*}
As $B> \|\Theta_{n+1}^\eps\|_{\mu_{n+1}}$, we obtain by~\eqref{eq:magic} (or 
property~(4) of Lemma~\ref{lemPropNorm})
\begin{equation*}
 \|F_{n+1}^\eps\|_{\mu_{n+1}} +\eps \|\Theta_{n+1}^\eps\|_{\mu_{n}} 
\|\delta_\xi F_{n+1}^\eps \|_{\mu_{n+1}}  \leq   
\|F_{n+1}^\eps\|_{\mu_{n+1}+\eps}.
\end{equation*}
Applying property~(1) of Lemma~\ref{lemPropNorm}, \eqref{eq:mun_cn} 
and~\eqref{eq:estf}, we obtain
\begin{align*}
 {\cal D}_{n+1}^\eps & \leq c_{n}\|F_n^\eps\|_{\mu_{n}} 
+ \underbrace{c_{n+1}(1+\eps C_\Xi)}_{\leq c_n} \|F_{n+1}^\eps\|_{\mu_{n}} 
+\underbrace{2\eps c_{n+1}A^3}_{\leq 2A^4\eps}\cdot \underbrace{\eps^2 
\Gamma{\cal R}}_{\leq 1/(8A^4)} \\
&\leq  {\cal D}_{n}^\eps +\frac{A}{2N}.
\end{align*}
This shows the claim for small enough $\eps>0$.

Estimate~\eqref{eq:Dn} shows in fact that
\begin{equation}\label{eq:estFn}
 \|F_n^\eps\|_{\mu_n}\leq A.
\end{equation}
This implies in particular that discrete solutions for the above Cauchy problem exist on $\Diamond^\eps_r$ and for all appropriate $n$ we have
\begin{align}
|F_n^\eps|&\leq A, \label{eq:estFn2} \\
 |\delta_\xi F_n^\eps| &\leq \frac{1}{B\mu_n} \|F_n^\eps\|_{\mu_n} \leq 
\frac{A}{2rB}, \label{eq:estdeltaxi} \\
|\delta_\eta F_n^\eps| &\leq |M_{n+1}^\eps||\delta_\xi F_n^\eps| + 
 |2i\cdot\Xi_{n+1}^\eps||I_\xi F_{n+1}^\eps| + \eps^2|R_{n+1}(F_{n+1}^\eps; 
\cdot+\epss)- R_{n+1}(F_{n+1}^\eps; \cdot-\epss)| \notag \\
&\leq (\frac{1}{2r}+C_\Xi) A +\eps^2\Gamma{\cal R}, \label{eq:estdeltaeta} \\
 |\delta_\xi^2 F_n^\eps| &\leq \frac{2}{B^2\mu_n^2} \|F_n^\eps\|_{\mu_n} \leq 
\frac{A}{2r^2B^2}.\label{eq:estdeltaxi2}
\end{align}

For every $\eps$ let $F^\eps_{even}$ be the restriction of $F^\eps$ to points 
$(\xi,\eta)\in\Diamond^\eps_r$ with ``even'' second coordinate 
$\eta=(2k)\epss$. This allows us to define a family of continuous functions 
$\hat{F}^\eps_{even}$ which are $\xi$-$\eta$-linear interpolations of 
$F^\eps_{even}$ on $\Diamond_r$. Estimates~\eqref{eq:estdeltaxi} 
and~\eqref{eq:estdeltaeta} show that this family is equicontinuous. Thus by the 
Arzel\`a-Ascoli theorem, there is a sequence $\eps'\to 0$ such that 
$\hat{F}^{\eps'}_{even}$ converges uniformly on $\Diamond_r$ to a continuous 
limit $f_e$. Using~\eqref{eq:estdeltaxi2}, we may choose the sequence $\eps'$ 
such that $\delta_\xi \hat{F}_{even}^{\eps'}$ converges uniformly to 
$\partial_\xi f_e$.
The same procedure may be applied to the restriction of $F^\eps$ to points 
$(\xi,\eta)\in\Diamond^\eps_r$ with ``odd'' second coordinate 
$\eta=(2k+1)\epss$. Passing to a suitable subsequence $\eps''$ of $\eps'$, we 
obtain that $\hat{F}^{\eps''}_{odd}\to f_o$ and $\delta_\xi 
\hat{F}_{odd}^{\eps''}\to \partial_\xi f_o$.
Observe, that $\Theta^{\eps"}\to \Theta$, $\Xi^{\eps"}\to \Xi$, and  
$(\eps'')^2 R(\hat{F}^{\eps"};\xi\pm\epss,\eta,\eps'')\to 0$.

As $F^\eps$ solves~\eqref{eq:devolve2}, we have for all 
$(\xi,\eta), (\xi,\tilde{\eta})\in\Diamond_r$ that
\begin{align*}
 \hat{F}^{\eps''}_{even}(\xi,\tilde{\eta})= 
\hat{F}^{\eps''}_{even}(\xi,{\eta}) + \int_\eta^{\tilde{\eta}} 
(&\Theta^{\eps"}(\xi,s) \delta_\xi \hat{F}_{even}^{\eps"}(\xi,s) + 
2i\cdot\Xi^{\eps"}(\xi,s) I_\xi \hat{F}_{even}^{\eps"}(\xi,s) \\
&+ (\eps'')^2 (R(\hat{F}_{even}^{\eps''}; \xi+\epss'',s,\eps'')- 
R(\hat{F}_{even}^{\eps"}; \xi-\epss'',s,\eps''))) ds +\Or(\eps'').
\end{align*}
In the limit $\eps''\to 0$ we therefore obtain
\begin{equation*}
 f_e(\xi,\tilde{\eta})= f_e(\xi,{\eta}) + \int_\eta^{\tilde{\eta}} 
(\Theta(\xi,s) \partial_\xi f_e(\xi,s) + 2i\cdot\Xi(\xi,s) f_e(\xi,s)) ds 
\end{equation*}
Of course, $\hat{F}^{\eps''}_{odd}$ and $f_o$ satisfy analogous equations. 
Consequently, $f_e$ and $f_o$ are differentiable in $\eta$ and satisfy
\begin{equation}\label{eq:feo}
 \partial_\eta f_I= \Theta\cdot \partial_\xi f_I +2i\cdot\Xi\cdot f_I,
\end{equation}
where $I\in\{e,o\}$.

Now estimate~\eqref{eq:estFn2} together with property~(6) of 
Lemma~\ref{lemPropNorm} imply that for fixed $\eta\in[-r,r]$ the functions 
$f_{e/o}(\eta,\xi)$ extend $\xi$-analytically to $\{\xi\in\C : 
\text{dist}(\xi,[-(r-|\eta|),r-|\eta|])\leq 4r-2|\eta| \}$ where they are 
bounded by $A$. Note that the analytic continuations also solve~\eqref{eq:feo}. 
So $f_e$ and $f_o$ are smooth with respect to $\eta$, as any $\eta$-derivative 
can be expressed in terms of $\xi$-derivatives and compositions with analytic 
functions. Finally, we deduce that $f_e$ and $f_o$ are the unique solution 
of~\eqref{eq:feo} with initial condition $f_e(\xi,0)=f_o(\xi,0)=f(\xi,0)$ and 
therefore satisfies $f_e=f_o=f$ on $\Diamond_r$.
\end{proof}

\subsection{Approximation in $C^0$}\label{secApprox}
Let $f^\eps$ be the restriction of the smooth solution $f$ 
of~\eqref{eq:evolvef} to $\Diamond^\eps_r$ and let $W^\eps=F^\eps-f^\eps$ 
denote the deviation of $F^\eps$ from $f$. 

\begin{lemma}\label{lemAbsBound}
 The difference $W^\eps=F^\eps-f^\eps$ is abolutely bounded on $\Diamond^\eps_r$  by $C \eps^2$, where the constant $C$ only depends on $f$, $r$ and $\Xi$.
\end{lemma}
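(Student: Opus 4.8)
\emph{Plan.} The plan is to follow \cite[Section~3]{Ma05} and re-run the inductive scheme of Lemma~\ref{lemFeps}, now carrying an extra factor $\eps^2$. I would set $w_n^\eps(\xi):=W^\eps(\xi,n\epss)=F^\eps(\xi,n\epss)-f(\xi,n\epss)$, defined on ${\cal I}_m^\eps$ for the largest $m$ with $m\epss\le r-|n|\epss$, and work with the initial conditions \eqref{eq:init1}--\eqref{eq:init2}, so that $w_0^\eps\equiv0$. Using that \eqref{eq:evolvef} gives $\partial_\eta f=\Theta\,\partial_\xi f+2i\,\Xi f$, a Taylor expansion in $\eta$ with integral remainder shows that $w_1^\eps$ is analytic in $\xi$ with $|w_1^\eps(\xi)|\le\tfrac{\eps^2}{8}\sup_{\Diamond_{r_0/2}}|\partial_\eta^2 f|$; then the restriction estimate (Lemma~\ref{lemPropNorm}~(5)) yields $\|w_1^\eps\|_{\mu_0}\le C_1\eps^2$ with $\mu_0=4r$, after possibly shrinking $r$.

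\emph{Size of the source.} By Lemma~\ref{lemConsist} (with $\Delta F=W^\eps$), $w^\eps$ satisfies
\begin{equation*}
  \delta_\eta w_n^\eps = M^\eps\delta_\xi w_n^\eps + 2i\,\Xi^\eps I_\xi w_n^\eps + \eps^2\big(R(W^\eps;\xi+\epss,n\epss,\eps)-R(W^\eps;\xi-\epss,n\epss,\eps)\big) + \eps\,{\cal S}_n,
\end{equation*}
and, crucially, the derivation of that lemma actually gives $\|{\cal S}_n\|_{\mu_n}=\Or(\eps)$, not merely boundedness: the central differences $\delta_\eta,\delta_\xi$ and the mean value $I_\xi$ are second order accurate, one has $M^\eps-\Theta=\Or(\eps^2)$ and $\Xi^\eps-\Xi=\Or(\eps^2)$, and the discrepancy between $R(F^\eps;\cdot)$ and $R(W^\eps;\cdot)$ is $\Or(1)$ by the Lipschitz bound behind Remark~\ref{rem:TaylorF}. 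Writing this as a leapfrog step,
\begin{equation*}
  w_{n+1}^\eps = w_{n-1}^\eps + \eps M^\eps\delta_\xi w_n^\eps + 2i\eps\,\Xi^\eps I_\xi w_n^\eps + \eps^3\big(R(W^\eps;\xi+\epss,n\epss,\eps)-R(W^\eps;\xi-\epss,n\epss,\eps)\big) + \eps^2{\cal S}_n,
\end{equation*}
I would note that the last two terms have combined norm $\le C_2\eps^3$ for an $\eps$-independent $C_2$: for the $R$-term apply Lemma~\ref{lemNorm2} and \eqref{eq:estf} to $\widehat{\Lo}$ (which has a removable singularity, cf.\ Remark~\ref{rem:TaylorF}), legitimate since $\|w_n^\eps\|_{\mu_n}\le\|F_n^\eps\|_{\mu_n}+\|f\|_{\mu_n}\le 2A$ by \eqref{eq:estFn} and analyticity of $f$; for the ${\cal S}_n$-term use $\|{\cal S}_n\|=\Or(\eps)$.

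\emph{Induction and conclusion.} Exactly as in the proof of Lemma~\ref{lemFeps}, I would set $\mu_n=4r-|n|\eps$, $c_n=(1+2rC_\Xi/N)^{N-n}$ (with $N\epss\le r$), and ${\cal E}_n^\eps:=c_n(\|w_{n+1}^\eps\|_{\mu_n}+\|w_n^\eps\|_{\mu_n})$, and prove ${\cal E}_n^\eps\le C\eps^2\big(\tfrac{1}{2}+\tfrac{|n|}{2N}\big)$ by induction on $n$. The base case holds since $w_0^\eps\equiv0$ and $c_0\|w_1^\eps\|_{\mu_0}\le c_0C_1\eps^2\le C\eps^2/2$ once $C\ge2c_0C_1$; in the inductive step the computation of Lemma~\ref{lemFeps} carries over verbatim: the magic inequality \eqref{eq:magic} (using $\|M^\eps\|\le\sup|\Theta^\eps|\le B$) absorbs $\eps M^\eps\delta_\xi w_{n+1}^\eps$ by enlarging the radius from $\mu_{n+1}$ to $\mu_{n+1}+\eps=\mu_n$; the bound $\|2i\,\Xi^\eps I_\xi w_{n+1}^\eps\|\le C_\Xi\|w_{n+1}^\eps\|$ produces a factor $1+\eps C_\Xi$ swallowed by $c_{n+1}(1+\eps C_\Xi)\le c_n$; and the two leapfrog remainders contribute $c_{n+1}C_2\eps^3\le C\eps^2/(2N)$ once $C\ge4rc_0C_2$ (using $N\epss\le r$). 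Hence ${\cal E}_{n+1}^\eps\le{\cal E}_n^\eps+C\eps^2/(2N)$, so ${\cal E}_n^\eps\le C\eps^2$ and in particular $\|w_n^\eps\|_{\mu_n}\le C\eps^2$. Since $(\xi,n\epss)\in\Diamond^\eps_r$ forces $|\xi|\le r-|n|\epss\le\mu_n\le B\mu_n$, the absolute bound (Lemma~\ref{lemPropNorm}~(2), together with the analytic extension in~(6)) then gives $|W^\eps(\xi,n\epss)|=|w_n^\eps(\xi)|\le C\eps^2$ throughout $\Diamond^\eps_r$, with $C$ depending only on $f,r,\Xi$ (and the fixed data $u,v,B$).

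\emph{Main obstacle.} The two points that go beyond \cite{Ma05} are (i) getting the consistency residual at the sharp order $\eps^2$ — this is why the proof of Lemma~\ref{lemConsist} must be read as delivering ${\cal S}_n=\Or(\eps)$, which rests on the second order accuracy of the symmetric operators and the $\Or(\eps^2)$ agreement of $M^\eps,\Xi^\eps$ with $\Theta,\Xi$ coming from Taylor expansions of $u,v$ in the coordinate change $p$ — and (ii) pushing the now position-dependent factors $M^\eps,\Xi^\eps$ through all the norm estimates, replacing the constants of \cite{Ma05} by their suprema over $\Diamond_{r_0}$. By contrast the nonlinear remainder $R$ is harmless, being uniformly bounded by Remark~\ref{rem:TaylorF}, so the factor $\eps^3$ in the recursion makes it enter only through $C$. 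I expect the bookkeeping in (i) — tracking all $\Or(\eps^2)$ truncation errors under the coordinate change — to be the main technical work.
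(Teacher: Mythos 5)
Your proposal is correct and follows essentially the same route as the paper: the same weighted analytic norms, the same two-term Gronwall quantity ${\cal L}_n$ with shrinking radii, the magic/Cauchy estimate to absorb the $M^\eps\delta_\xi$ term, the factor $(1+\eps C_\Xi)$ for the $\Xi^\eps$ term, an $\Or(\eps^3)$ per-step contribution from the nonlinear remainder and the consistency residual of $f$, and the $\Or(\eps^2)$ initial bound from \eqref{eq:init1}--\eqref{eq:init2}. The only difference is cosmetic: you route the source term through Lemma~\ref{lemConsist} with $\Delta F=W^\eps$ (correctly observing that its derivation yields ${\cal S}=\Or(\eps)$), whereas the paper keeps the remainder expressed in $F^\eps$ and bounds the $f$-residual directly by $Q^*\eps^2$.
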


\begin{proof}
Let $\rho_n=\mu_n-r=3r-|n|\eps$. Using estimate~(2) of Lemma~\ref{lemPropNorm}, it suffices to bound $\|W^\eps_{n}\|_{\rho_n}$.
In the following, we will calculate $\eps$-independent bounds on
\begin{equation}\label{eq:Ln}
 {\cal L}_n :=\|W^\eps_{n+1}\|_{\rho_n} + \|W^\eps_{n}\|_{\rho_n},
\end{equation}
which is defined for $\frac{\eps}{3}|n|\leq r$. Again, only $n\geq 0$ is considered here 
and the case $n\leq 0$ is left to the reader. In particular, it will be shown 
that
\begin{equation}\label{eq:Ln+1}
 {\cal L}_{n+1} \leq (1+\eps C_\Xi){\cal L}_{n} +(P^*+Q^*)\eps^3 \qquad 
\text{and}\qquad {\cal L}_0\leq T^*\eps^2.
\end{equation}
By the standard Gronwall estimate this leads to
\begin{equation}
 {\cal L}_n\leq (P^*+Q^*+T^*)\text{e}^{rC_\Xi} \; \eps^2.
\end{equation}

First, observe that $W_0^\eps\equiv 0$ and $\|W_1^\eps\|_{3r}\leq T^*\eps^2$ 
for some constant $T^*$ by our initial conditions~\eqref{eq:init1} 
and~\eqref{eq:init2}. In particular, due to~\eqref{eq:evolvef} and 
the analyticity of $f$ we have
\begin{equation*}
|f(\xi,0) + \frac{\eps}{2}\partial_\eta f(\xi,0)
-f(\xi,\epss)| \leq \frac{\eps^2}{8}\sup_{0\leq s\leq \epss}|\partial_\eta^2 
f(\xi,s)|.
\end{equation*}
Similar estimates $\|W_0^\eps\|_{3r}\leq \hat{T}^*\eps^2$ and $\|W_1^\eps\|_{3r}\leq \hat{T}^*\eps^2$ hold for the alternative initial conditions in Remark~\ref{remAltIni}.

Let $n\geq 1$. We use the same ideas for the estimates as for the proofs of Lemma~\ref{lemConsist} and Lemma~\ref{lemFeps}. 
\begin{align}
 {\cal L}_{n+1} &\leq \|W_{n+1}^\eps\|_{\rho_{n+1}} 
+\|W_n^\eps\|_{\rho_{n+1}} +\eps \|\Theta_{n+1}^\eps\|_{\rho_{n}} 
\|\delta_\xi W_{n+1}^\eps \|_{\rho_{n+1}} + 
\eps C_\Xi\|W_{n+1}^\eps\|_{\rho_{n+1}} \label{eq:LA} \\
&\quad +\eps^3\|R_{n+1}(F_{n+1}^\eps; 
\cdot+\epss)- R_{n+1}(F_{n+1}^\eps; \cdot-\epss) \|_{\rho_{n+1}} \label{eq:LB}\\
&\quad + \eps \|(\delta_\eta f^\eps)_{n+1} - M^\eps_{n+1} \delta_\xi 
f_{n+1}^\eps - 2i\cdot\Xi^\eps_{n+1} \cdot I_\xi f^\eps_{n+1} \|_{\rho_{n+1}}. 
\label{eq:LC}
\end{align}
We estimate the expressions in~\eqref{eq:LA}, \eqref{eq:LB}, and~\eqref{eq:LC} 
separately. 

First observe that by properties~(4) and~(1) of 
Lemma~\ref{lemPropNorm} and using $\rho_{n+1}+\eps=\rho_n$ we have
\begin{align*}
 &\|W_{n+1}^\eps\|_{\rho_{n+1}} 
+\|W_n^\eps\|_{\rho_{n+1}} +\eps \|\Theta_{n+1}^\eps\|_{\rho_{n}} 
\|\delta_\xi W_{n+1}^\eps \|_{\rho_{n+1}} + 
\eps C_\Xi\|W_{n+1}^\eps\|_{\rho_{n+1}} \\
&\leq \|W_n^\eps\|_{\rho_{n}}
+ \|W_{n+1}^\eps\|_{\rho_{n+1}+\eps} +\eps C_\Xi\|W_{n+1}^\eps\|_{\rho_{n}}\\
&\leq (1+\eps C_\Xi) {\cal L}_n.
\end{align*}

As in Section~\ref{secExist}, we deduce from~\eqref{eq:estf} and~\eqref{eq:estFn} that   
\begin{align*}
\|R_{n+1}(F_{n+1}^\eps; 
\cdot+\epss)- R_{n+1}(F_{n+1}^\eps; \cdot-\epss) \|_{\rho_{n+1}}
\leq 2A^3\Gamma {\cal R}=: P^*.
\end{align*}

Finally, the estimates in the proof of Lemma~\ref{lemConsist} and 
property~(5) of Lemma~\ref{lemPropNorm} show that~\eqref{eq:LC} is bounded,
\begin{equation*}
  \|(\delta_\eta f^\eps)_{n+1} - M^\eps_{n+1} \delta_\xi 
f_{n+1}^\eps - 2i\cdot\Xi^\eps_{n+1} \cdot I_\xi f^\eps_{n+1} \|_{\rho_{n+1}}
\leq Q^* \eps^2,
\end{equation*}
where the constant $Q^*$ only depends on $f$ and some $\rho'>2r$.

This shows that~\eqref{eq:Ln+1} holds and finishes the proof.
\end{proof}

\subsection{Smooth convergence}
Given a function $W$ on $\Diamond_r^\eps$, higher difference quotients 
$\delta_\eta^k\delta_\xi^m W$ are defined on the sublattice
\begin{equation*}
 \Diamond_r^{\eps, k+m}=\begin{cases} \Diamond^\eps_{r-(k+m)\epss} &\text{if } 
k+m \text{ is even},\\[2ex]
\{(\xi,\eta)\in\Diamond_{r-(k+m+1)\epss} : \epss+\xi+\eta\in\eps \Z\} &\text{if 
} k+m \text{ is odd} \end{cases}
\end{equation*}

The goal of this section is to prove that $F^\eps$ converges to $f$ in $C^\infty$, i.e.\ with all discrete partial derivatives. Together with the Arzela-Ascoli theorem and the result of the previous section, this is a consequence of the following lemma.
\begin{lemma}
Let $f$ be the smooth solution of~\eqref{eq:evolvef} and let $F^\eps$ be the corresponding discrete solution from Lemma~\ref{lemFeps}.
For $k,m\geq 0$ there are constants  $C_{k,m}>0$ such that
\begin{equation}\label{eq:estCinfty}
 \sup_{\Diamond_r^{\eps, k+m}}|\partial_\eta^k\partial_\xi^m f 
-\delta_\eta^k\delta_\xi^m F^\eps|\leq C_{k,m} \eps^2.
\end{equation}
\end{lemma}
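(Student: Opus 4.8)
The proof proceeds by induction on the total order $k+m$, in complete analogy with the $C^0$-estimate of Lemma~\ref{lemAbsBound}. The base case $k+m=0$ is precisely that lemma. For the inductive step, I would first reduce the mixed derivative $\delta_\eta^k\delta_\xi^m$ to the case of pure $\eta$-derivatives: because $\delta_\xi$ commutes with everything and $\delta_\xi$-estimates are controlled by the norm $\|\cdot\|_\rho$ via the discrete Cauchy estimate (property~(4) of Lemma~\ref{lemPropNorm}), it suffices to bound $\|\delta_\eta^k W^\eps_n\|_{\rho}$ on a slightly shrunk domain, losing a fixed amount of $\rho$ for each derivative taken. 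The essential point is thus to control the pure time-derivatives $\delta_\eta^k W^\eps$.

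\textbf{The evolution equation for higher derivatives.} Applying $\delta_\eta^{k-1}$ to the defining evolution equation~\eqref{eq:devolve2} and using the discrete Leibniz rule, one obtains that $V^\eps:=\delta_\eta^{k-1}F^\eps$ satisfies an equation of the same structure as~\eqref{eq:devolve2}, namely
\begin{equation*}
 \delta_\eta V^\eps = M^\eps\,\delta_\xi V^\eps + 2i\,\Xi^\eps\, I_\xi V^\eps + (\text{lower-order terms in }\delta_\eta) + \eps^2(\text{remainder}),
\end{equation*}
where the ``lower-order terms'' involve $\delta_\eta^j F^\eps$ for $j<k-1$ together with discrete $\eta$-derivatives of $M^\eps,\Xi^\eps$ and of the $R$-remainder, all of which are uniformly bounded (by the analyticity of $u,v$ and Remark~\ref{rem:TaylorF}). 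The corresponding smooth function $\partial_\eta^{k-1}f$ satisfies the analogous smooth equation obtained by differentiating~\eqref{eq:evolvef}. Subtracting, and writing $W^{\eps,(k-1)}:=\delta_\eta^{k-1}F^\eps-\partial_\eta^{k-1}f$, one gets an equation for $W^{\eps,(k-1)}$ whose inhomogeneity consists of: (a) a consistency error of order $\eps^2$ coming from Taylor-expanding the discrete operators against the smooth ones, exactly as in Lemma~\ref{lemConsist}; (b) terms linear in the already-controlled lower-order differences $W^{\eps,(j)}$, $j<k-1$, which by the inductive hypothesis are $\Or(\eps^2)$; and (c) the bounded $R$-type remainder carrying an explicit $\eps^2$. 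One then runs the same Gronwall argument in the norms $\|\cdot\|_{\rho_n}$ as in the proof of Lemma~\ref{lemAbsBound}: define ${\cal L}_n^{(k)}=\|W^{\eps,(k-1)}_{n+1}\|_{\rho_n}+\|W^{\eps,(k-1)}_{n}\|_{\rho_n}$ with $\rho_n$ shrinking linearly in $n$ (and with a further fixed amount of room subtracted to accommodate the $k+m$ derivatives), establish ${\cal L}_{n+1}^{(k)}\le(1+\eps C)\,{\cal L}_n^{(k)}+C'\eps^3$ with ${\cal L}_0^{(k)}=\Or(\eps^2)$ from the initial data, and conclude ${\cal L}_n^{(k)}=\Or(\eps^2)$. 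The absolute bound (property~(2) of Lemma~\ref{lemPropNorm}) then gives~\eqref{eq:estCinfty} for pure $\eta$-derivatives, and the discrete Cauchy estimate upgrades this to arbitrary mixed $\delta_\eta^k\delta_\xi^m$.

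\textbf{Main obstacle.} The bookkeeping in the discrete Leibniz expansion of $\delta_\eta^{k-1}$ applied to the nonlinear remainder $\eps^2 R(F^\eps;\cdot)$ is the delicate part: one must verify that every resulting term is either genuinely $\Or(\eps^2)$ or is a bounded multiple of a lower-order difference $W^{\eps,(j)}$ that the induction already handles, and that no term secretly contains an undifferenced high-order quantity. This is where the structure noted in Remark~\ref{rem:TaylorF} — that $\widehat{\Lo}(\ell_a,\ell_b,\cal H)=\cal H^{-3}\Lo$ is analytic with a removable singularity and that $F^\eps$ is uniformly bounded with all its discrete derivatives (from the iterated analyticity estimate, property~(6) of Lemma~\ref{lemPropNorm}) — does the heavy lifting: composing the analytic function $\widehat{\Lo}$ with bounded discrete data keeps everything bounded in the $\|\cdot\|_\rho$-norms via Lemma~\ref{lemNorm2}. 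A secondary but purely clerical nuisance is tracking the even/odd parity of $k+m$ through the sublattices $\Diamond_r^{\eps,k+m}$; this affects only where the interpolation/Arzelà–Ascoli step is applied and does not change the estimates.
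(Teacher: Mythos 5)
Your strategy is essentially the one the paper uses: reduce to bounding $W^\eps=F^\eps-f^\eps$, let the analytic-type norms $\|\cdot\|_\rho$ deliver all $\xi$-derivatives for free via the discrete Cauchy estimate, and obtain $\eta$-derivatives by feeding the evolution equation~\eqref{eq:devolve2} back into itself, with the remainder controlled through the removable-singularity structure of $\Lo$ and Lemma~\ref{lemNorm2}. The difference is one of packaging. The paper does not differentiate the equation $k-1$ times and re-run Gronwall at each order; instead it introduces the combined norm $\|\cdot\|^{(N)}_\nu$ (summing all $\xi$-difference quotients and $\eta$-difference quotients up to order $N$), and the inductive step $N\to N+1$ consists of a single substitution of~\eqref{eq:devolve2} to convert the one new $\delta_\eta$ into $\delta_\xi$ plus zeroth-order terms, paying only a shrinkage of the radius $\nu_N=\nu_0/(C_M(N+1))$; the Gronwall-type propagation in $\eta$ is done once and for all in the $C^0$ step (Lemma~\ref{lemAbsBound}). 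Your repeated Gronwall is therefore redundant but not wrong: once $\|\delta_\eta^{k-1}W^\eps_n\|_\rho=\Or(\eps^2)$ is known at some positive radius, the differentiated equation gives $\|\delta_\eta^{k}W^\eps_n\|_{\rho'}=\Or(\eps^2)$ directly at a smaller radius, with no further iteration in $n$.

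Two points to fix in your write-up. First, the uniform bound on $F^\eps$ \emph{together with its discrete $\eta$-derivatives}, which you need in order to apply Lemma~\ref{lemNorm2} to the Leibniz expansion of the $R$-term, does not come from property~(6) of Lemma~\ref{lemPropNorm} (that is the compactness/analyticity-of-the-limit statement used for Arzel\`a--Ascoli). It comes from the a priori bound $\|F^\eps_n\|_{\mu_n}\le A$ of Section~\ref{secExist} (which controls only $\xi$-derivatives) combined with either your own induction hypothesis ($\delta_\eta^jF^\eps=\delta_\eta^jf^\eps+\delta_\eta^jW^{\eps}$ with the second term already $\Or(\eps^2)$) or, as the paper does, a separate parallel induction establishing $\|F^\eps\|^{(N)}_{\nu_N}\le \frac{A}{4}\exp(\sum_n \nu_0(2\hat C_\Xi+1)/(n+1)^2)$. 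Second, your claim that ${\cal L}_0^{(k)}=\Or(\eps^2)$ ``from the initial data'' needs a word of justification for $k\ge1$: the quantities $\delta_\eta^{k-1}W^\eps$ near $\eta=0$ involve several evolved layers, so their smallness is itself a consequence of the equation plus the lower-order estimates, not of~\eqref{eq:init1}--\eqref{eq:init2} alone. Both are minor and repairable within your scheme.
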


Since $f,u,v$ are smooth on $\Diamond_r$, we may interchange partial 
derivatives and difference quotients with an error of order $\Or(\eps^2)$, see~\cite[Lemma~5.5]{Ma05} for a proof:
\begin{equation*}
 \sup_{\Diamond_r^{\eps, k+m}}|\partial_\eta^k\partial_\xi^m f 
-\delta_\eta^k\delta_\xi^m f^\eps|\leq C_{k,m}^* \eps^2.
\end{equation*}
Thus, for proving~\eqref{eq:estCinfty} it is sufficient to show that for all 
$k,m\geq 0$ we have
\begin{equation}\label{eq:estsmooth}
  \sup_{\Diamond_r^{\eps, k+m}}| \delta_\eta^k\delta_\xi^m W^\eps|\leq 
C_{k,m}' \eps^2,
\end{equation}
where $W^\eps=F^\eps-f^\eps$ 
denotes the deviation of $F^\eps$ from $f$ as in the previous section. 

To this end, we introduce further submultiplicative norms for functions 
$W:\Diamond_r^\eps\to\R$ by
\begin{equation*}
 \|W\|_\nu^{(N)} =\sum\limits_{n=0}^N 
\sum\limits_{m=1}^{\lfloor 2r/\eps \rfloor-n} 
\frac{(\widetilde{B} \nu)^{m+n}}{m!n!} \sup_{(\xi,\eta)\in \Diamond_r^{\eps, 
m+n}} 
|\delta_\eta^n\delta_\xi^m W(\xi,\eta)|.
\end{equation*}
Note that Lemma~\ref{lemNorm2} still applies and also~\eqref{eq:estf} and the 
discrete Cauchy estimate (property~(4) of Lemma~\ref{lemPropNorm}) still hold. 
The restriction estimate (property~(5) of Lemma~\ref{lemPropNorm}) reads as 
follows:
If $\tilde{w}$ is a smooth function such that $\tilde{w}(\cdot,\eta)$ is 
analytic on $B_\rho([-\rho',\rho'])= \{ \xi\in\C : 
\text{dist}(\xi,[-\rho',\rho']) \leq\rho\}$ for all $\eta\in[-r,r]$, the for 
$0<\nu<\rho$ there holds
\begin{equation}\label{eq:restrict2}
 \|\tilde{w}^\eps\|_\nu^{(N)} \leq {\cal C} \max_{n\leq N} \sup_{|\eta|\leq r} 
\sup_{\xi\in B_\rho([-\rho',\rho'])} |\partial_\eta^n \tilde{w}(\xi,\eta)|,
\end{equation}
where $\tilde{w}^\eps$ denotes the restriction of $\tilde{w}$ to 
$\Diamond_r^\eps$ and $\cal C$ depends on the ratio $\nu/\rho$.

We take as constant $\widetilde{B}=C_M^2$, where 
\begin{equation*}
 C_M= \sup_{\eps>0} \sum\limits_{n=0}^\infty 
\sum\limits_{m=1}^{\lfloor 2r/\eps \rfloor-n} \
\frac{r^{m+n}}{m!n!} \sup_{(\xi,\eta)\in \Diamond_r^{\eps, 
m+n}} |\delta_\eta^n\delta_\xi^m M^\eps (\xi,\eta)|.
\end{equation*}
This constant is finite as $M^\eps$ is analytic. For further use, we 
define the constant
\begin{equation*}
 \hat{C}_\Xi= \sup_{\eps>0} \sum\limits_{n=0}^\infty
\sum\limits_{m=1}^{\lfloor 2r/\eps \rfloor-n} \
\frac{(\hat{r}C_M)^{m+n}}{m!n!} \sup_{(\xi,\eta)\in \Diamond_r^{\eps, 
m+n}} |\delta_\eta^n\delta_\xi^m \Xi^\eps (\xi,\eta)|,
\end{equation*}
which is well-defined for some suitable $0<\hat{r}\leq r$ as $\Xi^\eps$ is also 
analytic. 

We will show inductively that 
\begin{equation}\label{eq:estwN}
 \|{W}^\eps\|_{\nu_N}^{(N)} \leq C_N \eps^2
\end{equation}
for suitable constants $C_N$, where $\nu_N=\frac{\nu_0}{C_M(N+1)}$ and 
$0<\nu_0<\hat{r}/C_M$ is chosen later. This estimate 
implies~\eqref{eq:estsmooth} with 
$C_{k,m}'=C_N m!k!(\widetilde{B}\nu_N)^{-k-m}$ and suitable $N$.

For $N=0$, inequality~\eqref{eq:estwN} follows from Section~\ref{secApprox} as 
$\nu_0<r\leq \rho_n$. Now assume that~\eqref{eq:estwN} holds for some $N\geq 
0$. Then we estimate similarly as in the previous section, using the properties of the submultiplicative norms and the discrete Cauchy estimate:
\begin{align*}
 \|{W}^\eps\|_{\nu_{N+1}}^{(N+1)} &\leq \|W^\eps\|_{\nu_{N+1}}^{(N)} +
\frac{C_M \nu_{N+1}}{N+1} \|\delta_\eta W^\eps\|_{\nu_{N+1}}^{(N)} \\
&\leq  \underbrace{\|W^\eps\|_{\nu_{N+1}}^{(N)} + \frac{\nu_0}{(N+1)(N+2)} 
\|M^\eps \delta_\xi W^\eps\|_{\nu_{N+1}}^{(N)}}_{\leq   
\|W^\eps\|_{\nu_{N}}^{(N)}} 
 + \underbrace{\frac{\nu_0}{(N+1)(N+2)} \|2i\;\Xi^\eps I_\xi 
W^\eps\|_{\nu_{N+1}}^{(N)}}_{\leq \frac{2\nu_0\hat{C}_\Xi}{(N+1)(N+2)}
\|W^\eps\|_{\nu_{N}}^{(N)}}\\
&\quad +\frac{\nu_0}{(N+1)(N+2)} \|M^\eps \delta_\xi f^\eps +2i\;\Xi^\eps 
I_\xi f^\eps -\delta_\eta f^\eps \|_{\nu_{N+1}}^{(N)} \\
&\quad +\frac{\nu_0}{(N+1)(N+2)} \eps^2 \|R(F^\eps; 
\cdot+\epss,\eta,\eps)- R(F^\eps; \cdot-\epss,\eta,\eps)\|_{\nu_{N+1}}^{(N)}.
\end{align*}
The functions
\begin{align*}
 A_1^\eps &= M^\eps \delta_\xi f^\eps - \Theta \partial_\xi f, &\qquad
 A_2^\eps &= 2\Xi^\eps \delta_\xi f^\eps - 2\Xi \partial_\xi f, &\qquad
 A_3^\eps &= \delta_\eta f^\eps - \partial_\eta f \\
\end{align*}
are all analytic for every $\eps>0$ and of order $\Or(\eps^2)$, see the proof of Lemma~\ref{lemConsist}. The restriction estimate~\eqref{eq:restrict2}
implies that 
\begin{equation*}
 \|M^\eps \delta_\xi f^\eps +2i\Xi^\eps 
I_\xi f^\eps -\delta_\eta f^\eps \|_{\nu_{N}}^{(N)} \leq \eps^2 {\cal C} 
\max_{n\leq N} \sup_{|\eta|+|\xi|\leq r} (|\partial_\eta^n A_1^\eps|+ 
|\partial_\eta^n A_2^\eps|+ |\partial_\eta^n A_3^\eps|) \leq \widehat{C}_N \eps^2.
\end{equation*}
Furthermore, the restriction estimate~\eqref{eq:restrict2} and Lemma~\ref{lemNorm2} imply that
\begin{equation*}
\|R(F^\eps; \cdot+\epss,\eta,\eps)- R(F^\eps; 
\cdot-\epss,\eta,\eps)\|_{\nu_{N+1}}^{(N)} \leq \Gamma A^3 {\cal C} C_R
\end{equation*}
for some constant $C_R$. In order to justify the application of 
Lemma~\ref{lemNorm2}, we need to guarantee that $\|F^\eps\|_{\nu_N}^{(N)}\leq 
A$ for suitable (possibly diminished) $r$.

Choose  $\nu_0<\hat{r}/C_M$ satisfying additionally $\nu_0 (2\hat{C}_\Xi +1) 
\sum_{n=0}^\infty 1/(n+1)^2 \leq 1$. Furthermore, we choose $\eps>0$ small 
enough such that $\eps^2 \Gamma A^3  {\cal C} C_R <A/4$. We will show by induction that  
\begin{equation*}
\|F^\eps\|_{\nu_{N}}^{(N)} \leq \frac{A}{4} \text{exp}\left(\sum_{n=0}^{N-1} 
\frac{\nu_0 (2\hat{C}_\Xi +1)}{(n+1)^2}\right)
\end{equation*}
holds for all $N\geq 0$. By our results in Section~\ref{secExist} $\|F^\eps\|_{\nu_{0}}^{(0)}\leq A/4$ holds. Estimating along the same lines as above, we obtain that
\begin{align*}
 \|F^\eps\|_{\nu_{N+1}}^{(N+1)} 
&\leq  \underbrace{\|F^\eps\|_{\nu_{N+1}}^{(N)} + \frac{\nu_0}{(N+1)(N+2)} 
\|M^\eps \delta_\xi F^\eps\|_{\nu_{N+1}}^{(N)} +\frac{\nu_0}{(N+1)(N+2)} 
\|2i\Xi^\eps I_\xi F^\eps\|_{\nu_{N+1}}^{(N)}}_{\leq   
\left(1+ \frac{2\nu_0\hat{C}_\Xi}{(N+1)(N+2)}\right) \|F^\eps\|_{\nu_{N}}^{(N)}} \\
&\quad +\frac{\nu_0}{(N+1)(N+2)} \underbrace{\eps^2 \|R(F^\eps; 
\cdot+\epss,\eta,\eps)- R(F^\eps; \cdot-\epss,\eta,\eps)\|_{\nu_{N+1}}^{(N)}}_{ 
\leq  A/4}\\
&\leq  \frac{A}{4} \text{exp}\left(\sum_{n=0}^{N-1} 
\frac{\nu_0 (2\hat{C}_\Xi +1)}{(n+1)^2}\right)
(1+\frac{2\nu_0\hat{C}_\Xi}{(N+1)(N+2)} + \frac{\nu_0}{(N+1)(N+2)} ) \\
&\leq \frac{A}{4} \text{exp}\left(\sum_{n=0}^{N-1} 
\frac{\nu_0 (2\hat{C}_\Xi +1)}{(n+1)^2}\right)
(1+\frac{\nu_0(2\hat{C}_\Xi+1)}{(N+1)(N+2)}) \leq \frac{A}{4} 
\text{exp}\left(\sum_{n=0}^{N} \frac{\nu_0 (2\hat{C}_\Xi +1)}{(n+1)^2} \right),
\end{align*}
where we have applied the induction hypothesis.

\section{Convergence of the CR-mappings $G$ and  of the discrete minimal surfaces from Bj\"orling data}\label{secConvMinimal}

Given the approximation of $f$ by $F^\eps$, we deduce the convergence of the related geometric notions $\alpha$ and $\beta$ which finally shows the convergence of the CR-mapping $G_{m,n}$ for suitable initial conditions. This directly shows the approximation of the smooth solution to the Björling problem by the discrete minimal surfaces obtained from $G_{m,n}$.

\subsection{Convergence of the discrete derivatives $\alpha$ and 
$\beta$}\label{secConvalphabeta}
The proofs in Sections~\ref{secCR} and~\ref{secConvF} show that given an analytic function $f$ on 
a parametrized curve $\gamma:(-a,a)\to\Omega$ as detailed in 
Section~\ref{secDeriv}, we can locally approximate the values of $f$ (and its derivatives) using a function $F^\eps$ on a suitable lattice. 
In the following, we deduce the convergence for the auxiliary functions $\alpha$ and $\beta$, introduced in Section~\ref{secDeriv}, to the given function $g'$.

\subsubsection{Convergence of $\log\alpha$ to $\log g'$}
Recall that by~\eqref{eq:defQ} and with our ansatz $Q=\text{e}^{\eps F}$ we can write
\begin{align*}
 F_{m-\halb,n+\halb}=\frac{1}{\eps} \log Q_{m-\halb,n+\halb}
&= -\frac{1}{\eps} \left[ \log\frac{G_{m,n+1}-G_{m,n}}{p_{m,n+1}-p_{m,n}}
-\log\frac{G_{m,n}-G_{m-1,n}}{p_{m,n}-p_{m-1,n}} \right] \\
&=-\frac{1}{\eps} \left[\log \alpha_{m,n+\halb} -\log \beta_{m-\halb,n}\right],
\end{align*}
where we now associate the values of $\alpha$ and $\beta$ to the edge-midpoint $(m,n+\scriptstyle\halb \textstyle )$ and $(m-\scriptstyle\halb \textstyle ,n)$ resp. Using the identity
\begin{align*}
\text{e}^{\eps F_{m-\halb,n+\halb}}= Q_{m-\halb,n+\halb} &= 
\frac{\beta_{m-\halb,n}}{\alpha_{m,n+\halb}} 
\overset{\eqref{eq:Q2}}{=} 
\frac{\alpha_{m-1,n+\halb}}{\beta_{m-\halb,n+1}} 
\end{align*}
 we deduce that
\begin{align}
 \delta_\xi^\eps \log\alpha(m-{\textstyle\halb},n) &= \frac{1}{\eps} \left( 
\log\alpha_{m,n+\halb} -\log\beta_{m-\halb,n} 
+\log\beta_{m-\halb,n} -\log\alpha_{m-1,n-\halb}\right)\notag \\ 
&= -F_{m-\halb,n+\halb} -F_{m-\halb,n-\halb}.\label{eq:Falpha}
\end{align}
Here we use discrete partial derivatives 
\begin{equation*}
 \delta_\xi^\eps H (m-{\textstyle\halb},n) = \frac{1}{\eps}(H_{m,n+\halb} -H_{m-1,n-\halb})
\end{equation*}
of a discrete function $H$, which is --- like $\alpha$ --- defined on the midpoints of the 'vertical' edges of the grid $\Omega^\eps$.
Extending the relation between $\alpha$ and $F$, we can define $\delta_\xi^\eps \log\alpha$ on the shifted double cone 
$\ddiamond^{{\eps}}_r=\{(\xi,\eta)\,|\, (\xi- \epsss,\eta- \epsss) \in \Diamond^\eps_r \wedge (\xi+ \epsss,\eta+ \epsss) \in \Diamond^\eps_r\}$ 
by
\begin{equation}\label{eq:alphaFeps}
 \delta_\xi^\eps \log\alpha(\xi,\eta)= -F^\eps(\xi+\epsss,\eta+\epsss) -F^\eps(\xi-\epsss,\eta-\epsss),
\end{equation}
Starting from a given initial value, we also may consider $\alpha$ as a function on $\ddiamond^{{\eps}}_r$. Using this continuation $\alpha^\eps$, we can now show its smooth convergence.
Set $h(\xi,\eta):= \log g'\circ p(\xi,\eta)= \log g'\big(u(\xi-\eta)+iv(\xi+\eta)\big)$, so $f= 
-\frac{1}{2} \partial_\xi h$, and denote its restriction to $\ddiamond^{{\eps}}_r$ 
by $h^\eps$. Then 
\begin{align*}
 \delta_\xi (\log\alpha^\eps-  h^\eps) (\xi,\eta) 
=&
 -F^\eps(\xi+\epsss,\eta+\epsss) -F^\eps(\xi-\epsss,\eta-\epsss) + 
f^\eps(\xi+\epsss,\eta+\epsss) + f^\eps(\xi-\epsss,\eta-\epsss) \\
 & - \delta_\xi^\eps h^\eps (\xi,\eta) + [\partial_\xi h]^\eps (\xi,\eta) \\
& +\frac{1}{2} ([\partial_\xi h]^\eps(\xi+\epsss,\eta+\epsss) + 
[\partial_\xi h]^\eps(\xi-\epsss,\eta-\epsss)) -[\partial_\xi h]^\eps (\xi,\eta)
\end{align*}
As the functions $- \delta_\xi h^\eps  + [\partial_\xi h]^\eps$ and $\frac{1}{2} 
([\partial_\xi h]^\eps(\xi+\epsss,\eta+\epsss) + 
[\partial_\xi h]^\eps(\xi-\epsss,\eta-\epsss)) -[\partial_\xi 
h]^\eps(\xi,\eta)$ are analytic in $\xi$ and $\eta$, we deduce similarly as 
above from the restriction estimate that
\begin{align*}
 &\|\delta_\xi h^\eps - [\partial_\xi h]^\eps \|^{(N)}_1\leq C_{1,N}\eps^2, \\
& \|\frac{1}{2} ([\partial_\xi h]^\eps(\cdot+\epsss,\cdot+\epsss) + 
[\partial_\xi h]^\eps(\cdot-\epsss,\cdot-\epsss)) -[\partial_\xi h]^\eps 
\|^{(N)}_1\leq C_{2,N}\eps^2.
\end{align*}
Together with~\eqref{eq:estwN}, this implies that $\|\delta_\xi 
(\log\alpha^\eps- h^\eps)\|^{(N)}_1\leq C_{3,N}\eps^2$ for some constants $C_{3,N}$.

From~\eqref{eqalpha-} and~\eqref{eqalpha+} we deduce with an analogous definition of $\delta_\eta^\eps$ that 
\begin{align*}
 \delta_\eta^\eps \log\alpha(m-{\textstyle\halb},n) &= -(M_{m-1,n}+2i\eps\Xi_{m-1,n})F_{m-\halb,n+\halb} - (M_{m-1,n-1}+2i\eps\Xi_{m-1,n-1}) F_{m-\halb,n-\halb} \\
 &\quad +\eps^2(R_{m-\halb,n+\halb}(F;\eps) + R_{m-\halb,n-\halb}(F;\eps) ).
\end{align*}
Using the continuation $\alpha^\eps$ and $h^\eps$ as above, we obtain
\begin{align*}
& \delta_\eta (\log\alpha^\eps- h^\eps) (\xi,\eta) \notag \\
&\quad = -\frac{1}{2}\left( \Theta^\eps(\xi+\epsss,\eta+\epsss) + 
\Theta^\eps(\xi-\epsss,\eta-\epsss) \right) (F^\eps(\xi+\epsss,\eta+\epsss) 
+F^\eps(\xi-\epsss,\eta-\epsss)) -\delta_\eta h^\eps(\xi,\eta) \label{eq:detah} 
\\
&\qquad -\frac{1}{2}\left( \Theta^\eps(\xi+\epsss,\eta+\epsss) - 
\Theta^\eps(\xi-\epsss,\eta-\epsss) \right) (F^\eps(\xi+\epsss,\eta+\epsss) 
-F^\eps(\xi-\epsss,\eta-\epsss)) \notag\\
&\qquad +\eps^2 (R(F^\eps;\xi+\epsss,\eta+\epsss,\eps) +
R(F^\eps;\xi-\epsss,\eta-\epsss,\eps)),\notag
\end{align*}
where we have used the definitions of $\Theta^\eps$ by~\eqref{eq:defThetaeps} 
and of $R$ by~\eqref{eq:defR}. We estimate the terms in the above three lines separately.
The results of Section~\ref{secApprox} imply that 
\begin{align*}
\|R(F^\eps;\xi+\epsss,\eta+\epsss,\eps) +
R(F^\eps;\xi-\epsss,\eta-\epsss,\eps)\|^{(N)}_1&\leq C_{4,N} \\
\text{and }\quad \|F^\eps(\xi+\epsss,\eta+\epsss) -F^\eps(\xi-\epsss,\eta-\epsss)|^{(N)}_1 
& \leq \|F^\eps(\xi+\epsss,\eta+\epsss) - 
f^\eps(\xi+\epsss,\eta+\epsss)\|^{(N)}_1 \\
&\qquad + \|F^\eps(\xi-\epsss,\eta-\epsss) 
-f^\eps(\xi-\epsss,\eta-\epsss)\|^{(N)}_1 \\
&\qquad + \|f^\eps(\xi+\epsss,\eta+\epsss) 
-f^\eps(\xi-\epsss,\eta-\epsss)\|^{(N)}_1 \\
&\leq C_{5,N}\eps
\end{align*}
for some constants $C_{4,N}$ and $C_{5,N}$.
As $\Theta^\eps$ is analytic in $\xi$ and $\eta$, we deduce that 
\begin{equation*}
\|\Theta^\eps(\xi+\epsss,\eta+\epsss) -\Theta^\eps(\xi-\epsss,\eta-\epsss) 
\|^{(N)}_1\leq C_{6,N}\eps.
\end{equation*}
In order to estimate the term in the first line above, observe that $h$ satisfies the evolution equation
\begin{equation}\label{eq:evolveh}
\partial_\eta h = \Theta\cdot\partial_\xi h.
\end{equation}
Thus, $\partial_\eta h = -2\Theta\cdot f$. 
Using the analyticity of $\Theta^\eps$, $\Theta$, $f$ and $h$ and 
the estimates derived in Section~\ref{secApprox} we finally deduce by similar estimates that
\begin{align*}
 &\|\frac{1}{2}\left( \Theta^\eps(\xi+\epsss,\eta+\epsss) + 
\Theta^\eps(\xi-\epsss,\eta-\epsss) \right) (F^\eps(\xi+\epsss,\eta+\epsss) 
+F^\eps(\xi-\epsss,\eta-\epsss)) +\delta_\eta h^\eps(\xi,\eta)\|^{(N)}_1\\
&\leq C_{7,N}\eps^2.
\end{align*}
Combining all previous 
estimates shows that $\|\delta_\eta (\log\alpha^\eps- h^\eps)\|^{(N)}_1 
\leq C_{8,N}\eps^2$ for some constants $C_{8,N}$.

For an estimate on $\log\alpha^\eps- h^\eps$, we need another ingredient. 
\begin{lemma}[{\cite[Lemma~5.4]{Ma05}}]
 Let $W:\Diamond_r^\eps \to\C$ and $N\geq 0$. Then
\begin{equation*}
 \|W\|^{(N+1)}_1 \leq C_r(|W(z_0)|+ \|\partial_\xi W\|^{(N)}_1+ \|\partial_\eta 
W\|^{(N)}_1),
\end{equation*}
where $z_0\in\Diamond_r^\eps$ is an arbitrary point.
\end{lemma}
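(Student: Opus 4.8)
This is \cite[Lemma~5.4]{Ma05}, and the proof carries over essentially unchanged; the only adaptation is that the relevant domain $\Diamond_r^\eps$ is a time-discretized double cone rather than a rectangle, which matters only at one point. Throughout I write $\|\cdot\|^{(N)}$ for $\|\cdot\|^{(N)}_1$, and on the right-hand side I read $\partial_\xi W$, $\partial_\eta W$ as the difference quotients $\delta_\xi W$, $\delta_\eta W$, matching the way the lemma is invoked. The plan is to split the finite sum defining $\|W\|^{(N+1)}$ into the unique summand carrying no difference quotient, namely $\sup_{\Diamond_r^\eps}|W|$, and the summands $\frac{\widetilde B^{\,m+n}}{m!\,n!}\sup_{\Diamond_r^{\eps,m+n}}|\delta_\eta^n\delta_\xi^m W|$ with $m+n\ge1$; the latter get absorbed into $\|\delta_\xi W\|^{(N)}$ and $\|\delta_\eta W\|^{(N)}$ by peeling off one difference quotient, and the former is controlled by $|W(z_0)|$ together with the two first-order quantities via a discrete fundamental theorem of calculus.

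For a summand with $n\ge1$ I would write $\delta_\eta^n\delta_\xi^m W=\delta_\eta^{n-1}\delta_\xi^m(\delta_\eta W)$; this lives on the same shifted cone $\Diamond_r^{\eps,m+n}$ that appears for the multi-index $(m,n-1)$ of $\delta_\eta W$, and $\frac{\widetilde B^{\,m+n}}{m!\,n!}=\frac{\widetilde B}{n}\cdot\frac{\widetilde B^{\,m+(n-1)}}{m!\,(n-1)!}$ with $\widetilde B/n\le\widetilde B$. Since $n-1$ ranges over $\{0,\dots,N\}$, no level above $N$ is used, so summing over all such $(m,n)$ gives a contribution $\le\widetilde B\,\|\delta_\eta W\|^{(N)}$. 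Symmetrically, the summands with $n=0$, $m\ge1$ are rewritten as $\delta_\xi^{m-1}(\delta_\xi W)$ and, using $\frac{\widetilde B^{\,m}}{m!}=\frac{\widetilde B}{m}\cdot\frac{\widetilde B^{\,m-1}}{(m-1)!}$, their total is $\le\widetilde B\,\|\delta_\xi W\|^{(N)}$. What one has to verify here is purely formal: after relabelling, the shifted cones $\Diamond_r^{\eps,k}$ coincide and the upper summation limits of the two norms are compatible, both immediate from the definitions.

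It remains to bound the single summand $\sup_{\Diamond_r^\eps}|W|$. Here I would join an arbitrary $z\in\Diamond_r^\eps$ to the fixed base point $z_0$ by a monotone $L$-shaped lattice path that first moves in $\eta$ along the $\epss$-lattice and then in $\xi$, staying inside $\Diamond_r^\eps$ with total displacement $\Or(r)$ in each coordinate, and telescope $W$ along it; this gives $|W(z)-W(z_0)|\le C_r\big(\sup_{\Diamond_r^\eps}|\delta_\xi W|+\sup_{\Diamond_r^\eps}|\delta_\eta W|\big)$, where for the $\xi$-increment one uses the smoothness of $W$ in $\xi$ and that $\delta_\xi W$ approximates $\partial_\xi W$ up to an $\Or(\eps^2)$ error, cf.\ \cite[Lemma~5.5]{Ma05}. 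Since $\sup_{\Diamond_r^\eps}|\delta_\xi W|$ and $\sup_{\Diamond_r^\eps}|\delta_\eta W|$ are exactly the zeroth-order summands of $\|\delta_\xi W\|^{(N)}$ and $\|\delta_\eta W\|^{(N)}$, we obtain $\sup_{\Diamond_r^\eps}|W|\le|W(z_0)|+C_r\big(\|\delta_\xi W\|^{(N)}+\|\delta_\eta W\|^{(N)}\big)$. Adding the three contributions and absorbing $\widetilde B$ and the path constant into a single $C_r$ completes the proof. The main obstacle is not analysis but bookkeeping: one must be sure that every summand of $\|W\|^{(N+1)}$ is used exactly once and that peeling a difference quotient never raises the level beyond $N$ --- which is precisely why the statement trades one order of $N$ for the two first-order norms --- while the double-cone shape of $\Diamond_r^\eps$ enters only through the choice of connecting path and is otherwise irrelevant.
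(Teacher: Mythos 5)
The paper gives no proof of this lemma---it is imported verbatim from \cite[Lemma~5.4]{Ma05}---so your reconstruction can only be judged on its own merits and against that source. Your second paragraph is exactly the argument of \cite{Ma05} and is correct: peeling one difference quotient off every summand with $m+n\ge 1$, re-indexing $(m,n)\mapsto(m,n-1)$ resp.\ $(m,0)\mapsto(m-1,0)$, and paying a factor $\widetilde B$ absorbs all higher-order terms into $\|\delta_\xi W\|^{(N)}_1+\|\delta_\eta W\|^{(N)}_1$ without ever exceeding level $N$. Note only that you silently read the inner sum in the definition of $\|\cdot\|^{(N)}_\nu$ as starting at $m=0$ rather than the printed $m=1$; that is surely the intended reading (otherwise the summand $\sup|W|$ does not occur and the $|W(z_0)|$ term in the lemma would be pointless), but the discrepancy deserves a remark.

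The genuine gap is in your control of the zeroth-order summand $\sup_{\Diamond_r^\eps}|W|$. First, $W$ is an arbitrary map $\Diamond_r^\eps\to\C$: no smoothness in $\xi$ is hypothesized, so the appeal to ``smoothness of $W$'' and to the $\Or(\eps^2)$ consistency of $\delta_\xi$ with $\partial_\xi$ (that is \cite[Lemma~5.5]{Ma05}, which concerns smooth functions) is not available here. Second, and more seriously, $\delta_\xi$ and $\delta_\eta$ are central differences of span $\eps$, whereas in this paper $\xi$ ranges over a continuum and the $\eta$-levels are spaced $\epss$ apart; a telescoping path whose increments are multiples of $\eps\,\delta_\xi W$ and $\eps\,\delta_\eta W$ therefore reaches from $z_0=(\xi_0,\eta_0)$ only the coarse sublattice $(\xi_0+\eps\Z)\times(\eta_0+\eps\Z)$ and cannot reach a generic point of $\Diamond_r^\eps$. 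Indeed, with the $m=0$ term included the inequality is false for arbitrary $W$: take $W$ $\eps$-periodic in $\xi$ and independent of $\eta$, so that every difference quotient vanishes while $\sup|W|$ is as large as you please. In \cite{Ma05} this problem does not arise because the functions live on a genuinely discrete light-cone lattice generated by $z_0$; in the present semi-discrete setting one must either restrict the supremum to that sublattice or invoke the additional structure the applications actually provide ($W=\log\alpha^\eps-h^\eps$ etc.\ is analytic in $\xi$ and prescribed on both $\eta$-parities). As written, your L-shaped path argument is the one step that would fail.
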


By this lemma we only need one initial value
$\alpha^\eps(\xi_0,\epsss)$ (which we will obtain from two initial values for 
$G_{m,n}$ in the next section), which is sufficiently close to $h(\xi_0,\epsss)$, 
that is $|\alpha^\eps(\xi_0,\epsss)-h(\xi_0,\epsss)| \leq C_{9}\eps^2$. 
This then implies
\begin{equation}\label{eq:estlogalpha}
 \|\log\alpha^\eps- h^\eps\|^{(N)}_1 \leq C_{10,N}\eps^2.
\end{equation}

\subsubsection{Convergence of $\log\beta$ to $\log g'$}
Similar reasoning as in the case of $\alpha$ applies to the consideration of $\log\beta$ and we can derive similar estimates as for the discrete derivatives of 
$\log\alpha^\eps-h^\eps$ for those of $\log\beta^\eps 
-h^\eps$. Alternatively, notice that with the above notations
\begin{align*}
 \log\beta^\eps(\xi,\eta)- h^\eps(\xi,\eta) &\stackrel{\eqref{eq:defQ}}{=} 
 \log\alpha^\eps(\xi+\epss,\eta)+\eps F^\eps(\xi+\epsss,\eta+\epsss)
 -h^\eps(\xi,\eta) \\
&= \log\alpha^\eps(\xi+\epss,\eta) -h^\eps(\xi+\epss,\eta) \\
&\quad + h^\eps(\xi+\epss,\eta) -h^\eps(\xi,\eta) -\epss [\partial_\xi 
h]^\eps(\xi+\epsss,\eta) \\
&\quad +\epss ([\partial_\xi 
h]^\eps(\xi+\epsss,\eta) -[\partial_\xi h]^\eps(\xi+\epsss,\eta+\epsss)) \\
&\quad +\eps (-f^\eps (\xi+\epsss,\eta+\epsss)+ 
F^\eps(\xi+\epsss,\eta+\epsss)). 
\end{align*}
Again we estimate the terms in the different lines separately. By the results of Section~\ref{secApprox} we have $\|f^\eps 
(\xi+\epsss,\eta+\epsss)- F^\eps(\xi+\epsss,\eta+\epsss)\|^{(N)}_1  \leq 
C_{9,N}\eps^2$. As above, the analyticity of $h$ implies that 
$\|h^\eps(\xi+\epss,\eta) -h^\eps(\xi,\eta) -\epss [\partial_\xi 
h]^\eps(\xi+\epsss,\eta)\|^{(N)}_1  \leq C_{10,N}\eps^2$ and $\|[\partial_\xi 
h]^\eps(\xi+\epsss,\eta) -[\partial_\xi 
h]^\eps(\xi+\epsss,\eta+\epsss)\|^{(N)}_1  \leq C_{11,N}\eps$. Therefore, we 
can deduce from~\eqref{eq:estlogalpha} that
\begin{equation}\label{eq:estlogbeta}
 \|\log\beta^\eps- h^\eps\|^{(N)}_1 \leq C_{11,N}\eps^2.
\end{equation}

\subsubsection{Convergence of $\alpha$ and $\beta$ to $g'$}
As $h=\log g'$, an application of~\eqref{eq:estdiff} to~\eqref{eq:estlogalpha} 
and~\eqref{eq:estlogbeta} shows that 
\begin{align}\label{eq:estalphabeta}
 \|\alpha^\eps- g' \|^{(N)}_1  &\leq C_{12,N}\eps^2,  &
\|\beta^\eps -g' \|^{(N)}_1  &\leq C_{12,N}\eps^2 
\end{align}
for a suitable initial value $\alpha^\eps(\xi_0,\epsss)$. This means in particular, that the difference quotients
\begin{equation*}
 \frac{G_{m,n+1}-G_{m,n}}{p_{m,n+1}-p_{m,n}}\qquad
\text{ and }\qquad
\frac{G_{m,n}-G_{m-1,n}}{p_{m,n}-p_{m-1,n}}, 
\end{equation*} 
defined on the midpoints of the edges of $\Omega^\eps$, approximate in $C^\infty$ the values of $g'$.

\subsection{Cauchy data for $G_{m,n}$ obtained from $\phi$, $G_0$ and one value of $\No_0$}\label{secConst2}

Having prepared our ingredients in the previous sections, we now sum up, how to
obtain our promised approximation results of Theorems~\ref{theo:MinimalConv} and~\ref{theo:ConvG}.

We start from Björling data as explained in the beginning of Section~\ref{secConstruction} and use mostly values of the auxialary function $f$ and its derivative. This consitutes an alternative approach instead of the construction presented in Section~\ref{secConstruction}.

In summary, our construction amounts to the following. See Figure~\ref{fig:initial} for a schematic sketch of the initial data detailed in the steps below.
\begin{enumerate}[(i)]
 \item 
Thanks to the relations detailed in Section~\ref{secPhi}, we can pass from the original functions $\F_0$ and $\No_0$ to the maps $g$ and $\phi=u+iv$, where $\phi$ gives rise to a coordinate transformation $p$, see~\eqref{eq:p1}. 
By assumption, these functions possess analytic extensions and allow to define the auxiliary function $f=-\frac{1}{2}\partial_\xi \log g'\circ p$, see~\eqref{eq:42}. 
\item We now choose a parameter $\eps$ and determine the rectangular lattice $p_{m,n}$ from $\phi$, see Section~\ref{secDisHolo}. 
\item
 We fix two initial values for $G_{m,n}$, namely
\begin{align}
 G_{0,0}& := g(\phi(0))= G_0(0) = \sigma\circ\No_0(0) \label{eq:initG1} \\[1ex]
G_{0,1}&
:= g(\phi(0))+i(v(\eps)-v(0)) g'(\phi(0)) + \frac{1}{2}i^2(v(\eps)-v(0))^2 g''(\phi(0)) \label{eq:initG2}  
\end{align}
Only here we use our assumption, that $0$ is contained in the domain of $\phi$. Of course, this may easily be adapted for arbitrary starting points $t_0$. 
Then we obtain from~\eqref{eq:defalphabeta} the value
\begin{equation}\label{eq:initalpha}
\alpha^\eps(\epsss,\epsss)= \frac{G_{0,1}- G_{0,0}}{i(v(\eps)- 
v(0))} = g'(p(\epsss,\epsss)) +\Or(\eps^2).
\end{equation}
So this is a suitable initial value such that~\eqref{eq:defalphabeta} holds because, due to 
the analyticity, $g'$ may be approximated by central differences in $C^\infty$ with an error of order $\eps^2$. 
\item
Note in particular that given the initial value $\alpha^\eps(\epsss,\epsss)$ as 
above, all values of $\alpha^\eps$ and $\beta^\eps$ on the initial 'zig-zag' are determined from $F^\eps$ and their relations to $Q^\eps$ as detailed in the previous section. The relevant values of $F^\eps$ in turn are just the initial values~\eqref{eq:init1} and~\eqref{eq:init2}. Rewriting these initial values in terms of  $\phi$ and $G_0=g\circ \phi$ as
\begin{align}
 F^\eps(t,0)&= f(t,0)=-\frac{1}{4} \frac{d}{dt} \log \frac{\dot{G}_0^2}{\dot{\phi}^2},\label{eq:init1l}\\
 F^\eps(t,\epss)&= -\frac{1}{4} \frac{d}{dt} \log \frac{\dot{G}_0^2}{\dot{\phi}^2} -
 \frac{\eps}{8}  \frac{d}{dt}\left(\frac{\overline{\dot\phi}}{\dot\phi} \cdot \frac{d}{dt} \log \frac{\dot{G}_0^2}{\dot{\phi}^2}\right), \label{eq:init2l}
\end{align}
we observe that they only depend on the given Björling data as detailed in Section~\ref{secPhi}. 

From this initial data for $F^\eps$ we obtain the values of $\alpha^\eps$ on the initial 'zig-zag' from~\eqref{eq:Falpha} and \eqref{eq:initalpha}. 
\item
By~\eqref{eq:defQ} and \eqref{eq:defalphabeta} we can finally deduce from~\eqref{eq:initG1}--\eqref{eq:initG2} initial values for $G^\eps$. 
\end{enumerate}

In this way, we have generated Cauchy data from which we determine our solution $G_{m,n}$ of the cross-ratio equation~\eqref{eq:devolve02}.
Additionally, we deduce from our construction that this solution $G_{m,n}$
approximates $g$ in $C^\infty$ with an error of order $\eps^2$. This finishes the proof of Theorem~\ref{theo:ConvG}.

\begin{figure}
\begin{tikzpicture}[rotate=-45,scale=0.75]
    \def\xmin{0}
    \def\xmax{5}
    \def\ymin{0}
    \def\ymax{5}
    \def\xnum{10}
    \def\ynum{10}
    \draw[help lines]
      \foreach \i in {1, ..., \ynum} {
        (\xmin, {\ymin + 0.095*(\i)*(\ymax - \ymin)})
        -- ++(\xmax - \xmin, 0)
      }
      \foreach \i in {1, ..., \xnum} {
({\xmin + 0.095*(\i)*(\xmax - \xmin)}, \ymin)
        -- ++(0, \ymax - \ymin)
      }
    ;
          \draw[->, >=stealth'] (-0.1,-0.1)  -- (5.4, 5.4) node(tline)[above right]
        {$t$};
        \draw[->, >=stealth'] (2.2,-2.4)  -- (-2.5, 2.3) node(etaline)[left]
        {$\eta$};
        
\draw [thick, blue] (0.095*5,0.095*5) -- (0.095*50,0.095*50);
\draw [thick, blue] (0.095*5,0.095*2*5) -- (0.095*45,0.095*50);
\draw [thick, red] (0.095*5*5,0.095*5*5) -- (0.095*5*5,0.095*30);
\draw [thick,dashed, red!50] (0.095*5*4,0.095*5*4) -- (0.095*5*4,0.095*5*5);
\draw [thick,dashed, red!50] (0.095*5*3,0.095*5*3) -- (0.095*5*3,0.095*5*4);
\draw [thick,dashed, red!50] (0.095*5*2,0.095*5*2) -- (0.095*5*2,0.095*5*3);
\draw [thick,dashed, red!50] (0.095*5*1,0.095*5*1) -- (0.095*5*1,0.095*5*2);
\draw [thick,dashed, red!50] (0.095*5*6,0.095*5*6) -- (0.095*5*6,0.095*5*7);
\draw [thick,dashed, red!50] (0.095*5*7,0.095*5*7) -- (0.095*5*7,0.095*5*8);
\draw [thick,dashed, red!50] (0.095*5*8,0.095*5*8) -- (0.095*5*8,0.095*5*9);
\draw [thick,dashed, red!50] (0.095*5*9,0.095*5*9) -- (0.095*5*9,0.095*5*10);

\filldraw[black] (0.095*5*5,0.095*5*5) circle (1.5pt);
\filldraw[black] (0.095*5*5,0.095*5*6) circle (1.5pt);
\draw[fill=white] (0.095*5*1,0.095*5*1) circle (1.5pt);
\draw[fill=white] (0.095*5*1,0.095*5*2) circle (1.5pt);
\draw[fill=white] (0.095*5*2,0.095*5*2) circle (1.5pt);
\draw[fill=white] (0.095*5*2,0.095*5*3) circle (1.5pt);
\draw[fill=white] (0.095*5*3,0.095*5*3) circle (1.5pt);
\draw[fill=white] (0.095*5*4,0.095*5*5) circle (1.5pt);
\draw[fill=white] (0.095*5*6,0.095*5*6) circle (1.5pt);
\draw[fill=white] (0.095*5*6,0.095*5*7) circle (1.5pt);
\draw[fill=white] (0.095*5*7,0.095*5*7) circle (1.5pt);
\draw[fill=white] (0.095*5*7,0.095*5*8) circle (1.5pt);
\draw[fill=white] (0.095*5*8,0.095*5*8) circle (1.5pt);
\draw[fill=white] (0.095*5*8,0.095*5*9) circle (1.5pt);
\draw[fill=white] (0.095*5*9,0.095*5*9) circle (1.5pt);
  \end{tikzpicture}
  \caption{Schematic sketch: Initial values for $F^\eps$  are taken from~\eqref{eq:init1l}--\eqref{eq:init2l} and indicated in blue. Initial values for $G$ according to~\eqref{eq:initG1}--\eqref{eq:initG2} are indicated as black dots and the resulting initial value $\alpha^\eps(\epsss,\epsss)$ by~\eqref{eq:initalpha} is marked in red. These initial values allow to obtain all remaining values for $\alpha(\cdot,\epsss)$ (dashed light red) and thus the necessary values for $G$ on the initial 'zig-zag' (white dots).}\label{fig:initial}
  \end{figure}

From this CR-mapping $G_{m,n}$ we construct a discrete minimal surface $\F_{m,n}$ for a suitably chosen starting point $\F_{0,0}$ from discrete integration of~\eqref{eq:discreteWeierstrass1}--\eqref{eq:discreteWeierstrass2}. As $G_{m,n}$ approximates the smooth function $g$ locally in $C^\infty$ with error of order $\eps^2$, we deduce
\begin{align*}
 \frac{\F_{m+1,n}-\F_{m,n}}{p_{m+1,n}-p_{m,n}}&= 
\Re\left[\frac{1}{g'}\rho(g)\right] +\Or(\eps^2)= \F_x+\Or(\eps^2), \\
\frac{\F_{m,n+1}-\F_{m,n}}{-i(p_{m,n+1}-p_{m,n})}&= 
-\Im\left[\frac{1}{g'}\rho(g)\right] +\Or(\eps^2)= \F_y+\Or(\eps^2) ,
\end{align*}
where we used the smooth Weierstrass representation~\eqref{eq:Weierstrass} and~\eqref{eq:defsigma}. 

This shows that the discrete minimal surface $\F_{m,n}$ locally approximates the smooth minimal surface $\F$ with an error of order~$\eps^2$ and thus proves Theorem~\ref{theo:MinimalConv}.

\begin{rmk}
 Alternatively to~\eqref{eq:initG2}, we could take as initial value
 \begin{align*}
G_{0,1}&
:= g(\phi(0))+i\eps \dot v(0) g'(\phi(0)) + \frac{1}{2}i^2\eps^2\dot v(0)^2 g''(\phi(0)) + \frac{1}{2}i\eps^2\ddot v(0) g'(\phi(0))
\end{align*}
\end{rmk}

\begin{rmk}
 For the initial values for $F^\eps$ in~\eqref{eq:init1l}--\eqref{eq:init2l} we have used derivatives of the given Björling data. Instead, we could use other choices as long as $\| 
F_0^\eps\|_{\mu_0}$ and $\|F_1^\eps\|_{\mu_0}$ are uniformly bounded in $\eps$ as well as $\|W_0^\eps\|_{3r}$ and $\|W_1^\eps\|_{3r}$ have bounds of order $\eps^2$ or $\eps$. In the latter case, we obtain an approximation error of order $\eps$.

A different choice for the initial values for $F^\eps$ may be based on knowing the exact values of the function $g$ in a (small) neighborhood of the curve $\gamma$:
\begin{align*}
F^\eps(t,0)&= \frac{1}{\eps} \left(\log\frac{g(u(t-\epss)+iv(t+\epss)) -g(u(t-\epss)+iv(t-\epss))}{i(v(t+\epss)-v(t-\epss))} \right. \\
&\qquad\quad \left.- \log\frac{g(u(t-\epss)+iv(t+\epss)) -g(u(t+\epss)+iv(t+\epss))}{u(t-\epss)-u(t+\epss)}\right), \\[0.5ex]
F^\eps(t,\epss)&= \frac{1}{\eps} \left(\log\frac{g(u(t)+iv(t+\eps)) -g(u(t)+iv(t))}{i(v(t+\eps)-v(t))} \right. \\
&\qquad\quad \left.- \log\frac{g(u(t-\eps)+iv(t)) -g(u(t)+iv(t))}{u(t)-u(t-\eps)}\right).
\end{align*}
In this case, it can easily be checked using a computer algebra program that the approximation error is of order $\eps$.
\end{rmk}

\begin{rmk}
 For the local approximation of the smooth function $g$ we use discrete holomorphic maps based on cross-ratio preservation (see Definition~\ref{defCRmap}), because the discrete minimal surfaces defined 
 in~\eqref{eq:discreteWeierstrass1}--\eqref{eq:discreteWeierstrass2} rely on CR-mappings. An analogous claim to Theorem~\ref{theo:ConvG} for discrete holomorphic maps based on the linear theory (see for example~\cite[Chap.~7]{BS08} and references therein), that is
 \begin{align*}
 \frac{f_{m,n+1}-f_{m-1,n}}{f_{m-1,n+1}-f_{m,n}} =\frac{p_{m,n+1}-p_{m-1,n}}{p_{m-1,n+1}-p_{m,n}},
 \end{align*}
 may be proved along the same lines as detailed in the previous sections.
\end{rmk}

\subsection{Cauchy data for $G_{m,n}$ obtained from $\No_0$, $\phi$ and $f$ and proofs of Theorems~\ref{theo:ConvG} and~\ref{theo:MinimalConv}}
Considering the construction procedure in Section~\ref{secConstruction}, observe that we only partially use the same initial values for $F^\eps$ as in Theorem~\ref{theoConvF}. The initial data for $\eta=0$ coincides with~\eqref{eq:init1}, but from our definitions for $G_{m,m}$ and $G_{m,m+1}$ and our ansatz in Section~\ref{secDeriv} we obtain further initial values for $F^\eps$ for $\eta=\epss$ which differ from~\eqref{eq:init2}. A Taylor approximation shows that these values are also suitable to apply Lemma~\ref{lemAbsBound} and thus deduce convergence analogously as in Section~\ref{secApprox} and~\ref{secConvF}. Therefore, we can use the values for $G_{m,n}$ defined in~\eqref{eq:defGm1}--\eqref{eq:defGm2} as our initial 'zig-zag'-curve. Evolution by~\eqref{eq:defdiscconf} produces a CR-mapping $G_{m,n}$ in a neighborhood of $G(t_0)$. 
This finishes the proof of Theorem~\ref{theo:ConvG}.

\section*{Acknowledgement}
This research was supported by the Deutsche Forschungsgemeinschaft (DFG --- German Research Foundation) --- Project-ID 195170736 --- TRR109 “Discretization in Geometry and Dynamics”.

\bibliographystyle{plain}
\bibliography{bjoerling}

\end{document}